\newenvironment{pf*}[1]{\proof[#1]}{\endproof}
\newtheorem{Theorem}[equation]{Theorem}
\newtheorem{Corollary}[equation]{Corollary}
\newtheorem{Lemma}[equation]{Lemma}
\newtheorem{Proposition}[equation]{Proposition}
\theoremstyle{definition}
\newtheorem{Definition}[equation]{Definition}
\newtheorem{Example}[equation]{Example}
\newtheorem{Conjecture}[equation]{Conjecture}
\theoremstyle{remark}
\newtheorem{Question}[equation]{Question}
\newtheorem{Remark}[equation]{Remark}
\numberwithin{equation}{section}
\newcommand{\Z}{{\mathbb Z}}
\newcommand{\Q}{{\mathbb Q}}
\newcommand{\R}{{\mathbb R}}
\newcommand{\N}{{\mathbb N}}
\newcommand{\spinc}{$\mathrm{Spin}^c$}
\newcommand{\mf}[1]{\mathfrak{#1}}
\newcommand{\mb}[1]{\mathbb{#1}}
\newcommand{\lf}{\left\lceil}
\newcommand{\rf}{\right\rceil}
\title{Calculating Heegaard-Floer Homology by Counting Lattice Points in Tetrahedra}
\author[1]{Mahir Bilen Can}
\author[2]{\c{C}a\u gr\i \; Karakurt}
\affil[1]{mcan@tulane.edu, Tulane and Yale Universities}
\affil[2]{karakurt@math.utexas.edu, University of Texas, Austin}
\date{\today}
\begin{document}

\maketitle

\begin{abstract}
We introduce a notion of complexity for Sefiert homology spheres by establishing a correspondence 
between lattice point counting in tethrahedra and the Heegaard-Floer homology.
This complexity turns out to be equivalent to a version of Casson invariant and it is 
monotone under a natural partial order on the set of Seifert homology spheres. 
Using this interpretation we prove that there are finitely many Seifert homology spheres with a 
prescribed Heegaard-Floer homology. 
As an application, we characterize L-spaces and weakly elliptic manifolds among Seifert homology spheres.  
Also, we list all the Seifert homology spheres up to complexity two.
\end{abstract}

{\em Keywords: Seifert homology sphere, Heegaard-Floer homology, rational and weakly elliptic singularities, 
L--space, numerical semigroup, tetrahedron}

{\em MSC-2010: 57R58, 57M27, 14J17, 05A15}

\section{Introduction}

Heegaard-Floer homology, introduced by Ozsv\'ath and Szab\'o in \cite{OS4} and \cite{OS5}, 
is a prominent invariant for $3$-manifolds. 
The goal of our article is to explore Heegaard-Floer homology from a combinatorial point of view in the special case of 
Seifert fibered homology spheres. 
Although it is more geometric than similar theories, such as Donaldson, or Seiberg-Witten theories, the definition of 
Heegaard-Floer homology involves a count of a certain moduli 
space of holomorphic disks into a symmetric product of a surface, which is in general a challenging analytical problem.  
On the other hand for a certain class of manifolds, 
namely plumbed manifolds with at most one bad vertex, works of Ozsv\'ath and Szab\'o \cite{OS1}, and Nemethi \cite{N} 
show that the calculation of Heegaard-Floer homology is a purely combinatorial  problem. 
This class of manifolds is relatively small, but it is still large enough to include all 
Seifert fibered spaces (over $S^2$).

In \cite{N}, for a fixed plumbed $3$-manifold, Nemethi finds an explicit algorithm whose output determines 
the Heegaard-Floer homology completely. 
An alternative algorithm is described in \cite{OS1} by Ozsv\'ath and Szab\'o.
However, computing Heegaard-Floer homology for  infinite families of $3$-manifolds 
seems to be a formidable combinatorial problem for one has to determine all the local maxima and 
minima of  infinite families of sequences which simultaneously solve an infinite family of non-homogeneous 
recurrence relations. See \cite{BN}, \cite{N2}, and \cite{Tw} for some particular cases where this problem is handled.

To elaborate on the problem mentioned in the previous paragraph, let us briefly review Nemethi's method 
in the simplest case, where the $3$-manifold is a Seifert homology sphere.  
To this end, let $p_1,\dots,p_l$ be a list of pairwise relatively prime integers such that $1<p_1<p_2<\dots<p_l$. 
We denote by $\varSigma(p_1,p_2,\dots,p_l)$ the Seifert fibered 3-manifold over $S^2$ with $l$ singular fibers 
whose Seifert invariants are given by $(e_0,(p_1' ,p_1),(p_2',p_2),\dots,(p_l',p_l))$. 
Hence, $(x_0,x_1,\dots,x_l)=(e_0,p_1',p_2',\dots,p_l')$ is the unique solution to the Diophantine equation
\begin{equation}\label{e:brieskorn}
x_0p_1p_2\cdots p_l+x_1 p_2\cdots p_l+p_1x_2\cdots p_l+\cdots+p_1p_2\cdots x_l=-1,
\end{equation}
where $1\leq x_i \leq p_i-1$, for  $i=1,2,\dots,l$. 
Equation (\ref{e:brieskorn}) guarantees that  $\varSigma(p_1,p_2,\dots,p_l)$ has  trivial first homology, so it is an integral 
homology sphere. 
To calculate Heegaard-Floer homology of $\varSigma(p_1,p_2,\dots,p_l)$, 
we consider the sequence $\tau :\mathbb{N}\to \mathbb{Z}$ defined by the recurrence 
\begin{equation}\label{eq:rectau}
\tau (n+1)=\tau (n) + 1 + |e_0|n - \sum_{i=1}^{l}\left \lceil \frac{np_i'}{p_i}  \right \rceil 
\end{equation}
with the given initial condition $\tau (0)=0$. Here $\lceil y \rceil$ represents the minimum integer larger than $y$. 
We say that $\tau(n_0)$ is a \emph{local maximum} of $\tau$, if there exist integers $a,b$ such that $a<n_0<b$ with 
$\tau(a)<\tau (n_0) >\tau (b)$, and $\tau$ is monotone increasing on the interval $[a,n_0]$ and monotone decreasing on $[n_0,b]$. 
\emph{Local minimum} values of $\tau$ are defined similarly. 
It turns out that, up  to a degree shift, the Heegaard-Floer homology is determined by the subsequence $\tau'$ 
of $\tau$ consisting of all local minima and local maxima .

In our first  result we analyze the difference term in (\ref{eq:rectau}) in order to understand the local extrema of $\tau$ . 
For notational convenience we focus our attention to Brieskorn spheres, which are by definition the 
Seifert homology spheres with three singular fibers ($l=3$). Nevertheless, most of our arguments are adaptable for studying
arbitrary number of singular fibers with some notational changes. See Theorem \ref{theo:mainmore}.

\begin{Theorem}\label{theo:main}
Let $(p,q,r)$ be a  triple of pairwise relatively prime  integers with $1<p<q<r$.  
Define $\Delta: \mathbb{N}\to \mathbb{Z}$
$$
\Delta (n)= 1 + |e_0|n - \left \lceil \frac{np'}{p} \right \rceil - \left \lceil \frac{nq'}{q} \right \rceil - \left \lceil \frac{nr'}{r} \right \rceil,
$$
where $(e_0,p',q',r')$ is defined by
$$
e_0pqr+p'qr+pq'r+pqr'=-1
$$
with $0\leq p'\leq p-1$, $0\leq q'\leq q-1$, $0\leq r'\leq r-1$. Define the constant
$$
N_0=pqr-pq-qr-pr.
$$
Suppose $(p,q,r) \neq  (2,3,5)$. Then the following holds.
\begin{enumerate}
	\item $N_0$ is a positive integer.
	\item $\Delta (n) \geq 0 $,  for all $n>N_0$.
	\item $\Delta(n)=-\Delta(N_0-n)$,  for all $n$ with $0\leq n\leq N_0$.
	\item  $\Delta(n)\in \{-1,0,1\}$, for all $n$ with $0\leq n \leq N_0$.
	\item For  $0\leq n \leq N_0$, one has $\Delta (n)=1$ if and only if 
	$n$ is an element of the numerical semigroup $G(pq,pr,qr)$ minimally generated by $pq$, $qr$, and $pr$.
	(We consider 0 as an element of the semigroup, hence it is always true that $\Delta(0)=1$.)
\end{enumerate}
If $(p,q,r) =  (2,3,5)$, then $\Delta(n) \geq 0$ for all $n\in \mathbb{N}$.
\end{Theorem}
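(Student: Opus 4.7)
The plan is to rewrite $\Delta(n)$ in a closed form from which all five items drop out almost algebraically. Dividing the Diophantine equation by $pqr$ yields
\begin{equation*}
|e_0| \;=\; \frac{p'}{p} + \frac{q'}{q} + \frac{r'}{r} + \frac{1}{pqr}.
\end{equation*}
Set $\alpha_n := (-np') \bmod p \in \{0,1,\dots,p-1\}$, so that $\lceil np'/p \rceil = (np' + \alpha_n)/p$, and define $\beta_n \in [0,q)$, $\gamma_n \in [0,r)$ analogously. Substituting into the definition of $\Delta(n)$ and simplifying (the $np'/p$, $nq'/q$, $nr'/r$ terms cancel) produces
\begin{equation*}
\Delta(n) \;=\; \frac{pqr + n - \alpha_n\, qr - \beta_n\, pr - \gamma_n\, pq}{pqr}.
\end{equation*}
Using the Diophantine equation once more, in the form $-p'qr \equiv 1 \pmod{p}$ and its analogues mod $q$ and $r$, one checks that $\alpha_n\, qr + \beta_n\, pr + \gamma_n\, pq \equiv n \pmod{pqr}$, and in fact $(\alpha_n, \beta_n, \gamma_n)$ is the unique triple in the box $[0,p) \times [0,q) \times [0,r)$ with this property. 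Establishing this reformulation is the main technical step; everything else reduces to tracking the integer $X_n := \alpha_n\, qr + \beta_n\, pr + \gamma_n\, pq$.

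Observe that $X_n \in [0,\,3pqr - pq - pr - qr] = [0,\,2pqr + N_0]$, that $X_n \equiv n \pmod{pqr}$, and that $\Delta(n) = 1 - (X_n - n)/pqr$. Part~(1) is then elementary: $N_0 > 0$ is equivalent to $1/p + 1/q + 1/r < 1$, and among pairwise coprime triples with $1 < p < q < r$ this fails only for $(p,q,r) = (2,3,5)$. Part~(2) is immediate from the upper bound on $X_n$: if $\Delta(n) \leq -1$ then $X_n \geq n + 2pqr$, combined with $X_n \leq 2pqr + N_0$ this forces $n \leq N_0$, so $n > N_0 \Rightarrow \Delta(n) \geq 0$. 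Part~(4) is analogous: on the range $[0,N_0] \subset [0,pqr)$ the bounds on $X_n$ pin the integer $(X_n - n)/pqr$ to $\{0,1,2\}$, giving $\Delta(n) \in \{-1,0,1\}$.

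Part~(3) reduces to verifying a symmetry of the box coordinates: since $N_0 \equiv -qr \pmod{p}$, the congruence characterisation forces $\alpha_{N_0 - n} = p - 1 - \alpha_n$, and analogously for $\beta$ and $\gamma$; summing these contributions gives $X_{N_0 - n} = (2pqr + N_0) - X_n$, from which $\Delta(N_0 - n) = -\Delta(n)$ falls out of one line of algebra. For part~(5), the equivalence $\Delta(n) = 1 \iff X_n = n$ already exhibits $n$ as $\alpha_n\, qr + \beta_n\, pr + \gamma_n\, pq$ with nonnegative coefficients, so $n \in G(pq,pr,qr)$. Conversely, any semigroup representation $n = a\, qr + b\, pr + c\, pq$ with $a,b,c \geq 0$ can be brought into the box by iteratively reducing an out-of-range coordinate at the cost of subtracting $pqr$ from the sum, producing $X_n = n - K\cdot pqr$ for some $K \geq 0$; hence $\Delta(n) \geq 1$, and Part~(4) forces equality throughout $[0, N_0]$. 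Finally, in the exceptional case $(p,q,r) = (2,3,5)$ one has $N_0 = -1$, so $X_n \leq 2pqr - 1 < 2pqr$ forces $(X_n - n)/pqr \leq 1$ for every $n \geq 0$, giving $\Delta(n) \geq 0$ throughout $\mathbb{N}$.
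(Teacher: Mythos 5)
Your proof is correct, and for items (1)--(4) it is essentially the paper's own argument in different notation: your identity $\Delta(n) = 1 - (X_n - n)/pqr$ with $X_n = \alpha_n qr + \beta_n pr + \gamma_n pq$ is exactly the paper's quasi-polynomial form $\Delta(n) = 1 + \frac{n}{pqr} - \bigl( f(np'/p) + f(nq'/q) + f(nr'/r)\bigr)$, since $f(np'/p) = \alpha_n/p$ and so on; your bound $X_n \le 2pqr + N_0$ is their bound on the sum of the $f$-terms, and your complementary-residue identity $\alpha_{N_0-n} = p-1-\alpha_n$ is the content of their small lemma $\lceil (1-a)/p \rceil + \lceil a/p \rceil = 1$ used in Lemma \ref{lemm:symm}. (For item (1) you assert, rather than verify, that $(2,3,5)$ is the only admissible triple with $1/p+1/q+1/r \ge 1$; the paper does this via monotonicity of $N_0$ together with the minimal successors $(2,3,7)$ and $(3,4,5)$ --- either way it is a routine finite check, so this is not a gap.) Where you genuinely depart from the paper is the ``only if'' half of item (5): the paper shows that every $n \in [0,N_0]$ with $\Delta(n)=1$ lies in $G(pq,pr,qr)$ indirectly, by a counting argument --- the symmetry of item (3) makes the number of $+1$'s on $[0,N_0]$ equal to the number of $-1$'s, and Theorem \ref{theo:lattice} identifies the number of $-1$'s with the cardinality of $G\cap[0,N_0]$ --- whereas you read membership off directly: $\Delta(n)=1$ forces $X_n = n$, which is itself a nonnegative integer combination of $qr$, $pr$, $pq$. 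Combined with your box-reduction argument for the ``if'' direction (which matches the paper's direct argument), this makes item (5) self-contained, constructive (it exhibits an explicit representation of $n$), and independent of the lattice-point count; the paper's detour through Theorem \ref{theo:lattice} costs nothing extra for the authors only because they need that count anyway for the $\kappa$-invariant.
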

As we justify later, the above theorem provides us with a fast and  practical means for calculation of the Heegaard-Floer homology 
of a Brieskorn sphere. 
More importantly it gives a partial answer to the realization problem which we explain now. 

Let $U$ be a formal variable and $Y$ be a closed, oriented $3$-manifold. The Heegaard-Floer homology
of $Y$ is a $\Z_2$-graded $\Z[U]$-module with a decomposition 
$HF^+ (Y) \simeq \oplus_{\mf{s}} HF^+(Y,\mathfrak{s})$ into $\Z_2$-graded $\Z[U]$-submodules 
indexed by the \spinc structures on $Y$.
In the case of integral homology spheres the decomposition simplifies; 
there is unique \spinc structure, and the $\Z_2$ grading lifts to a $\Z$-grading
such that $U$ has degree $-2$. 
Therefore, the following question becomes natural:
\begin{Question}\label{q:which}
Which $\mathbb{Z}$-graded $\mathbb{Z}[U]$-module can be realized as the 
Heegaard-Floer homology of a Seifert homology sphere $Y$?
\end{Question}

It is known for integral homology spheres that $HF^+$ decomposes into $\Z[U]$-submodules as follows
$$
HF^+(Y) \simeq \mathcal{T}^+_{(d)}\oplus HF_{\mathrm{red}}(Y),
$$ 
where $\mathcal{T}^+_{(d)}$ is a copy of $\mathbb{Z}[U,U^{-1}]/U\cdot \mathbb{Z}[U]$ on which we impose a 
grading so that the minimal degree is $d$. 
Furthermore, $HF_{\mathrm{red}}(Y)$ is finitely generated.
If $Y$ is a Seifert homology sphere oriented so that it bounds a positive definite plumbing, then $HF^+(Y)$ is 
supported at even degrees only.

Let us illustrate how Theorem \ref{theo:main} is useful in the processes of solving Question \ref{q:which}. 
Recall that a $3$-manifold $Y$ is said to be an {\em $L$-space}, 
if it is a rational homology sphere, and $HF_\mathrm{red}(Y,\mathfrak{s})=0$ for every \spinc structure $\mathfrak{s}$ , \cite{OS2}.  

\begin{Conjecture}\label{conj:lspace}
If an irreducible integral homology sphere $Y$ is an $L$-space, then $Y$ is either the $3$-sphere, or the 
Poincar\'e homology sphere $\varSigma (2,3,5)$ with either orientation.
\end{Conjecture}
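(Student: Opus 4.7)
The plan is to stratify irreducible integer homology spheres via geometrization and handle each geometric class separately. By Perelman's theorem, an irreducible integer homology sphere $Y$ is either (i) Seifert fibered, (ii) hyperbolic, or (iii) a toroidal graph manifold that is not Seifert. Each class calls for a different tool, and I expect the hyperbolic class to be where the argument genuinely breaks down.

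For case (i), I would feed Theorem \ref{theo:main} into Nemethi's algorithm. A Seifert integer homology sphere with at most two singular fibers must be $S^3$. For three singular fibers with $(p,q,r)\neq(2,3,5)$, parts (3) and (5) together imply $\Delta(N_0)=-\Delta(0)=-1$, since $0\in G(pq,pr,qr)$ forces $\Delta(0)=1$. Combined with part (2), which gives $\Delta\geq 0$ beyond $N_0$, and the fact that $\sum_{n=0}^{N_0}\Delta(n)=0$ by antisymmetry, one concludes $\tau(0)=\tau(N_0+1)=0$ while $\tau$ ascends at $n=0$ and descends at $n=N_0$. This forces an interior local maximum, and Nemethi's identification of the local extrema of $\tau$ with $HF^+$ yields $HF_\text{red}(Y)\neq 0$, so $Y$ is not an L-space. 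The generalization Theorem \ref{theo:mainmore} handles $l\geq 4$ singular fibers by an identical sign-change argument.

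For case (iii), I would invoke the Hanselman--Rasmussen--Rasmussen--Watson theorem, which establishes the Boyer--Gordon--Watson L-space conjecture for graph manifolds: a graph-manifold rational homology sphere is an L-space if and only if its fundamental group is not left-orderable. For an irreducible integer homology sphere graph manifold outside the Seifert case, the Bass--Serre tree of its JSJ decomposition produces a left ordering on $\pi_1(Y)$ by assembling orderings on the Seifert pieces compatibly along torus boundaries; hence no such $Y$ is an L-space. Alternatively one can pursue a direct Floer-theoretic proof via a splitting formula along the JSJ tori, showing each torus contributes a nontrivial class to $HF_\text{red}$.

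Case (ii), the hyperbolic case, is the main obstacle and coincides with a well-known open problem in the subject. A plausible route is to construct a taut, coorientable foliation on every hyperbolic integer homology sphere by using Agol's virtual fibering theorem and averaging a fibered foliation over a finite cover, then to invoke Ozsv\'ath--Szab\'o's theorem that such a foliation forces $HF_\text{red}\neq 0$. Equivalently one could aim to prove left-orderability of $\pi_1(Y)$ directly and appeal to Boyer--Gordon--Watson. No currently available technique produces taut foliations uniformly on all hyperbolic $\mathbb{Z}$-homology spheres, and left-orderability of these groups remains widely open, so this step is the genuine obstruction around which any complete proof of the conjecture must pivot.
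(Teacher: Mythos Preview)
The statement you are addressing is labelled a \emph{Conjecture} in the paper, and the paper does not prove it. What the paper actually proves is the special case of Seifert homology spheres (Theorem~\ref{theo:lspace}), and your case~(i) argument is essentially identical to the paper's proof there: use the antisymmetry and semigroup description of $\Delta$ from Theorems~\ref{theo:main} and~\ref{theo:mainmore} to force $\Delta(0)=1$, $\Delta(N_0)=-1$, hence $\tau$ is not monotone, hence (via Proposition~\ref{p:lspace}) $HF_{\mathrm{red}}\neq 0$. So on the part where there is something to compare, you match the paper.

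Your proposal is honest that the hyperbolic case~(ii) is the genuine open problem, and that is correct; this is exactly why the paper states the assertion as a conjecture. One concrete gap worth flagging in your case breakdown: geometrization does \emph{not} give the trichotomy you wrote. An irreducible integer homology sphere is Seifert, hyperbolic, or toroidal, but a toroidal manifold need not be a graph manifold --- its JSJ pieces may be hyperbolic (``mixed'' manifolds). Your case~(iii) and the Hanselman--Rasmussen--Watson / Boyer--Clay machinery you invoke cover only graph manifolds, so the mixed case is missing from your outline. In practice this is subsumed by the same obstruction as case~(ii), since any argument would have to control the hyperbolic pieces, but the stratification as written is incomplete. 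Also, the Bass--Serre claim in~(iii) that one can always assemble a left order on $\pi_1$ from orderings of Seifert pieces is not automatic and is itself the substance of the Boyer--Clay work; it should be cited rather than asserted as an elementary construction.
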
 
This conjecture is verified for Seifert homology spheres independently by Rustamov \cite{R} and Eftekhary  \cite{E}. 
There is also an implicit proof of the same statement when one combines the results of \cite{BRW} with \cite{LS}. 
Here we give an alternative, elementary proof.
 
\begin{Theorem}\label{theo:lspace}(\cite{R}, \cite{E})
If a Seifert homology sphere $Y$ is an $L$--space, then $Y$ is homeomorphic to $S^3$ or $\pm\varSigma(2,3,5)$.
\end{Theorem}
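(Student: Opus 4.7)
The plan is to exploit the characterization of $L$-spaces in terms of the monotonicity of the sequence $\tau$. By the graded-root formulas of Ozsv\'ath-Szab\'o and Nemethi for Seifert fibered homology spheres, the group $HF_{\mathrm{red}}(Y)$ vanishes precisely when the sequence $\tau$ of (\ref{eq:rectau}) is monotone non-decreasing, i.e.\ $\tau(n+1)\geq \tau(n)$ for every $n\in \N$. Since the L-space property is invariant under orientation reversal, it is enough to work with the orientation used in (\ref{eq:rectau}). Moreover $\varSigma(p_1)$ and $\varSigma(p_1,p_2)$ are both homeomorphic to $S^3$, so we may assume $Y=\varSigma(p_1,\ldots,p_l)$ with $l\geq 3$.

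Assume first that $l=3$ and write $Y=\varSigma(p,q,r)$. The recurrence (\ref{eq:rectau}) specialises to $\tau(n+1)-\tau(n)=\Delta(n)$, so $Y$ is an $L$-space precisely when $\Delta(n)\geq 0$ for every $n\in\N$. Theorem \ref{theo:main} makes this condition completely transparent. If $(p,q,r)=(2,3,5)$, the final clause of the theorem gives $\Delta\geq 0$ outright, confirming that the Poincar\'e sphere is an $L$-space. Otherwise, part (5) yields $\Delta(0)=1$ because $0\in G(pq,pr,qr)$, the antisymmetry in part (3) forces $\Delta(N_0)=-\Delta(0)=-1$, and part (2) ensures $\Delta(n)\geq 0$ for every $n>N_0$. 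Hence $\tau$ strictly decreases between $n=N_0$ and $n=N_0+1$ and then grows without bound, so it fails to be non-decreasing. Consequently $HF_{\mathrm{red}}(Y)\neq 0$ and $Y$ is not an $L$-space.

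For $l\geq 4$ one invokes the analogous analysis of the difference sequence recorded later as Theorem \ref{theo:mainmore}: the three key properties (antisymmetry about a positive constant $N_0$, eventual non-negativity, and $\Delta(0)=1$) all survive. Since no $(2,3,5)$-type exception is available among four or more pairwise coprime integers greater than one, the same local-minimum argument precludes the L-space property in this range. The main obstacle is the input from Nemethi's machinery that translates the vanishing of $HF_{\mathrm{red}}$ into the monotonicity of $\tau$; once that dictionary is in place, the arithmetic packaged in Theorem \ref{theo:main} reduces the classification to an elementary inspection of $\Delta$.
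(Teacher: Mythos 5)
Your proposal is correct and follows essentially the same route as the paper: it translates the $L$-space condition into monotonicity of $\tau$ (the paper's Proposition \ref{p:lspace}, via Nemethi's machinery), and then uses $\Delta(0)=1$, the antisymmetry $\Delta(N_0)=-\Delta(0)=-1$, and the $(2,3,5)$ exception from Theorem \ref{theo:main} (respectively Theorem \ref{theo:mainmore} for $l\geq 4$) to rule out all other Seifert homology spheres.
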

Among Seifert manifolds, $L$-spaces are precisely those 3-manifolds with $\Delta(n) \geq 0$ for all $n\geq \N$. 
Therefore, Theorem \ref{theo:main} is sufficient to prove Theorem \ref{theo:lspace} in the case of three singular fibers. 
The case of arbitrarily many singular fibers follows from an extension of our theorem to that setting.

Above results suggest that the sum of negative values of the $\Delta$ function is a significant quantity for it defines a 
kind of ``complexity'' for the Heegaard-Floer homology. Indeed, what we observe above is that the complexity 
$0$ Seifert manifolds are precisely the $L$-spaces. Therefore, our next definition is meaningful.

\begin{Definition}\label{def:kappa}
Let $p_1,\dots,p_l$ be a list of relatively prime integers such that $1<p_1<\cdots < p_l$. 
For $\tau(n)$ as defined in (\ref{eq:rectau}),
put $\Delta(n)=\tau(n+1)-\tau(n)$. We define 
$$
\kappa(p_1,p_2,\dots,p_l)=\left | \sum_{i=0}^\infty \mathrm{min} \{0, \Delta(n) \} \right |.
$$
\end{Definition}

It follows from \cite{N} that if two Heegaard-Floer homology groups $HF^+(\varSigma (p_1,p_2,\dots,p_l))$ and 
$HF^+(\varSigma (q_1,q_2,\dots,q_l))$ are isomorphic, then the corresponding  \emph{kappa} invariants 
$\kappa(p_1,p_2,\dots,p_l)$ and $\kappa(q_1,q_2,\dots,q_l)$ are equal. 
The converse does not hold in general, however, as we show, there are only finitely many isomorphism types of 
$\mathbb{Z}[U]$-modules which might appear as the Heegaard-Floer homology of a Seifert homology sphere with a prescribed $\kappa$.

\begin{Theorem}\label{theo:finite}
For any positive integer $k$, there exists finitely many tuples $(p_1,p_2,\dots,p_l)$ such that $\kappa (p_1,p_2,\dots,p_l)=k$, 
where $p_1,p_2,\dots,p_l$ are pairwise relatively prime, and $1<p_1<p_2<\dots<p_l$. 
Consequently, there exist at most finitely many Seifert homology spheres with prescribed Heegaard-Floer homology.
\end{Theorem}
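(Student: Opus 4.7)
The plan is to translate the constraint $\kappa(p_1,\dots,p_l)\leq k$ into a Diophantine inequality on the tuple via the numerical semigroup interpretation, and then to bound first $l$ and afterward each $p_i$.

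By parts (3)--(5) of Theorem \ref{theo:main}, when $l=3$ one has $\kappa(p,q,r)=|G(pq,pr,qr)\cap[0,N_0]|$, which is at least $\lfloor N_0/(pq)\rfloor+1$ because $G$ contains every nonnegative multiple of its smallest generator $pq$. Substituting $N_0=pqr(1-\tfrac{1}{p}-\tfrac{1}{q}-\tfrac{1}{r})$ and rearranging, the assumption $\kappa(p,q,r)\leq k$ becomes
$$1 \;<\; \frac{k+1}{r}+\frac{1}{p}+\frac{1}{q}.$$
A short case analysis then suffices: for $p=2,q=3$ one gets $r<6(k+1)$; for $p=2,q\geq 5$ (pairwise coprimality forces $q$ odd) one has $1/p+1/q\leq 7/10$, giving $r<10(k+1)/3$; for $p\geq 3$ one has $1/p+1/q\leq 7/12$, giving $r<12(k+1)/5$. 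In each range, $p<q<r$ are then automatically bounded.

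For arbitrary $l$, Theorem \ref{theo:mainmore} provides the analogous semigroup description, with generators $a_i:=\prod_{j\neq i}p_j$ and threshold $N_0=(\prod_i p_i)(l-2-\sum_i 1/p_i)$. The same lower bound $\kappa\geq\lfloor N_0/a_i\rfloor+1$ gives $p_i(l-2-s)<k$ for each $i$, where $s:=\sum_j 1/p_j$. Choosing $i=1$ and using $1/p_1\leq s$ yields $s>(l-2)/(k+1)$, while the estimate $p_j\geq j+1$ gives the upper bound $s\leq \sum_{j=1}^{l}1/(j+1)$, which grows only logarithmically in $l$. Hence $l$ is bounded by an explicit function of $k$. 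With $l$ fixed, I would then bound the $p_i$ inductively: from $\sum_{j\geq i}1/p_j\leq(l-i+1)/p_i$ the inequality $p_i(l-2-s)<k$ rearranges to $p_i\,C_i<k+(l-i+1)$, where $C_i:=l-2-\sum_{j<i}1/p_j$; since $C_1=l-2\geq 1>0$ and each subsequent $C_i>0$ once $p_1,\dots,p_{i-1}$ are fixed (positivity coming from $N_0>0$, via Theorem \ref{theo:main}(1) and its extension), the recursion bounds $p_1$ in terms of $l$ alone, then $p_2$ in terms of $(p_1,l)$, and so on.

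The main obstacle is verifying that Theorem \ref{theo:mainmore} yields the semigroup lower bound $\kappa\geq\lfloor N_0/a_i\rfloor+1$ uniformly in $l$; the finitely many degenerate tuples where $N_0\leq 0$ (such as $(2,3,5)$) must be handled by direct inspection. Granting the first assertion, the concluding sentence is immediate: by Nemethi's algorithm \cite{N}, the isomorphism class of $HF^+(\varSigma(p_1,\dots,p_l))$ determines the subsequence of local extrema of $\tau$, and hence the value of $\kappa$, so a prescribed Heegaard-Floer homology fixes $\kappa$, which by the first part admits only finitely many realizing tuples, and therefore only finitely many Seifert homology spheres.
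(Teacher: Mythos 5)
Your argument is correct, but it takes a genuinely different route from the paper. The paper proves finiteness ``softly'': it argues that $\kappa$ cannot be constant on an infinite family, using the monotonicity statements already established (Proposition \ref{prop:finite} for triples under the partial order, Proposition \ref{prop:kappa2} for adding a singular fiber, together with the fact that the relevant tetrahedra grow without bound as $r_n\to\infty$ along a subsequence), and then invokes the fact from \cite{N} that isomorphic $HF^+$ forces equal $\kappa$. You instead extract an effective Diophantine bound: from $\kappa\geq\lfloor N_0/a_i\rfloor+1$ you bound $l$ by comparing the linear lower bound $s>(l-2)/(k+1)$ with the logarithmic upper bound $s\leq\sum_{j=1}^{l}1/(j+1)$, and then bound each $p_i$ inductively through the positivity of $C_i=l-2-\sum_{j<i}1/p_j$. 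The step you flag as the main obstacle is not actually a gap: for any $l\geq 3$ one has $\kappa=\sum_{n=0}^{N_0}\max\{0,\Delta(n)\}$ by parts (1) and (2) of Theorem \ref{theo:mainmore} (this identity is written out explicitly in the paper's proof of that theorem), and part (4) gives $\Delta(n)\geq 1$ on $(G\cup\{0\})\cap[0,N_0]$, which contains all multiples of $a_i=p_1\cdots p_l/p_i$ up to $N_0$; hence $\kappa\geq\lfloor N_0/a_i\rfloor+1$ uniformly in $l$, with the only degenerate cases $N_0\leq 0$ being $S^3$ and $\varSigma(2,3,5)$, both of $\kappa=0$. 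The trade-off is clear: the paper's argument is shorter given its monotonicity propositions but is purely qualitative, whereas yours yields explicit bounds on $l$ and on the $p_i$ in terms of $k$ (so in principle an algorithm to enumerate all tuples with $\kappa=k$, in the spirit of Theorem \ref{theo:list}); your closing deduction of the second assertion coincides with the paper's appeal to \cite{N}.
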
  
By contrast, we should mention that it is possible to find many infinite families of irreducible integral homology spheres with isomorphic 
Heegaard-Floer homology. See, for example, Proposition 1.2 of \cite{AK}.

The most important ingredient in the proof of Theorem \ref{theo:finite} is the monotonicity property of $\kappa$ with respect to a 
partial ordering on the set of tuples. We prove this property in propositions \ref{prop:finite}, \ref{prop:kappa2}, and \ref{prop:kappa3}. 
These results together with  sufficient computational power, allows one to list all the graded $\mathbb{Z}[U]$-modules that 
could appear as the Heegaard-Floer homology of a Seifert homology sphere up to a given complexity. 
In the following theorem, we give this list up to $\kappa=2$.

\begin{Theorem}\label{theo:list}
Table \ref{tab:kappa} contains the list of all graded $\mathbb{Z}[U]$-modules that are isomorphic to a Heegaard-Floer 
homology for some Seifert homology sphere with $\kappa \leq 2$. 
Additionally, for each such $\Z[U]$-module $M$, the table contains the list of all Seifert homology spheres whose Heegaard-Floer homology is $M$.
\begin{table}[htp]
\begin{center}
\label{tab:kappa}
\begin{tabular}{|c|c|c|c|}
\hline
Brieskorn Sphere $Y$ &$\kappa$ & $d(-Y)$ & $HF^+(-Y)$ \\
\hline
$S^3$ & $0$ & $0$ & $\mathcal{T}^+_{(0)}$\\
\hline
$\varSigma (2,3,5)$ & $0$ & $-2$ & $\mathcal{T}^+_{(-2)}$\\
\hline
$\varSigma (2,3,7)$ & $1$ & $0$ & $\mathcal{T}^+_{(0)}\oplus \mathbb{Z}_{(0)}$\\
\hline
$\varSigma (2,3,11)$ & $1$ & $-2$ & $\mathcal{T}^+_{(-2)}\oplus \mathbb{Z}_{(-2)}$\\
\hline
$\varSigma (2,3,13), \; \varSigma (2,5,7), \; \varSigma (3,4,5) \;$ & $2$ & $0$ & $\mathcal{T}^+_{(0)}\oplus \mathbb{Z}_{(0)}\oplus \mathbb{Z}_{(0)}$\\
\hline
$\varSigma (2,3,17), \; \varSigma (2,5,9)$ & $2$ & $-2$ & $\mathcal{T}^+_{(-2)}\oplus \mathbb{Z}_{(-2)}\oplus \mathbb{Z}_{(-2)}$\\
\hline
\end{tabular}
\caption{Seifert homology spheres with $\kappa \leq 2$.}
\label{tab:kappa}
\end{center}
\end{table}

\end{Theorem}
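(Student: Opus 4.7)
The plan is to combine the monotonicity of $\kappa$ with respect to the partial order on tuples together with a direct enumeration, and then to read off the $\Z[U]$-module structure from the sequence $\tau$ via Nemethi's algorithm.

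First I would bound the search. By Theorem \ref{theo:finite}, only finitely many tuples $(p_1,\dots,p_l)$ satisfy $\kappa \leq 2$, and the monotonicity results in Propositions \ref{prop:finite}, \ref{prop:kappa2}, and \ref{prop:kappa3} provide an explicit pruning rule: enlarging any $p_i$, or adjoining an additional fiber weight, can only raise $\kappa$. Together with a handful of small base-case computations, this forces $l=3$ throughout the list (moving to $l=4$ already exceeds $\kappa=2$) and bounds $p,q,r$ from above in terms of $\kappa$. I would then enumerate the remaining pairwise coprime triples $(p,q,r)$ with $1<p<q<r$ below these bounds and compute $\kappa(p,q,r)$ directly from Definition \ref{def:kappa}. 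Theorem \ref{theo:main}(4)--(5) makes the computation efficient: on the interval $[0,N_0]$ the function $\Delta$ takes values in $\{-1,0,1\}$, equals $1$ exactly on the numerical semigroup $G(pq,pr,qr)$, and by (3) its negative steps mirror its positive ones. Consequently $\kappa(p,q,r)$ reduces to a count of semigroup gaps in a bounded range and can be tabulated by hand.

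Second, for each surviving tuple I would compute $HF^+(-Y)$. By Nemethi's reconstruction theorem, the graded $\Z[U]$-module $HF^+(-Y)$ is determined, up to an overall even grading shift, by the subsequence $\tau'$ of local minima and maxima of $\tau$: the tower $\mathcal{T}^+_{(d)}$ carries the correction term $d=d(-Y)$, and each local-max/local-min pair contributes a summand to $HF_{\mathrm{red}}$ whose rank and grading are read off from $\tau'$. Because Theorem \ref{theo:main}(4) restricts $\Delta$ to $\{-1,0,1\}$, every descent of $\tau$ has height exactly one, so each such pair contributes a single free summand $\Z_{(d_i)}$. For the tuples in the table the correction term comes out to $0$ or $-2$, and the $\Z$-summands land in the same degree as the tower, producing exactly the modules displayed.

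The main obstacle is the first step: one needs the monotonicity bounds sharp enough that the enumeration is small and genuinely exhaustive, in particular ruling out all $l\geq 4$ tuples with $\kappa\leq 2$ and all triples with $(p,q,r)$ exceeding an explicit threshold. Once these effective bounds are in place, the remainder is a mechanical finite check that combines numerical-semigroup arithmetic with Nemethi's graded module reconstruction.
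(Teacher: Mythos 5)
Your plan is essentially the paper's proof: verify the table entries directly via Theorem \ref{theo:main}, Nemethi's algorithm and the degree-shift computation, and then use the monotonicity propositions to reduce completeness to finitely many minimal cases --- the paper makes your ``explicit threshold'' concrete by listing the immediate successors $(3,5,7)$, $(3,4,7)$, $(3,5,9)$, $(2,7,9)$, $(2,5,11)$, $(2,5,13)$, $(2,5,19)$, $(2,3,19)$ of the tabulated triples, checking each has $\kappa\geq 3$, and disposing of $l\geq 4$ via $\kappa(p_1,\dots,p_l)\geq\kappa(2,3,5,7)\geq\kappa(3,5,7)\geq 4$. One small caution: the fact that $\Delta\in\{-1,0,1\}$ does not by itself force every descent of $\tau$ to have height one (two consecutive values $-1$ would produce a $\mathcal{T}^1_{(d)}$ rather than two $\mathbb{Z}$ summands); for the triples in the table this holds because the elements of $G\cap[0,N_0]$ are isolated, so it must be confirmed case by case rather than deduced from Theorem \ref{theo:main}(4) alone.
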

Note that Theorem \ref{theo:lspace} is a special case of Theorem \ref{theo:list} for which $\kappa=0$.
We should note also that the $HF^+ (-Y)$ completely determines the Heegaard-Floer homology of positively oriented 
$Y$, so there is no loss of information in Theorem \ref{theo:list}.

Using a relation between the Euler characteristic of Heegaard-Floer homology and the Casson invariant, we 
relate $\kappa$ to other well known invariants of Seifert homology spheres. 
Recall that every Seifert fibered space is the boundary of a $4$-manifold that is plumbing of disk bundles over spheres, 
where the plumbing is done according to a negative definite star shaped weighted tree. 
Such a plumbing configuration is unique up to blow-up and blow-down. 
Suppose we fix one such plumbing and let $s$ denote the number of its vertices. Also, let $K$ denote its canonical cohomology class. 
Then the number $K^2+s$ is invariant under blow-up and blow-down, so it defines an invariant of the Seifert fibered space.

\begin{Proposition}\label{prop:kappa} 
For a Seifert homology sphere $Y=\varSigma (p_1,p_2,\dots, p_l)$,  the following equality holds
$$\kappa (p_1,p_2,\dots, p_l)= \lambda (-Y) - (K^2+s)/8,$$
\noindent where $\lambda(-Y)$ is the Casson invariant of $-Y$ \cite{AM}, normalized so that the 
Poincar\'e homology sphere $\varSigma(2,3,5)$ oriented as the boundary of negative 
$E_8$ plumbing satisfies $\lambda(-\varSigma (2,3,5))=-1$. 
\end{Proposition}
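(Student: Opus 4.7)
The plan is to express both sides of the identity in terms of two intermediate invariants --- the Euler characteristic $\chi(HF^+_{\mathrm{red}}(-Y))$ and the correction term $d(-Y)$ --- and then combine. Three inputs are required: (i) the equality $\kappa = \chi(HF^+_{\mathrm{red}}(-Y))$; (ii) the Ozsv\'ath--Szab\'o formula relating $\lambda$, $\chi(HF^+_{\mathrm{red}})$, and $d$; and (iii) the correction-term formula expressing $d(-Y)$ in terms of $K^2+s$. Once these three are in place, a short substitution yields the claimed identity.

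For (i), I would unpack N\'emethi's algorithm for the Heegaard-Floer homology of a plumbed manifold, recalled in the introduction. Because $-Y$ bounds a positive-definite plumbing, $HF^+(-Y)$ is supported in even degrees and decomposes as $\mathcal{T}^+_{(d(-Y))} \oplus HF^+_{\mathrm{red}}(-Y)$. N\'emethi's description shows that the reduced summand is built from the sequence of local extrema of $\tau$: each consecutive local-max/local-min pair $(M_i, m_i)$ with $M_i > m_i$ contributes $M_i - m_i$ generators to $HF^+_{\mathrm{red}}(-Y)$, distributed across even degrees. Therefore $\chi(HF^+_{\mathrm{red}}(-Y)) = \mathrm{rank}(HF^+_{\mathrm{red}}(-Y)) = \sum_i (M_i - m_i)$, and this last quantity is the total descent of the sequence $\tau$. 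Since $\Delta(n) = \tau(n+1) - \tau(n)$, the total descent equals $-\sum_n \min\{0, \Delta(n)\}$, which is $\kappa$ by definition.

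For (ii), I would quote the Ozsv\'ath--Szab\'o identity $\lambda(Y) = \chi(HF^+_{\mathrm{red}}(Y)) + \tfrac{1}{2}\, d(Y)$ for an integral homology sphere, and apply it to $-Y$. For (iii), I would invoke Ozsv\'ath and Szab\'o's correction-term inequality for boundaries of negative-definite $4$-manifolds, which by N\'emethi's work is realized as an equality $d(Y) = (K^2+s)/4$, hence $d(-Y) = -(K^2+s)/4$, for Seifert homology spheres, achieved by the characteristic class $K$ satisfying the adjunction relations $K \cdot v_i = -v_i^2 - 2$ on the plumbing graph. Substituting (i) and (iii) into (ii) and rearranging --- with careful attention to the sign conventions between the Casson invariant, the correction term, and the orientation of the plumbing --- produces the stated identity.

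The main obstacle is step (i): converting N\'emethi's detailed algorithmic output into the clean assertion that $\mathrm{rank}(HF^+_{\mathrm{red}}(-Y))$ equals the total descent of $\tau$. For Brieskorn spheres the situation is particularly transparent in light of Theorem \ref{theo:main}: the trichotomy $\Delta(n) \in \{-1, 0, 1\}$ on $[0, N_0]$ together with the symmetry $\Delta(n) = -\Delta(N_0 - n)$ ensures that every descent of $\tau$ is by exactly one unit, so $\kappa$ literally counts these unit descents and matches the reduced rank generator-for-generator. The general Seifert case, with arbitrarily many singular fibers, requires the extension of Theorem \ref{theo:main} stated in Theorem \ref{theo:mainmore}, after which the bookkeeping proceeds identically.
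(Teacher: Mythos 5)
Your three-step skeleton is the same as the paper's, but two of your three inputs are false as stated, and your argument only reaches the correct formula because the two errors cancel. The culprit is the quantity $\min_n \tau(n)$, which you are implicitly assuming to vanish. First, it is not true in general that $\kappa = \chi(HF^+_{\mathrm{red}}(-Y))$: the correct statement (Corollary 3.7 of \cite{N}, which is what the paper invokes) is
$$
\mathrm{rank}\, HF_{\mathrm{red}}(-Y) \;=\; \min_n \tau(n) \;+\; \sum_n \max\{-\Delta(n),0\} \;=\; \min_n\tau(n) + \kappa.
$$
Your description of N\'emethi's output --- each consecutive local-max/local-min pair $(M_i,m_i)$ contributing $M_i-m_i$ generators --- computes the total descent of $\tau$, which is $\kappa$, but this overcounts the reduced rank by $|\min_n\tau(n)|$: the part of the descent that dips below the starting level $\tau(0)=0$ belongs to the infinite tower $\mathcal{T}^+$, not to the reduced summand. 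Second, it is not true that $d(-Y) = -(K^2+s)/4$: the bottom of the tower of the graded root sits at degree $2\min_n\tau(n)$ before the shift, so $d(-Y) = 2\min_n\tau(n) - (K^2+s)/4$; the canonical class need not realize the maximum in the Ozsv\'ath--Szab\'o correction-term formula. Both claims genuinely fail for $\varSigma(2,7,9)$: there $N_0=31$, $G(14,18,63)\cap[0,31]=\{0,14,18,28\}$, and one checks $\tau(14)=-1$, so $\min_n\tau(n)=-1$, $\kappa=4$ while $\mathrm{rank}\,HF_{\mathrm{red}}=3$, and $d(-Y)=-2-(K^2+s)/4$.

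The paper's proof keeps the correction term throughout: it derives $\kappa = \lambda(-Y) + d(-Y)/2 - \min_n\tau(n)$ and then observes that $d(-Y)/2 - \min_n\tau(n) = -(K^2+s)/8$ is half the degree-shift term. In your write-up the two occurrences of $\min_n\tau(n)$ silently cancel, which is why you land on the right identity, but as written the proof passes through two false lemmas. The fix is local: replace your (i) and (iii) by the corrected versions above and note the cancellation; the remaining ingredients (the Ozsv\'ath--Szab\'o Euler characteristic formula for $\lambda$, and the evenness of the grading so that $\chi = \mathrm{rank}$) are fine and agree with the paper.
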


In our next result we observe a  remarkable connection between $\kappa$, numerical semigroups and lattice points in tetrahedra.  
Casson invariants of Brieskorn spheres have similar interpretations. See \cite{FS}. 
We state it for the special case of $3$-singular fibers here. 
There is also a more technical statement that works for arbitrary number of singular fibers 
which we state in Theorem \ref{theo:genmono}.

\begin{Theorem}\label{theo:lattice}
Given a Brieskorn sphere $\varSigma(p,q,r)$ with defining integers $1<p<q<r$, its kappa invariant 
$\kappa (p,q,r)$ is equal to the number of lattice points inside 
the tetrahedron with vertices $(0,0,0)$, $(N_0/pq,0,0)$,  $(0,N_0/pr,0)$, and $(0,0,N_0/qr)$, where $N_0=pqr-pq-pr-qr$. 
In other words, $\kappa (p,q,r)$ equals the cardinality of the set $G\cap[0,N_0]$, 
where $G=G(pq,pr,qr)$ is the numerical semigroup generated by $0$, $pq$, $qr$, and $pr$.
\end{Theorem}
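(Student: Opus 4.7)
The plan is to leverage the four structural properties of $\Delta$ established in Theorem \ref{theo:main} in order to first translate $\kappa$ into a semigroup count, and then to identify that semigroup count with the lattice point count via a uniqueness-of-representation argument.

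First I would unpack the definition of $\kappa$ in light of Theorem \ref{theo:main}. Parts (2) and (4) imply that $\Delta(n)\geq 0$ for $n>N_0$ and that $\Delta$ takes only the values $-1,0,1$ on $[0,N_0]$, so each negative term in the sum defining $\kappa$ contributes exactly $-1$. Thus
$$\kappa(p,q,r)=\bigl|\{n\in [0,N_0]\,:\,\Delta(n)=-1\}\bigr|.$$
Next, the symmetry in part (3), $\Delta(n)=-\Delta(N_0-n)$, provides an involution $n\mapsto N_0-n$ on $[0,N_0]$ that swaps the sets $\{\Delta=-1\}$ and $\{\Delta=1\}$ (any fixed point $n=N_0/2$ would force $\Delta(n)=0$ and so lies in neither). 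Combining this with part (5) gives
$$\kappa(p,q,r)=\bigl|\{n\in[0,N_0]\,:\,\Delta(n)=1\}\bigr|=\bigl|G\cap[0,N_0]\bigr|,$$
which is the second equality in the theorem.

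Second, I would identify the lattice points inside the tetrahedron with vertices $(0,0,0)$, $(N_0/pq,0,0)$, $(0,N_0/pr,0)$, $(0,0,N_0/qr)$ as precisely those triples $(a,b,c)\in\mathbb{Z}_{\geq 0}^3$ satisfying $a\cdot pq+b\cdot pr+c\cdot qr\leq N_0$. The map sending $(a,b,c)$ to $n=a\cdot pq+b\cdot pr+c\cdot qr$ then lands in $G\cap[0,N_0]$, and surjectivity onto this set is immediate from the definition of $G(pq,pr,qr)$. What remains is injectivity, i.e., that every $n\in G\cap[0,N_0]$ has a \emph{unique} representation in the generators $pq,pr,qr$.

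The main obstacle is this uniqueness step. My plan is to exploit the bound $n\leq N_0<pqr$: in any representation $n=a\cdot pq+b\cdot pr+c\cdot qr$, each summand is at most $N_0<pqr$, forcing $a<r$, $b<q$, and $c<p$. Two different representations of the same $n$ would differ by a nonzero element of the syzygy lattice of the monoid, which is generated by the relations
$$r\cdot pq=q\cdot pr,\qquad r\cdot pq=p\cdot qr,\qquad q\cdot pr=p\cdot qr,$$
i.e., by the vectors $(r,-q,0)$, $(r,0,-p)$, $(0,q,-p)$. A short case analysis on an arbitrary integer combination $\lambda(r,-q,0)+\mu(r,0,-p)=((\lambda+\mu)r,-\lambda q,-\mu p)$ shows that any nonzero such difference forces one coordinate to have absolute value at least one of $p,q,r$, contradicting the bounds $a,a'<r$, $b,b'<q$, $c,c'<p$. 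This establishes injectivity and completes the bijection. Finally, the excluded case $(p,q,r)=(2,3,5)$ is handled separately: there $N_0=-1$, so both $\kappa$ and the lattice point count are $0$, consistent with $\varSigma(2,3,5)$ being an $L$-space.
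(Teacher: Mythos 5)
Your reduction of $\kappa$ to $\#\{n\in[0,N_0]:\Delta(n)=-1\}$ via parts (2) and (4), and then to $\#\{n\in[0,N_0]:\Delta(n)=1\}$ via the symmetry (3), is fine, since those items are established in the paper independently (Lemma \ref{lemm:mono}, Lemma \ref{lemm:symm}, Proposition \ref{prop:delta}). The genuine problem is the next step, where you invoke part (5) of Theorem \ref{theo:main} to convert this count into $|G\cap[0,N_0]|$: in the paper, the ``only if'' half of part (5) (that every $n\in[0,N_0]$ with $\Delta(n)=1$ lies in $G$) is itself deduced \emph{from} Theorem \ref{theo:lattice} by a cardinality comparison, the count of $-1$'s being exactly what Theorem \ref{theo:lattice} supplies. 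So, relative to the paper's logical order, your argument is circular. The paper's own proof avoids Theorem \ref{theo:main}(5) entirely: by Proposition \ref{prop:delta} together with Lemma \ref{l:deltaprop} and the Chinese remainder theorem, the number of $n$ with $\Delta(n)=-1$ equals the number of triples $(x,y,z)$ in $[0,p-1]\times[0,q-1]\times[0,r-1]$ with $\frac{x}{p}+\frac{y}{q}+\frac{z}{r}\geq 2$, and an explicit shift-and-reflect affine map identifies that corner region of the box with the stated tetrahedron.

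The gap is repairable within your own framework: the ``only if'' direction of part (5) follows directly from (\ref{e:deltacompact}) without any counting, since $\Delta(n)=1$ forces $\frac{n}{pqr}=f\left(\frac{np'}{p}\right)+f\left(\frac{nq'}{q}\right)+f\left(\frac{nr'}{r}\right)=\frac{x}{p}+\frac{y}{q}+\frac{z}{r}$ for integers $0\leq x\leq p-1$, $0\leq y\leq q-1$, $0\leq z\leq r-1$, whence $n=xqr+ypr+zpq\in G$; if you prove this in place of citing part (5), your route closes up and becomes a legitimate alternative to the paper's. I would also note that your bijection between $G\cap[0,N_0]$ and the lattice points of the tetrahedron, with the syzygy-lattice argument showing uniqueness of representations of integers below $pqr$, is correct and actually fills in a point the paper passes over silently (the ``in other words'' in the statement), and your separate treatment of $(2,3,5)$, where $N_0=-1$, is appropriate since Theorem \ref{theo:main} excludes that triple.
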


We push our techniques further to study a class of Brieskorn spheres that has a simple Heegaard-Floer homology. 
The following definition is due to Nemethi \cite{N}.

\begin{Definition}
A rational homology sphere $Y$ which is the boundary of a negative definite plumbing tree with at most one bad vertex 
is said to be \emph{weakly elliptic}, if its Heegaard-Floer homology in the canonical \spinc structure is of the form 
$T^+_{(d)}\oplus(\mathbb{Z}_{(d)})^l$ for some $l\geq 1$ and some even integer $d$. 
\end{Definition}
It is shown in Proposition 6.5 of \cite {N} that, if $Y$ weakly elliptic, then it is the link of a weakly elliptic singularity. 
Our next result gives the complete list of weakly elliptic Seifert homology spheres.

\begin{Theorem}\label{T:complete list}
A Brieskorn sphere $\varSigma = \varSigma(p,q,r)$ is weakly elliptic if and only if 
$(p,q,r)$ is equal to one of the following triplets; $(3,4,5),(2,5,7),(2,5,9)$, or $(2,3,r)$ with $gcd(6,r)=1$ and $r> 5$. 
There are no weakly elliptic Seifert homology spheres with more than three singular fibers. 
\end{Theorem}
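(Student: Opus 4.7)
The plan is to translate the weakly elliptic condition into a combinatorial condition on the sequence $\tau$, then into an arithmetic condition on the numerical semigroup $G(pq,pr,qr)$ and the integer $N_0$ via Theorem \ref{theo:main}. By Nemethi's graded root construction, a Seifert homology sphere $Y$ satisfies $HF^+(-Y)\cong\mathcal{T}^+_{(d)}\oplus(\mathbb{Z}_{(d)})^m$ with $m\geq 1$ precisely when the graded root consists of one infinite trunk together with $m$ side-leaves attached at a common level exactly one step above the trunk. Equivalently, $\tau$ assumes only two consecutive integer values, its stable (minimum) value, which encodes $d$, and one above it. For Brieskorn spheres with the normalization of (\ref{eq:rectau}) this becomes $\tau(n)\in\{0,1\}$ for every $n\in\mathbb{N}$, with the value $1$ attained.

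Combining this with Theorem \ref{theo:main}, and noting that for $n\in[0,N_0]$ one has $\Delta(n)=+1$ iff $n\in G$ and $\Delta(n)=-1$ iff $N_0-n\in G$, the condition $\tau\in\{0,1\}$ translates to an interleaving of $G$-elements with their mirror images: if $0=a_1<a_2<\cdots<a_\kappa$ are the elements of $G\cap[0,N_0]$, then
\[
a_1 < N_0-a_\kappa < a_2 < N_0-a_{\kappa-1} < \cdots < a_\kappa < N_0.
\]
This is a purely arithmetic condition on $G$ and $N_0$, and the proof reduces to checking it for the triples in the list and refuting it for all others.

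For sufficiency, the three sporadic triples $(3,4,5), (2,5,7), (2,5,9)$ each have $\kappa=2$ and the interleaving reduces to a pair of inequalities that are checked directly. For the infinite family $(2,3,r)$ with $\gcd(6,r)=1$ and $r>5$ one has $N_0=r-6$, while the two generators $2r$ and $3r$ exceed $N_0$; hence $G\cap[0,N_0]=\{0,6,12,\ldots,6(\kappa-1)\}$ with $\kappa=\lceil N_0/6\rceil$. Substituting $a_i=6(i-1)$ collapses the interleaving inequalities to $6(\kappa-1)<N_0<6\kappa$, which is automatic from the definition of $\kappa$ and the hypothesis $\gcd(6,r)=1$.

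For necessity I propose a case analysis using the monotonicity technique of Propositions \ref{prop:finite}, \ref{prop:kappa2}, \ref{prop:kappa3}. One verifies directly that each minimal forbidden triple, namely $(2,5,11), (2,7,9), (3,4,7), (3,5,7), (4,5,7)$, and so on, produces an index $n$ with $\tau(n)\notin\{0,1\}$; for instance for $(2,5,11)$ one computes $\tau(4)=-1$. Propagation by monotonicity in the partial order on tuples then rules out every larger triple. For $l\geq 4$ singular fibers I would invoke the generalization Theorem \ref{theo:mainmore}: with four or more pairwise coprime $p_i$ the analogue of $G$ has more generators active in the critical window, forcing either a value of $\Delta$ outside $\{-1,0,1\}$ or two consecutive $+1$'s without an intervening $-1$; the smallest candidate $\varSigma(2,3,5,7)$ is eliminated by a direct check and the rest by monotonicity.

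The main obstacle is the monotonicity of \emph{non}-weak-ellipticity, as opposed to monotonicity of $\kappa$, under the partial order on tuples. What I need is that once a bad index $n_0$ with $\tau(n_0)\notin\{0,1\}$ exists for some tuple, a bad index also exists for every tuple dominating it. I expect this to follow from a careful bookkeeping of how the initial segment $G\cap[0,N_0]$ and the mirror set $\{N_0-a:a\in G\cap[0,N_0]\}$ deform under the order, showing that an inversion in the interleaving pattern cannot be undone by enlarging the tuple. Should this argument prove awkward, an alternative is to use Theorem \ref{theo:finite} to bound $p_l$ in terms of $\kappa$ for each complexity level, reducing necessity to a finite verification outside the $(2,3,r)$ family.
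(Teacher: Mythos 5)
Your translation of weak ellipticity into the alternation/interleaving condition on $G(pq,pr,qr)\cap[0,N_0]$ and its mirror set is correct and matches the paper's starting point, and your sufficiency checks (the three sporadic triples plus the computation $N_0=r-6$, $G\cap[0,N_0]=\{0,6,\dots\}$ for $(2,3,r)$) are fine. The genuine gap is in the necessity direction, and you have flagged it yourself: the whole case analysis rests on the claim that non-weak-ellipticity propagates upward in the partial order on triples (and, for $l\geq 4$, under adding fibers), and this is never proved. It is not a routine bookkeeping matter: when the triple grows, the generators of $G$, the interval $[0,N_0]$, and the mirror points $N_0-a$ all change incompatibly, so there is no containment of $\pm1$ patterns to exploit; monotonicity of $\kappa$ (Propositions \ref{prop:finite}--\ref{prop:kappa3}) is of no help because weakly elliptic manifolds have unbounded $\kappa$ (indeed $\kappa(2,3,6k+1)=k$), and for the same reason the fallback via Theorem \ref{theo:finite} fails: one would have to run a ``finite verification'' at every complexity level, of which there are infinitely many. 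The $l\geq 4$ assertion (``more generators active in the critical window force a bad value of $\Delta$ or two consecutive $+1$'s'') is likewise asserted, not proved.

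The paper closes exactly this gap by a different device: a standalone lemma on numerical semigroups showing that $G(a,b,c)$ with $a+1<b<c$ alternates with respect to $n_0\notin G$ if and only if $a<n_0<b$, which yields the clean criterion (Corollary \ref{C:first criterion}) that $\varSigma(p,q,r)$ is weakly elliptic if and only if $N_0<pr$. Necessity then reduces to the single inequality $pqr-pq-qr-pr<pr$, i.e. $1-\frac{1}{p}-\frac{1}{q}-\frac{1}{r}<\frac{1}{q}$, whose solutions are enumerated by elementary estimates ($p<4$, then $p=3$ forces $(3,4,5)$, $p=2$ forces $q<6$, etc.), with no monotonicity of weak ellipticity needed anywhere; the case of four or more fibers is excluded by the same inequality argument. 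If you want to salvage your route, the realistic fix is to prove such a uniform criterion first (at which point upward propagation becomes a trivial monotonicity statement about $N_0-pr$ away from the $(2,3,r)$ family); as written, your proposal does not establish the forward implication of the theorem.
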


To further analyze the relationship between lattice point counting and $\tau$ of $\varSigma(p_1,\dots,p_l)$, 
we compare their generating functions. Our computations show that  the generating function of 
$\tau (n)$ is a rational function, similar to the generating function of the sequence counting the lattice points on the hyperplane 
$p_1x_1+p_2x_2+\cdots+p_lx_l=n$ that lie in the first orthant of $\R^l$. 
For the sake of space, here we write only the simplified version of the generating function of $\tau(n)$. 
See Theorem \ref{T: First Theorem} for its explicit form.

\begin{Theorem} \label{C:simplified}
The generating function $F(x)=\sum_n\,\tau (n)x^n$ is given by 
$$
F(x) = \frac{ G(x) }{(1-x^{p_1})(1-x^{p_2})\cdots(1-x^{p_l})},
$$
where $G(x)$ is a polynomial in $x$ with degree less than or equal to $p_1 + p_2 +\dots + p_l-1$. 
Furthermore, if $|e_0 | \neq 1$, then the  degree of $G(x)$ is exactly $p_1 + p_2 +\dots + p_l-1$. 
\end{Theorem}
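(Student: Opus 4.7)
The plan is to produce $F(x)$ from the generating function of the discrete derivative $\Delta(n) = \tau(n+1)-\tau(n)$. Since $\tau(0)=0$ and $\tau(n)=\sum_{m=0}^{n-1}\Delta(m)$, a routine interchange of summation gives
$$F(x) \;=\; \frac{x}{1-x}\, A(x), \qquad A(x) \;:=\; \sum_{n\geq 0}\Delta(n)\, x^n.$$
It therefore suffices to express $A(x)$ as a rational function with denominator dividing $(1-x)\prod_i(1-x^{p_i})$ and to control the numerator's degree.

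To compute $A(x)$ I would split $\Delta(n)=1+|e_0|n-\sum_i\lceil np_i'/p_i\rceil$ into three pieces. The constant and linear parts give $\tfrac{1}{1-x}$ and $\tfrac{|e_0|x}{(1-x)^2}$ by classical identities. For the ceiling pieces I exploit quasi-linearity: writing $n=kp_i+j$ with $0\leq j<p_i$ yields $\lceil np_i'/p_i\rceil = kp_i' + \lceil jp_i'/p_i\rceil$, hence
$$
\sum_{n\geq 0}\left\lceil\frac{np_i'}{p_i}\right\rceil x^n \;=\; \frac{p_i'\,x^{p_i}}{(1-x)(1-x^{p_i})} \;+\; \frac{Q_i(x)}{1-x^{p_i}},
$$
where $Q_i(x)=\sum_{j=0}^{p_i-1}\lceil jp_i'/p_i\rceil\, x^j$ is a polynomial of degree $p_i-1$. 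Factoring $1-x^{p_i}=(1-x)\sigma_i(x)$ with $\sigma_i(x)=1+x+\cdots+x^{p_i-1}$, the product $\prod_i(1-x^{p_i})=(1-x)^l\,\prod_i\sigma_i(x)$ carries $l$ factors of $(1-x)$; for a Seifert homology sphere $l\geq 3$, so these absorb the $(1-x)^{-3}$ pole coming from the $|e_0|x^2/(1-x)^3$ piece of $F(x)$ together with the $(1-x)^{-2}$ poles coming from the remaining pieces. Multiplying through by $\prod_i(1-x^{p_i})$ therefore produces an honest polynomial $G(x)$, and a uniform degree count on each of the four resulting summands immediately gives $\deg G(x)\leq \sum_i p_i -1$.

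For the sharpness claim, I would extract the coefficient of $x^{\sum p_i -1}$ in $G(x)$. The leading coefficient of $Q_i(x)$ is $\lceil(p_i-1)p_i'/p_i\rceil = p_i'$, so the top-degree contributions from the ``$p_i'x^{p_i}$'' summands and from the ``$Q_i(x)$'' summands cancel pairwise in $i$; what survives combines to $(-1)^{l-2}(1-|e_0|)$, which is nonzero precisely when $|e_0|\neq 1$. The main obstacle is the careful bookkeeping of signs and powers of $(1-x)$ across the four summands, but no deeper ingredient is needed beyond standard generating-function manipulations together with the quasi-polynomial structure of $\lceil np_i'/p_i\rceil$ in $n$. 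As a byproduct, this computation also recovers the explicit form of the numerator promised in Theorem~\ref{T: First Theorem}.
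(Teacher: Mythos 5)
Your proposal is correct and follows essentially the same route as the paper: you convert the recurrence for $\tau$ into the functional identity $F(x)=\frac{x}{1-x}A(x)$ and compute $\sum_n\lceil np_i'/p_i\rceil x^n$ by splitting $n$ into residue classes modulo $p_i$, which is exactly the content of the paper's Lemma \ref{L:prep1} and the proof of Theorem \ref{T: First Theorem}. The only difference is that you skip the extra simplification of Lemma \ref{L: en basit hali} and instead carry out explicitly the degree count and the leading-coefficient cancellation giving $(-1)^{l-2}(1-|e_0|)$ for the coefficient of $x^{p_1+\cdots+p_l-1}$ — details the paper subsumes under ``immediately implies'' — and your computation of that coefficient is correct.
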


The organization of our paper is as follows.  
In Section \ref{s:prelim} we review Nemethi's method and see how Heegaard-Floer homology 
is calculated from the $\tau$-function. We analyze the $\Delta$-function  
in Section \ref{s:andel}, and prove therein Theorem \ref{theo:main} and Theorem \ref{theo:lattice}. 
We extend our results to arbitrary number of singular fibers in Section \ref{s:more}. 
Theorems \ref{theo:lspace}, \ref{theo:finite}, and \ref{theo:list} are proved in Section \ref{s:topoappl}. 
In Section \ref{s:weakly elliptic}, we characterize weakly elliptic Brieskorn spheres interms of 
certain numerical semigroups and prove Theorem \ref{T:complete list}.
Finally, we conclude our paper by calculating the generating function of $\tau(n)$ in Section \ref{s:genfunc}.

\section{Graded Roots and Heegaard-Floer Homology}
\label{s:prelim}

Here we review the definition of a ``graded root,'' and discuss its basic properties.
For more information and background, we recommend \cite{N} and Section $2$ of \cite{AK}.

\begin{Definition}
A {\em graded root} is a pair $(R,\chi)$, where $R$ is an infinite tree, and $\chi$ is an integer valued function defined 
on the vertex set $V=V(R)$ of $R$ satisfying the following properties. 
\begin{enumerate}
	\item $\chi(u)-\chi(v)=\pm 1$, if there is an edge connecting $u$ and $v$.
	\item $\chi(u)>\mathrm{min}\{v,w\}$, if there are edges connecting $u$ to $v$, and $u$ to $w$.
	\item $\chi$ is bounded below.
	\item $\chi^{-1}(k)$ is finite for every $k$.
	\item $|\chi^{-1}(k)|=1$ for $k$ large enough.
\end{enumerate}
\end{Definition}

In Figure \ref{fig:GradedRoot} we give an example of a graded root, where the infinite tree 
$R$ is drawn on left, and the function $\chi$ is obtained from the heights of the vertices.

\begin{figure}[htp]
\begin{center}
\begin{tikzpicture}[scale=.65]

\begin{scope}[xshift=-2cm,yshift=0]

\path[draw,dashed,thick] (0,4.5) -- (0,3);   
\path[draw,-,thick] (0,3.5) -- (0,-3);   

\path[draw,-,thick] (0,-1) -- (1,-2);   
\fill[draw=blue,fill=blue, very thick] (1,-2) circle(3pt); 

\path[draw,-,thick] (0,0) -- (2,-2);   
\fill[draw=blue,fill=blue, very thick] (1,-1) circle(3pt); 
\fill[draw=blue,fill=blue, very thick] (2,-2) circle(3pt); 

\path[draw,-,thick] (0,0) -- (-2,-2);   
\fill[draw=blue,fill=blue, very thick] (-1,-1) circle(3pt); 
\fill[draw=blue,fill=blue, very thick] (-2,-2) circle(3pt); 

\foreach \y in {-3,...,3} 
{ 
\fill[draw=blue,fill=blue, very thick] (0,\y) circle(3pt); 
\path[draw,dashed] (3,\y) -- (12,\y); 
}

\end{scope}

\begin{scope}[xshift=1.5cm]

\node at (10,3) {4};
\node at (10,2) {3};
\node at (10,1) {2};
\node at (10,0) {1};
\node at (10,-1) {0};
\node at (10,-2) {$-1 $};
\node at (10,-3) {$-2 $};
\end{scope}

\end{tikzpicture}
\caption{A graded root.}
\label{fig:GradedRoot}
\end{center}
\end{figure}
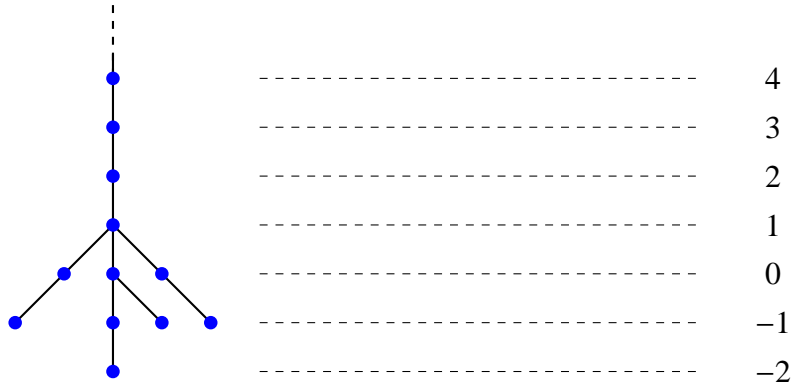

The branches of $R$ are enumerated from left to right, and the vertex at the bottom of the $i$-th 
branch is called the {\em $i$-th vertex}. Let us denote by $a_i$ the value of $\chi$ on the $i$-th vertex, 
and denote by $b_i$ the value of $\chi$ at the branching vertex connecting $i$-th branch to the $(i+1)$-th branch.
Then the data of $(R,\chi)$ is encoded in the sequence $[a_1,b_1,a_2,b_2,\dots,a_{n-1},b_{n-1},a_n]$.
For example, the graded root given in Figure \ref{fig:GradedRoot} is represented by the sequence $[-1,1,-2,0,-1,1-1]$.

Conversely, any sequence  $[a_1,b_1,a_2,b_2,\dots,a_{n-1},b_{n-1},a_n]$ satisfying  
$b_i>\mathrm{max}\{a_i,a_{i+1} \}$, for $i=1,\dots,n-1$ determines a graded root. 
For a given sequence $\tau$ with this property, we denote the corresponding graded root by $(R_\tau,\chi_\tau)$.

The nomenclature of ``graded root'' is explained by the natural correspondence between graded roots and 
graded $\Z[U]$-modules.
Consider the free $\mathbb{Z}$-module generated by the vertex set $V$. 
The $U$ action is described as follows. 
For a given vertex $v$, $U\cdot v$ has a summand supported at the vertex $w$, 
if there is an edge connecting $v$ to $w$, and $\chi(v)>\chi (w)$. 
Then the action of $U$ is extended by linearity. 
Finally, the grading is determined by the requirement that every vertex $v$ has degree $2\chi (v)$. 
Given a graded root $(R,\chi)$,  we denote the associated  $\mathbb{Z}[U]$-module by $\mathbb{H}(R,\chi)$. 
For example, for the graded root given in Figure \ref{fig:GradedRoot}, the associated $\mathbb{Z}[U]$-module is isomorphic to 
$\mathcal{T}^+_{(-4)}\oplus \mathcal{T}^1_{(-2)}\oplus \mathcal{T}^1_{(-2)}\oplus \mathbb{Z}_{(-2)}$. 
Here, we use the following notation: 
$\mathcal{T}^+_{(d)}=\mathbb{Z}[U,U^{-1}]/\langle U \rangle$, and $\mathcal{T}^n_{(d)}=\mathbb{Z}[U]/\langle U^{n+1} \rangle$; 
both groups are graded so that $U$ has degree $-2$, and the minimal degree is $d$.

Fix a Seifert homology sphere $\varSigma (p_1,\dots,p_l)$, and let $\tau$ denote the sequence defined recursively as in (\ref{eq:rectau}). 
It is known that $\tau (n)$ is an increasing function of $n$, for all sufficiently large $n \gg 0$. 
It follows that the subsequence consisting of local minima and local 
maxima of $\tau$ is a finite sequence. By abuse of notation, we denote this finite subsequence by $\tau$, also. 
Now consider the graded root given by $\tau$ and its $\mathbb{Z}[U]$-module $\mathbb{H}(R_\tau,\chi_\tau)$. 
It turns out that, up to a global degree shift 
the Heegaard-Floer homology of $\varSigma (p_1,\dots,p_l)$ is isomorphic to $\mathbb{H}(R_\tau,\chi_\tau)$.

The degree shift is calculated as follows. Let $X$ denote the $4$-manifold $X$ bounding $\varSigma (p_1,\dots,p_l)$, 
which is a star shaped plumbing of certain disk bundles over $2$-sphere with a negative definite intersection form. 
The second homology of $X$ has a natural basis $e_0,e_1,\dots,e_{s-1}$ consisting of base spheres. 
Here, $e_0$ corresponds to the central vertex in the plumbing graph, and $s$ is equal to the total number of vertices. 
The canonical $2$-cohomology class $K$ is defined by the requirement that $K(e_i)=-e_i\cdot e_i-2$. 
Then, the desired degree shift is given by $-(K^2+s)/4$. 

An alternative approach utilizes the ``Dedekind sums'' for computing the degree shift. 
The {\em Dedekind sum}, $s(p,q)$ is calculated recursively by setting $s(1,1)=0$ and repeatedly applying the reciprocity law
$$
s(p,q)+s(q,p)=-\frac{1}{4}+\frac{1}{12}\left (\frac{p}{q}+\frac{q}{p}+\frac{1}{pq} \right ), 
$$
and using the rule stating that whenever $r\equiv p\mod q$, the equality $s(p,q)=s(r,q)$ holds.

It is shown in \cite{NN} that 
\begin{equation}\label{e:Nemshift}
K^2+s=\epsilon ^2 e +e+5-12\sum_{i=1}^ls(p'_i,p_i),
\end{equation}
where
$$
e=e_0+\sum_{i=1}^l\frac{p'_i}{p_i},\;\;\; \mathrm{and} \;\;\; \epsilon=\left ( 2-l+\sum_{i=1}^l \frac{1}{p_i}\right )\frac{1}{e}.
$$

In conclusion we have the following result. 
\begin{Theorem}\label{theo:nem}(\cite{N})
For any Seifert homology sphere $Y:=\varSigma(p_1,p_2,\dots,p_l)$, 
the Heegaard-Floer homology group $HF^+(-Y)$ is isomorphic to $\mathbb{H}(R_\tau,\chi_\tau)$ 
with a degree shift $-(K^2+s)/4$, where $\tau $ is the sequence defined in (\ref{eq:rectau}), 
and $(R_\tau,\chi_\tau)$ is the associated graded root.  
\end{Theorem}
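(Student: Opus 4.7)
The plan is to invoke the combinatorial framework of Ozsv\'ath--Szab\'o \cite{OS1} for plumbed rational homology spheres with at most one bad vertex, and then reduce their lattice-level description to the one-variable sequence $\tau$ following \cite{N}. Since $Y=\varSigma(p_1,\dots,p_l)$ bounds a negative definite star-shaped plumbing $X$ whose only potentially bad vertex is the central one, the Ozsv\'ath--Szab\'o theorem applies and identifies $HF^+(-Y)$, up to an overall grading shift, with a $\mathbb{Z}[U]$-module built from characteristic cohomology classes on $H^2(X;\mathbb{Z})$ weighted by the quadratic form $K\mapsto (K^2+s)/4$.

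First I would fix the canonical characteristic class $K$ and parametrize all other characteristic classes compatible with the (unique) \spinc structure by lattice elements $x\in H_2(X;\mathbb{Z})$, thereby obtaining a weight function $\chi\colon H_2(X;\mathbb{Z})\to \mathbb{Z}$ whose sublevel sets recover the Ozsv\'ath--Szab\'o complex. The key computational step is to exhibit a distinguished \emph{computation sequence} $x_0,x_1,x_2,\dots$ of lattice points along which the increment $\chi(x_{n+1})-\chi(x_n)$ matches precisely the quantity $\Delta(n)=1+|e_0|n-\sum_{i=1}^l\lceil np_i'/p_i\rceil$ appearing in (\ref{eq:rectau}). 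This should follow from a direct calculation once one writes the framings of the star in terms of $(e_0,p_1'/p_1,\dots,p_l'/p_l)$ and tracks how each leg of the plumbing contributes to $K^2$; the ceiling functions arise naturally from the optimal ``greedy'' choice of lattice point on each leg at level $n$.

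Next I would verify that restricting $\chi$ to this computation sequence loses no homotopy information: any lattice point can be connected to a point on the sequence by a path along which $\chi$ is non-increasing, so the sublevel sets of $\chi$ deformation retract onto those of its restriction $\tau$. It follows that the graded root associated to the local extrema of $\tau$ is exactly the one governing the Ozsv\'ath--Szab\'o module, and the correspondence between graded roots and graded $\mathbb{Z}[U]$-modules reviewed earlier in this section delivers $HF^+(-Y)\cong \mathbb{H}(R_\tau,\chi_\tau)$ up to a uniform grading shift.

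Finally, the degree shift is pinned down by comparing the grading of the bottom of the infinite tower $\mathcal{T}^+\subset HF^+(-Y)$, which equals the $d$-invariant $d(-Y)$, to the grading of the lowest vertex of $R_\tau$, which is $2\min\tau=0$. The surgery formula for $d$ on a negative definite plumbing then forces the shift to be $-(K^2+s)/4$, with the explicit value (\ref{e:Nemshift}) in terms of Dedekind sums as a consequence. The hard part, as always, is the second step: showing that the multivariable lattice weight $\chi$ is genuinely controlled by the one-variable $\tau$, i.e.\ that the computation sequence captures all local extrema of $\chi$. This is the technical core of Nemethi's theorem and requires a careful monotonicity argument on paths in the lattice rather than any new calculation.
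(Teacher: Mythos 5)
This statement is not proved in the paper at all: Theorem \ref{theo:nem} is quoted from Nemethi \cite{N} (with the degree--shift formula (\ref{e:Nemshift}) imported from \cite{NN}), so there is no internal argument of the paper to compare yours against. Your outline does trace the route of Nemethi's original proof: start from the Ozsv\'ath--Szab\'o description of $HF^+$ of a negative definite plumbing with at most one bad vertex in terms of characteristic vectors graded by $(K^2+s)/4$, reorganize this as a weight function on the lattice, and collapse it to the one--variable sequence $\tau$ along a computation sequence whose increments are exactly $\Delta(n)$.

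As a proof, however, it has a genuine gap precisely where you flag it: the assertion that every lattice point can be joined to the computation sequence by a path along which the weight is non-increasing, so that the graded root of the multivariable weight coincides with that of $\tau$, is the entire content of the theorem and is left unproved; without it nothing rules out local extrema of the lattice weight that are invisible to $\tau$. Your method of pinning down the shift also contains an error: the lowest vertex of $R_\tau$ sits in degree $2\min_n\tau(n)$, and $\min\tau$ is not $0$ in general. For example, for $\varSigma(2,7,9)$ one has $N_0=31$, the $+1$'s of $\Delta$ occur at $\{0,14,18,28\}$ and the $-1$'s at $\{3,13,17,31\}$, so $\tau$ dips to $-1$. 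In the Ozsv\'ath--Szab\'o/Nemethi framework the shift $-(K^2+s)/4$ is built into the grading of the characteristic vectors from the outset; a posteriori one only has the relation $d(-Y)=2\min_n\tau(n)-(K^2+s)/4$ (used in the proof of Proposition \ref{prop:kappa}), so comparing $d(-Y)$ with the bottom of the tower determines the shift only if $d(-Y)$ is computed independently, which is not supplied by your argument.
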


\begin{Example}
Combining Theorem \ref{theo:nem} and our Theorem \ref{theo:main}, it is now easy to calculate the Heegaard-Floer homology of a
Seifert homology sphere. Let us illustrate this statement on $Y=\varSigma (2,3,11)$.

We have $N_0=5$. 
Consider $G=G(6,22,33)$, the numerical semigroup generated by the integers $6$, $22$, and $33$. 
The only element of $G$ that is contained in the interval $[0,N_0]$ is $0$. 
Therefore Theorem \ref{theo:main} implies  $\Delta(0)=1$, $\Delta (5)=-1$, and $\Delta(n)\geq 0$ for all $n \neq 0,5$. 
Hence $\tau (n)=\sum_{i=0}^{n-1} \Delta(i)$ has two local minimum  values (both of which are equal to $0$) and one local maximum value 
(which is equal to $1$). Let $(R_\tau,\chi_\tau)$ denote the graded root associated with the sequence $\tau=[0,1,0]$. 
Then $\mathbb{H}(R_\tau,\chi_\tau)=\mathcal{T}^+_{(0)}\oplus\mathbb{Z}_{(0)}$. 

We need to calculate the degree shift $-(K^2+s)/4$. 
It follows from (\ref{e:brieskorn}) that the Seifert invariants of $\varSigma(2,3,11)$ are given by 
$$
(e_0,(p',p),(q',q),(r',r))=(-2,(1,2),(2,3),(9,11)).
$$ 
\noindent We calculate the terms appearing in (\ref{e:Nemshift}), and see that $e=-1/66$, $\epsilon=5$. 
The Dedekind sums are calculated by repeatedly applying the reciprocity law:
\begin{align*}
s(1,2)&=0,\\
s(2,3)&=-1/18,\\
s(9,11)&=-5/22.
\end{align*}
Using these values in (\ref{e:Nemshift}), the degree shift is calculated to be $-(K^2+s)/4=-2$.
Theorem \ref{theo:main} says that $HF^+(-\varSigma(2,3,11))=\mathcal{T}^+_{(-2)}\oplus\mathbb{Z}_{(-2)}$.
\end{Example}

\section{Analysis of the Delta Function}\label{s:andel}

In order to determine the positions and values of the local extrema of $\tau$ function, we study its difference term
\begin{equation*}
\Delta(n)=1+e_0n-\left \lceil \frac{np'}{p} \right \rceil -\left \lceil \frac{nq'}{q} \right \rceil -\left \lceil \frac{nr'}{r} \right \rceil. 
\end{equation*}
Our first task is to write $\Delta$ as a quasi-polynomial. To this end, we consider $f: \Q \rightarrow [0,1]$ defined 
by 
\begin{equation}\label{d:f}
f(x):= \lceil x \rceil -x.
\end{equation}
\begin{Lemma} \label{l:deltaprop}
Given relatively prime integers $a$ and $m$, the sequence 
$$
g(n)=f(na/m)=\lceil na/m \rceil-na/m
$$ 
is periodic with period  $m$. 
Moreover, the finite sequence $(mg(0), mg(1),\dots,mg(m-1))$ is the same as the orbit of 
$m-a$ in the additive group $\mathbb{Z}/m\mathbb{Z}$. 
Consequently, for every $s \in \{0,\dots m-1\}$ there exists unique $n$ such that $0\leq n \leq m-1$ and $f(na/m)=s/m$. 
\end{Lemma}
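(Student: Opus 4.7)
\smallskip

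\noindent\textbf{Proof plan.} The plan is to establish the three assertions in turn, each essentially by direct computation.

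\emph{Step 1: Periodicity.} I would first observe that the ceiling function satisfies $\lceil y + k \rceil = \lceil y \rceil + k$ for every integer $k$, hence $f(y+k) = f(y)$. Applying this with $y = na/m$ and $k = a$ gives
$$
g(n+m) = f\!\left(\frac{(n+m)a}{m}\right) = f\!\left(\frac{na}{m} + a\right) = f\!\left(\frac{na}{m}\right) = g(n),
$$
so $g$ is periodic with period dividing $m$.

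\emph{Step 2: Identification with the orbit of $m-a$.} Multiplying the definition of $g(n)$ by $m$ gives $m g(n) = m \lceil na/m \rceil - na$. By the defining inequality of the ceiling, the right-hand side lies in $[0,m)$, so $m g(n) \in \{0, 1, \dots, m-1\}$. Reducing modulo $m$ yields
$$
m g(n) \equiv -na \equiv n(m-a) \pmod{m}.
$$
Thus $mg(n)$ is the unique representative in $\{0,\dots,m-1\}$ of $n(m-a) \bmod m$, which is precisely the $n$-th iterate of the element $m-a$ under the additive action of $\mathbb{Z}/m\mathbb{Z}$ on itself. In particular, the finite sequence $(mg(0), mg(1), \dots, mg(m-1))$ coincides with the orbit of $m-a$ as claimed.

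\emph{Step 3: Uniqueness corollary.} Because $\gcd(a,m)=1$, the element $m-a \equiv -a \pmod m$ is a generator of the cyclic group $\mathbb{Z}/m\mathbb{Z}$. Consequently its orbit under addition is a bijection onto $\{0,1,\dots,m-1\}$. Translating back, this says that the map $n \mapsto mg(n)$ from $\{0,1,\dots,m-1\}$ onto itself is a bijection; equivalently, for each $s \in \{0,\dots,m-1\}$ there is a unique $n$ in this range with $f(na/m) = s/m$.

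\emph{Anticipated difficulty.} None of the steps is really hard. The only subtlety worth being careful about is matching the wording ``orbit of $m-a$'' with the algebraic statement: since $\mathbb{Z}/m\mathbb{Z}$ is cyclic and the action is by addition, the orbit of $m-a$ is the sequence $0,\,m-a,\,2(m-a),\dots,(m-1)(m-a) \pmod m$, and it is this list (in this order) that I need to identify with $(mg(0), mg(1), \dots, mg(m-1))$. Once that identification is made, everything else follows from the coprimality of $a$ and $m$.
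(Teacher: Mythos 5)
Your proof is correct and follows essentially the same route as the paper: periodicity from the integer-shift invariance of $f$, and the identification $mg(n)\equiv n(m-a)\pmod m$ with $mg(n)\in\{0,\dots,m-1\}$ (the paper phrases this as ``$g(n)$ is the fractional part of $n(m-a)/m$''), after which coprimality of $a$ and $m$ gives the bijection. You merely spell out the final uniqueness step, which the paper leaves implicit.
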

\begin{proof}
Writing $n=pm+r$ we see that $g(n)=g(r)$, establishing the periodicity of $g$. 
For the second part it suffices to observe that $g(n)$ is the fractional part of $n(m-a)/m$. 
Indeed, 
$$
\frac{n(m-a)}{m} - \left \lfloor \frac{n(m-a)}{m}\right \rfloor = -\frac{na}{m} -\left \lfloor - \frac{an}{m}\right \rfloor 
= -\frac{na}{m} +\left \lceil \frac{an}{m}\right \rceil.
$$
\end{proof}
We rewrite $\Delta$ accordingly, as follows:
\begin{align}
\Delta(n) &=1-e_0n - \frac{np'}{p}- \frac{nq'}{q} - \frac{nr'}{r} -\left (\left \lceil \frac{np'}{p} \right \rceil - \frac{np'}{p} 
 +\left \lceil \frac{nq'}{q} \right \rceil - \frac{nq'}{q} +\left \lceil \frac{nr'}{r} \right \rceil  - \frac{nr'}{r} \right ) \notag \\
&=1-\frac{e_0pqr+p'qr+pq'r+pqr'}{pqr}n -\left ( f\left (\frac{np'}{p} \right ) +f\left (\frac{nq'}{q} \right ) + f\left (\frac{nr'}{r} \right ) \right ) \notag \\
&=1+\frac{1}{pqr}n -\left ( f\left (\frac{np'}{p} \right ) +f\left (\frac{nq'}{q} \right ) + f\left (\frac{nr'}{r} \right ) \right )\qquad 
\text{(by using\ (\ref{e:brieskorn}))}.
\label{e:deltacompact}
\end{align}

\begin{Remark}
The periodic nature of $\Delta$ is now apparent from (\ref{e:deltacompact}). 
This is suggested by the generating function calculation in Section \ref{s:genfunc}, also. 
In fact, it follows from generating function calculations that $\Delta$ is the sum of a linear polynomial and a periodic function 
(which, in turn, can be written as the sum of three periodic functions). 
\end{Remark}

Equality (\ref{e:deltacompact}) allows us to do the following critical analysis regarding the values of $\Delta$.
\begin{Proposition}\label{prop:delta}
For $0\leq n<pqr$, we have $-1 \leq \Delta (n) \leq 1$. Moreover,
\begin{enumerate}
	\item $\Delta (n)=-1$ if and only if $\;\displaystyle  f\left (\frac{np'}{p} \right ) +f\left (\frac{nq'}{q} \right ) + f\left (\frac{nr'}{r} \right ) \geq 2$.
	\item $\Delta (n)=1$ if and only if $\;\displaystyle f\left (\frac{np'}{p} \right ) +f\left (\frac{nq'}{q} \right ) + f\left (\frac{nr'}{r} \right )  \leq 1$.
\end{enumerate}
For $n \geq pqr$, we have $\Delta(n)\geq 0$.
\end{Proposition}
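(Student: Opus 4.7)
The plan is to exploit the compact form \eqref{e:deltacompact}, namely
$$\Delta(n)=1+\frac{n}{pqr}-S(n),\quad\text{where}\quad S(n):=f\!\left(\tfrac{np'}{p}\right)+f\!\left(\tfrac{nq'}{q}\right)+f\!\left(\tfrac{nr'}{r}\right),$$
together with two elementary facts: $\Delta(n)$ is an integer by its very definition as $1+e_0n$ minus three ceilings, and $f$ takes values in $[0,1)$, so $S(n)\in[0,3)$ for every $n$. For the range statement, if $0\le n<pqr$ then $n/(pqr)\in[0,1)$, and therefore $\Delta(n)\in(-2,2)$; integrality immediately forces $\Delta(n)\in\{-1,0,1\}$.

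Next, for the characterizations (1) and (2), rearrange \eqref{e:deltacompact} into $S(n)=1+n/(pqr)-\Delta(n)$ and consider the three possible values of $\Delta(n)$: the value $\Delta(n)=-1$ corresponds to $S(n)=2+n/(pqr)\in[2,3)$, the value $\Delta(n)=0$ corresponds to $S(n)=1+n/(pqr)\in[1,2)$, and $\Delta(n)=1$ corresponds to $S(n)=n/(pqr)\in[0,1)$. The two directions of each iff then follow by inspection: for instance, if $S(n)\ge 2$, then $\Delta(n)\le n/(pqr)-1<0$, and combined with $\Delta(n)\in\{-1,0,1\}$ this forces $\Delta(n)=-1$; the case $S(n)\le 1$ is symmetric. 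A minor subtlety worth noting is that the boundary values $S(n)=1$ and $S(n)=2$ can never actually be attained for $0<n<pqr$, because either would force the non-integer $n/(pqr)$ to be integral; the case $n=0$ is verified directly from $S(0)=0$ and $\Delta(0)=1$.

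For the last assertion, I would invoke Lemma \ref{l:deltaprop} to conclude that each of the three summands of $S$ is periodic with period $p$, $q$, or $r$, so $S$ itself has period $pqr$. Writing $n=k\cdot pqr+m$ with $k\ge 1$ and $0\le m<pqr$, one computes
$$\Delta(n)=1+k+\frac{m}{pqr}-S(m)=k+\Delta(m)\ge k-1\ge 0,$$
where the final inequality uses the bound $\Delta(m)\ge -1$ established in the first step. The main obstacle, such as it is, lies only in the boundary bookkeeping around the values $S(n)\in\{1,2\}$; integrality of $\Delta$ makes this bookkeeping immediate, and the entire argument then collapses into a few lines.
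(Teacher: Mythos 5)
Your argument is correct and follows essentially the same route as the paper: both rest on the compact form $\Delta(n)=1+n/(pqr)-S(n)$, the bound $S(n)\in[0,3)$, and the integrality of $\Delta$, with the two characterizations extracted exactly as you do. The only (harmless) deviation is in the final claim, where you invoke periodicity of $S$ and write $n=k\cdot pqr+m$; the paper gets $\Delta(n)>-1$ directly from $n/(pqr)\geq 1$ and $S(n)<3$, which is slightly shorter but equivalent.
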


\begin{proof}
Clearly, $\Delta(0)=1$. Suppose $0<n<pqr$. 
Let $A(n)=\;\displaystyle f\left (\frac{np'}{p} \right ) +f\left (\frac{nq'}{q} \right ) + f\left (\frac{nr'}{r} \right )$. 
We have $0<A(n)<3$, and $0<n/pqr<1$. Therefore $\Delta (n)=  1+ n/pqr-A(n)$ satisfies
$$
-2<\Delta(n)<2.
$$
Note that $\Delta (n)$ is integer valued, hence $-1 \leq \Delta(n) \leq 1$ for all $n<pqr$.
If $A(n) \geq 2$ then $\Delta (n) < 0$, so item 1 follows again from the fact that $\Delta$ is integer valued. 
Similarly, if $A(n) \leq 1$ then $\Delta > 0$, so we obtain the second item. 

For $n>pqr$, we have $n/pqr>1$, so $\Delta (n) > -1$ since $A(n)<3$.
\end{proof}

\vspace{1cm}

\begin{proof}[Proof of Theorem \ref{theo:lattice}]
It follows from Proposition \ref{prop:delta} that the number of times $\Delta$ attains -1 is the 
number of triples $(x,y,z) \in \N^3$ satisfying $0\leq x \leq p-1$, $0\leq y \leq q-1$, $0\leq z \leq r-1$
and 
\begin{align}\label{greater than 2}
\frac{x}{p} + \frac{y}{q}+ \frac{z}{r} \geq 2.
\end{align} 
We interpret this number as the number of lattice points in a tetrahedron as follows. 
Inequality (\ref{greater than 2}) is equivalent to $z \geq 2 r -  \frac{rx}{p} - \frac{ry}{q}$. 
Therefore, we are seeking for the number of lattice points inside the prism 
$[0,p-1]\times [0,q-1]\times [0,r-1]$ that lie above the hyperplane $\varGamma:\ z = 2 r -  \frac{rx}{p} - \frac{ry}{q}$.
A straightforward calculation shows that the hyperplane $\varGamma$ intersects $x= p-1$ plane 
along the line $qz+ry = qr+qr/p$.
Similarly, it intersects $y= q-1$ plane along the line $pz+rx = pr+pr/q$. 
On $z=r-1$ plane we have the line $x+y  = pq(r+1)/r$.
We depict a generic picture in Figure \ref{tetrahedron}.

\begin{figure}[htp]
\begin{center}
\begin{tikzpicture}[scale=.7,cap=round,>=latex]

\fill[draw=blue,fill=blue!20!, very thick] (-2.5,-2.5) circle(1.3pt);
\fill[draw=blue,fill=blue!20!, very thick] (5,0) circle(1.3pt);
\fill[draw=blue,fill=blue!20!, very thick] (0,5.5) circle(1.3pt);

\node at (-3.5,-2.5) {{\scriptsize $(p-1,0,0)$}};
\node at (-4.2,-4) {{\scriptsize $x$}};
\node at (5.9,.2) {{\scriptsize $(0,q-1,0)$}};
\node at (8.2,0) {{\scriptsize $y$}};
\node at (0,5.8) {{\scriptsize $(0,0,r-1)$}};
\node at (0,8.2) {{\scriptsize $z$}};

\fill[blue!7,opacity=0.8] (0,0) -- (-2.5,-2.5) -- (-2.5,3) -- (0,5.5) -- cycle; %
\fill[blue!12,opacity=0.8] (0,0) -- (-2.5,-2.5) -- (2.5,-2.5) -- (5,0) -- cycle; %
\fill[blue!17,opacity=0.8] (0,0) -- (0,5.5) -- (5 , 5.5) -- (5,0) -- cycle; %

\path[draw,dotted, thick,  -] (-2.5,3) -- (2.5,3);   
\path[draw,dotted, thick,  -] (2.5,3) -- (5,5.5);   
\path[draw,dotted, thick, -] (2.5,3) -- (2.5,-2.5);   
\path[draw,dotted, thick,  -] (-2.5,-2.5) -- (-2.5,3);   
\path[draw,dotted, thick,  -] (-2.5,-2.5) -- (-2.5,3);   
\path[draw,dotted, thick,  -] (-2.5,3) -- (0,5.5);   
\path[draw,dotted, thick,  -] (5,5.5) -- (0,5.5);   
\path[draw,dotted, thick,  -] (5,5.5) -- (5,0);   
\path[draw,dotted, thick,  -] (5,0) -- (2.5,-2.5);   
\path[draw,dotted, thick,  -] (2.5,-2.5) -- (-2.5,-2.5);   

\path[draw,dotted,->] (0,0) -- (-4,-4);   
\path[draw,dotted,->] (0,0) -- (8,0);   
\path[draw,dotted,->] (0,0) -- (0,8);   

\fill[blue!87,opacity=0.5] (-1,3) -- (2.5,3) -- (4,4.5) -- (-1,3) -- cycle; 
\fill[blue!57,opacity=0.5] (-1,3) -- (2.5,3) -- (2.5,-0.5) -- (-1,3) -- cycle; 
\fill[blue!27,opacity=0.5] (2.5,-0.5) -- (4,4.5) -- (2.5,3)  -- (2.5,-0.5) -- cycle; 
\fill[gray!87,opacity=0.5] (2.5,-0.5) -- (-1,3) -- (4,4.5)  -- (2.5,-0.5) -- cycle; 
\fill[draw=blue,fill=blue!20!, very thick] (2.5,3) circle(1.3pt);
\fill[draw=blue,fill=blue!20!, very thick] (4,4.5) circle(1.3pt);
\fill[draw=blue,fill=blue!20!, very thick] (-1,3) circle(1.3pt);
\fill[draw=blue,fill=blue!20!, very thick] (2.5,-.5) circle(1.3pt);

\path[draw,-, very thick,  -] (2.5,3) -- (4,4.5);   
\path[draw,-, very thick,  -] (2.5,-.5) -- (4,4.5);   
\path[draw,-, very thick,  -] (-1,3) -- (4,4.5);   
\path[draw,-, very thick,  -] (-1,3) -- (2.5,-.5);   
\path[draw,-, very thick,  -] (-1,3) -- (2.5,3);  
\path[draw,-, very thick,  -] (2.5,-.5) -- (2.5,3);   

\end{tikzpicture}
\caption{Tetrahedron corresponding to $\Delta=-1$}
\label{tetrahedron}
\end{center}
\end{figure}
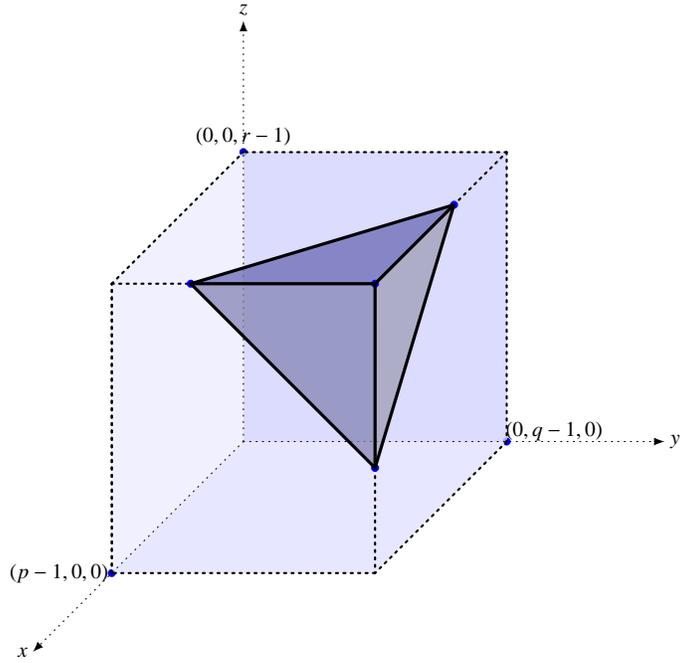

It remains to compute the lattice points in the tetrahedron with the vertices  
$A''=(p-1,q-1,r-1)$, $B''=(p-1, q-1, r/p+r/q)$, $C''=(p-1q/r+q/p,r-1)$ and $D''=(p/r+p/q,q-1,r-1)$.
Shifting the tetrahedron to the origin and simplifying its coordinates give:
\begin{align*}
A' &= (0,0,0)\\
B' &= (0,0, \frac{pq+qr+pr - pqr}{pq})\\
C' &= (0, \frac{pq+qr+pr - pqr}{pr}, 0)\\
D' &= (\frac{pq+qr+pr - pqr}{qr},0,0)
\end{align*}
The affine transformation $x \mapsto -x, y \mapsto -y$ and $z \mapsto -z$ does not alter the number of points in the tetrahedron:
\begin{align*}
A &= (0,0,0)\\
B &= (0,0, -\frac{pq+qr+pr - pqr}{pq})\\
C &= (0, -\frac{pq+qr+pr - pqr}{pr}, 0)\\
D &= (- \frac{pq+qr+pr - pqr}{qr},0,0)
\end{align*}
This proves our claim.

\end{proof}

\begin{Lemma}\label{lem:n0mono}
For positive pairwise relatively prime integers $(p,q,r)$ with $p<q<r$, define $N_0(p,q,r)=pqr-pq-qr-pr$. 
Then $N_0(p_1,q_1,r_1)\leq N_0(p_2,q_2,r_2)$, if $p_1\leq p_2$, $q_1 \leq q_2$, and $r_1\leq r_2$. 
Consequently $N_0(p,q,r)>0$ unless $(p,q,r)=(2,3,5)$.
\end{Lemma}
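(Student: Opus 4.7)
The plan is to exploit the fact that $N_0(p,q,r)=pqr-pq-qr-pr$ is affine linear in each coordinate when the remaining two are fixed. Direct finite-difference computations yield
\begin{align*}
N_0(p+1,q,r)-N_0(p,q,r) &= (q-1)(r-1)-1,\\
N_0(p,q+1,r)-N_0(p,q,r) &= (p-1)(r-1)-1,\\
N_0(p,q,r+1)-N_0(p,q,r) &= (p-1)(q-1)-1,
\end{align*}
each of which is nonnegative as soon as the two frozen coordinates are at least $2$. Since any pairwise coprime triple with $1<p<q<r$ necessarily satisfies $p\geq 2$, $q\geq 3$, $r\geq 5$ (the candidate $(2,3,4)$ being excluded by $\gcd(2,4)=2$), all three finite differences are nonnegative at every such triple.

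To prove the monotonicity, given endpoints with $p_1\leq p_2$, $q_1\leq q_2$, $r_1\leq r_2$, I would interpolate along the axis-aligned path
\[
(p_1,q_1,r_1)\to(p_2,q_1,r_1)\to(p_2,q_2,r_1)\to(p_2,q_2,r_2).
\]
At every unit integer step, the two frozen coordinates are inherited from one of the endpoint triples and hence are at least $2$; by the finite-difference formulas above, $N_0$ does not decrease. Telescoping yields $N_0(p_1,q_1,r_1)\leq N_0(p_2,q_2,r_2)$. Note that this argument is insensitive to whether the intermediate lattice points are themselves pairwise coprime.

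For the positivity assertion, I would apply monotonicity to reduce to a short case check. The componentwise bound $(p,q,r)\geq(2,3,5)$ immediately gives $N_0(p,q,r)\geq N_0(2,3,5)=-1$, so it only remains to upgrade this to strict positivity when $(p,q,r)\neq(2,3,5)$. I would split on the first coordinate that strictly exceeds the corresponding entry of $(2,3,5)$. If $p\geq 3$, then $(p,q,r)\geq(3,4,5)$ componentwise and monotonicity gives $N_0\geq N_0(3,4,5)=13$. If $p=2$ and $q\geq 4$, then $q$ is odd and hence $q\geq 5$, forcing $r\geq 7$, so $N_0\geq N_0(2,5,7)=11$. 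Finally, if $p=2$, $q=3$, then $r$ is coprime to $6$ and larger than $5$, hence $r\geq 7$, and the explicit identity $N_0(2,3,r)=r-6$ gives $N_0\geq 1$. The only real obstacle is disciplined bookkeeping in this case analysis, once one has explicitly noted that the monotonicity statement itself is agnostic to coprimality at intermediate lattice points.
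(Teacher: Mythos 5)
Your proof is correct and follows essentially the same route as the paper: the paper establishes coordinatewise monotonicity by checking that the partial derivatives of $h(x,y,z)=xyz-xy-yz-xz$ are positive on the relevant box, which is exactly the continuous counterpart of your finite-difference computation, and then deduces positivity by reducing to the minimal cases $(2,3,7)$ and $(3,4,5)$. Your slightly finer three-way case split for positivity is equivalent in substance.
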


\begin{proof}
Let $P$ denote the set of triples of positive, pairwise relatively prime integers $(p,q,r)$ with $p<q<r$. 
Consider the partial order on $P$ defined by 
\begin{align}\label{a:natural partial order}
(p_1,q_1,r_1) \leq (p_2,q_2,r_2)\ \text{if}\ p_1\leq p_2,\ q_1 \leq q_2,\ \text{and}\ r_1\leq r_2.
\end{align} 
The triple $(2,3,5)$ is the smallest element of $(P,\leq)$.  
Define $h(x,y,z)=xyz-xy-yz-xz$ for $x\geq 2$, $y\geq 3$, $z\geq 5$, and $x\leq y \leq z$. 
Then one has $\partial h/\partial x> 0$, $\partial h/\partial y> 0$, and $\partial h/\partial z> 0$. 
Therefore $N_0$ respects the partial order $\leq$ on $P$. This proves the first assertion. 
For the second, observe that if $(p,q,r)\in P$ and $(p,q,r)\neq (2,3,5)$, then $(p,q,r)\geq (2,3,7)$, or $(p,q,r)\geq (3,4,5)$. 
Since $N_0(2,3,7)>0$ and $N_0(3,4,5)>0$, we have $N_0(p,q,r)>0$.
\end{proof}

\begin{Lemma}\label{lemm:mono} Suppose $(p,q,r)\neq (2,3,5)$. 
Let $N_0 = pqr-pq - pr - qr>0$. Then $\Delta(N_0)=-1$, and for all $n > N_0$, we have $\Delta (n)\geq 0$ . 
\end{Lemma}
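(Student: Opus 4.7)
The plan is to exploit the compact expression for $\Delta$ established in \eqref{e:deltacompact},
\[
\Delta(n) = 1 + \frac{n}{pqr} - A(n), \qquad A(n) := f\!\left(\tfrac{np'}{p}\right) + f\!\left(\tfrac{nq'}{q}\right) + f\!\left(\tfrac{nr'}{r}\right),
\]
together with the uniform upper bound
\[
A(n) \;\leq\; M := \tfrac{p-1}{p}+\tfrac{q-1}{q}+\tfrac{r-1}{r} = 3-\tfrac{1}{p}-\tfrac{1}{q}-\tfrac{1}{r},
\]
which holds because each summand of $A$ lies in $[0,1)$ and, by Lemma~\ref{l:deltaprop}, its maximum value $(p-1)/p$ (resp.\ $(q-1)/q$, $(r-1)/r$) is attained exactly when the corresponding fractional part equals $1/p$ (resp.\ $1/q$, $1/r$).

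First I would evaluate $\Delta(N_0)$ directly. Reducing \eqref{e:brieskorn} modulo $p$ gives $p'qr \equiv -1 \pmod{p}$, and since $N_0 = pqr-pq-pr-qr \equiv -qr \pmod{p}$, this yields $N_0 p' \equiv 1 \pmod{p}$; hence $f(N_0 p'/p) = (p-1)/p$ by Lemma~\ref{l:deltaprop}. The same argument handles the $q$ and $r$ summands, so $A(N_0) = M$. Combining this with the identity $N_0/(pqr) = 1-1/p-1/q-1/r$ and Lemma~\ref{lem:n0mono} (which guarantees $N_0>0$), I obtain $\Delta(N_0) = 1 + N_0/(pqr) - M = -1$, as required.

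For the second assertion, Proposition~\ref{prop:delta} already yields $\Delta(n) \geq 0$ for all $n \geq pqr$, so only the range $N_0 < n < pqr$ remains. In this range the same proposition forces $\Delta(n) \in \{-1,0,1\}$, so the goal reduces to excluding the value $-1$. If $\Delta(n) = -1$, then $A(n) = 2 + n/(pqr)$, and $n > N_0$ would give
\[
A(n) \;>\; 2 + \frac{N_0}{pqr} \;=\; 3 - \frac{1}{p} - \frac{1}{q} - \frac{1}{r} \;=\; M,
\]
contradicting the upper bound $A(n) \leq M$.

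The only subtlety is isolating the correct global bound $M$ for $A$ and recognizing that it is attained precisely at $n = N_0$; once this is in place, the rest of the proof is essentially the modular arithmetic surrounding $N_0$, which is immediate from \eqref{e:brieskorn}. I do not anticipate any further obstacle.
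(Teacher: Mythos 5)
Your proposal is correct and follows essentially the same route as the paper: the congruences $N_0p'\equiv 1 \pmod p$ (and their analogues) give $\Delta(N_0)=-1$ via the quasi-polynomial form (\ref{e:deltacompact}), and the uniform bound $A(n)\leq 3-\tfrac1p-\tfrac1q-\tfrac1r$ combined with integrality of $\Delta$ rules out $\Delta(n)=-1$ for $n>N_0$. The only cosmetic difference is that you split off the range $n\geq pqr$ and argue by contradiction, whereas the paper bounds $\Delta(n)>-1$ directly for all $n>N_0$; the substance is identical.
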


\begin{proof}
The following congruences are easily verified:
\begin{eqnarray*}
p'N_0 &\equiv& 1 \; \mathrm{mod}\; p \\
q'N_0 &\equiv& 1 \; \mathrm{mod}\; q \\
r'N_0 &\equiv& 1 \; \mathrm{mod}\; r 
\end{eqnarray*}
Then Lemma \ref{l:deltaprop} implies
\begin{eqnarray*}
f\left (\frac{N_0p'}{p} \right )&=&\frac{p-1}{p} \\
f\left (\frac{N_0q'}{q} \right )&=&\frac{q-1}{q} \\
f\left (\frac{N_0r'}{r} \right )&=&\frac{r-1}{r}.
\end{eqnarray*}
Substituting these values in equation \ref{e:deltacompact} we get
$$
\Delta (N_0)= 2- \left ( \frac{1}{p} + \frac{1}{q} + \frac{1}{r}\right )-\left ( \frac{p-1}{p} +\frac{q-1}{q}+ \frac{r-1}{r} \right ) =-1.
$$
For the second part, using Lemma \ref{l:deltaprop} once again, we obtain the following estimate:
$$
f\left (\frac{np'}{p} \right ) + f\left (\frac{nq'}{q} \right ) + f\left (\frac{nr'}{r} \right ) \leq \frac{p-1}{p} + \frac{q-1}{q} + \frac{r-1}{r}
$$
for all $n$. 
Suppose $n=N_0+s$, for some $s>0$. Then
$$
\Delta(n)\geq  2- \left ( \frac{1}{p} + \frac{1}{q} + \frac{1}{r}\right )+\frac{s}{pqr}-\left ( \frac{p-1}{p} +\frac{q-1}{q}+ \frac{r-1}{r} \right ) >-1.
$$
Hence, $\Delta (n) \geq 0$ for all $n > N_0$.
\end{proof}

Next, we prove that $\Delta$-function is centrally symmetric with respect to $N_0/2$.
\begin{Lemma}\label{lemm:symm}
Let $N_0=pqr-pq-qr-pr$. For any integer $i$ such that $0 \leq i \leq N_0$, we have 
$$
\Delta (i)=- \Delta (N_0-i).
$$
\end{Lemma}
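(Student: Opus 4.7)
The plan is to work directly from the quasi-polynomial formula (\ref{e:deltacompact})
$$\Delta(n) = 1 + \frac{n}{pqr} - \left(f\!\left(\tfrac{np'}{p}\right) + f\!\left(\tfrac{nq'}{q}\right) + f\!\left(\tfrac{nr'}{r}\right)\right),$$
add $\Delta(i)$ to $\Delta(N_0-i)$, and show that the fractional contributions combine to cancel the linear term exactly. The key identity to establish first is that, for every integer $i$,
$$f\!\left(\tfrac{ip'}{p}\right) + f\!\left(\tfrac{(N_0-i)p'}{p}\right) = \frac{p-1}{p},$$
and likewise with $p$ replaced by $q$ and $r$. Granting this, the sum becomes
$$\Delta(i)+\Delta(N_0-i) = 2 + \frac{N_0}{pqr} - \frac{p-1}{p} - \frac{q-1}{q} - \frac{r-1}{r},$$
and since $N_0/pqr = 1 - 1/p - 1/q - 1/r$, every term cancels and we get $0$.

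To prove the pointwise identity, I would use the congruence $N_0 p' \equiv 1 \pmod{p}$, already computed in the proof of Lemma \ref{lemm:mono}. This means $N_0 p'/p = k + 1/p$ for some integer $k$, so that
$$\frac{(N_0-i)p'}{p} \equiv \frac{1}{p} - \frac{ip'}{p} \pmod{\mathbb{Z}}.$$
Since $f$ is invariant under integer shifts, I would write $ip' \equiv a \pmod{p}$ with $0 \le a \le p-1$ and split into three cases: (i) $a=0$, where $f(ip'/p)=0$ and $f((1-ip')/p) = f(1/p) = (p-1)/p$; (ii) $a=1$, where the two values are $(p-1)/p$ and $0$; and (iii) $2 \le a \le p-1$, where $f(ip'/p) = (p-a)/p$ and $f((1-ip')/p) = (a-1)/p$, using that $(1-a)/p \in (-1,0)$. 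In each case the sum is $(p-1)/p$.

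The main obstacle is not conceptual but bookkeeping: one must handle the ceiling of a negative non-integer carefully in case (iii) and keep track of when $p \mid ip'$ versus when $p \mid (ip'-1)$, in order to verify that \emph{every} $i$ with $0 \le i \le N_0$ falls under one of these three cases for each prime factor $p$, $q$, $r$ separately. Once this routine but delicate case-check is done for a single prime, the arguments for $q$ and $r$ are verbatim the same, and the final arithmetic cancellation using the identity $N_0/pqr = 1 - 1/p - 1/q - 1/r$ is immediate.
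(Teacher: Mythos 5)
Your proposal is correct and follows essentially the same route as the paper: both start from (\ref{e:deltacompact}), use the congruence $N_0p'\equiv 1 \pmod p$ to pair $i$ with $N_0-i$, and reduce to the pointwise identity $f\!\left(\tfrac{a}{p}\right)+f\!\left(\tfrac{1-a}{p}\right)=\tfrac{p-1}{p}$, which is exactly the paper's sublemma $\left\lceil \tfrac{1-a}{p}\right\rceil+\left\lceil \tfrac{a}{p}\right\rceil=1$ in disguise. Your three-case check versus the paper's two-case check is a purely cosmetic difference.
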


\begin{proof}
Let $a$, $b$, and $c$ be three integers defined by the conditions
\begin{eqnarray*}
a&\equiv& ip' \; \mathrm{mod} \; p, \;\; 0\leq a \leq p-1,\\
b&\equiv& iq' \; \mathrm{mod} \; q, \;\; 0\leq a \leq q-1,\\
c&\equiv& ir' \; \mathrm{mod} \; r, \;\; 0\leq a \leq r-1.
\end{eqnarray*}
Then by (\ref{e:deltacompact}) we have 
\begin{eqnarray*}
\Delta (i) &=& 1+\frac{i}{pqr}-\left (f\left (\frac{a}{p} \right ) +f\left (\frac{b}{q} \right ) + f\left (\frac{c}{r} \right )\right ), \\
\Delta (N_0-i) &=& 2-\frac{1}{p}-\frac{1}{q}-\frac{1}{r}-\frac{i}{pqr}-\left (f\left (\frac{1-a}{p} \right ) 
+f\left (\frac{1-b}{q} \right ) + f\left (\frac{1-c}{r} \right )\right ).
\end{eqnarray*}
After adding these two equations and plugging the definition of $f$ in, and doing the obvious cancellations, we see that
$$
\Delta (i) + \Delta (N_0-i) = 3- \left ( \left \lceil \frac{1-a}{p} \right \rceil + \left \lceil \frac{a}{p} \right \rceil+ 
\left \lceil \frac{1-b}{q} \right \rceil + \left \lceil \frac{b}{q} \right \rceil+ \left \lceil \frac{1-c}{r} \right \rceil + \left \lceil \frac{c}{r} \right \rceil\right ).
$$
Hence the following lemma finishes the proof

\begin{Lemma}
Let $a$ and $p$ be integers such that $0\leq a \leq p-1$. Then 
$$
\left \lceil \frac{1-a}{p} \right \rceil + \left \lceil \frac{a}{p} \right \rceil=1.
$$
\end{Lemma}

\begin{proof}
If $a\neq 0$, then the first term is $0$ and the other one is $1$. If $a=0$ then the first term is $1$ and the other one is $0$. 
In both cases they add up to $1$.
\end{proof}
\end{proof}

\begin{proof}[Proof of Theorem \ref{theo:main}]
The first four items follow from Lemma \ref{lem:n0mono}, Lemma \ref{lemm:mono}, 
Lemma \ref{lemm:symm}, and Proposition \ref{prop:delta}, respectively. 
The proof of the second part of Lemma \ref{lemm:mono} shows that $\Delta(n) \geq 0$ when $(p,q,r)=(2,3,5)$. 
It remains proving the fifth item.

Let $G=G(pq,pr,qr)$ denote the semigroup generated by 0, $pq$, $pr$, and $qr$. 
If $n\in G$, then $n=aqr+bpr+cpq$ for some $a,b,c\geq 0$. Hence, we see that 
\begin{eqnarray*}
np' & \equiv & -a \; \mathrm{mod}\; p,\\
nq' & \equiv & -b \; \mathrm{mod}\; q,\\
nr' & \equiv & -c \; \mathrm{mod}\; r.\\
\end{eqnarray*}
Let $\widetilde{a},\widetilde{b}$ and $\widetilde{c}$ denote the residues of $a,b,c$ modulo $p,q,r$, respectively. 
Then Lemma \ref{l:deltaprop} implies
$$
f\left (\frac{np'}{p} \right ) +f\left (\frac{nq'}{q} \right ) + f\left (\frac{nr'}{r} \right )=\frac{\widetilde{a}}{p} 
+\frac{\widetilde{b}}{q}+\frac{\widetilde{c}}{r}.
$$
Plugging in (\ref{e:deltacompact}), we get $\Delta (n)\geq 1$. It follows from Proposition \ref{prop:delta} that 
$\Delta(n)=1$, if $n\in G\cap[0,N_0]$.

Next, we show that $G\cap[0,N_0]$ contains all the elements $n\in[0,N_0]$ with $\Delta (n)=1$. 
We prove this by showing that the cardinalities of these two sets are equal. 
Indeed, by the symmetry proven in Lemma \ref{lemm:symm} the number of times $\Delta$ attains $+1$ in $[0,N_0]$ 
is equal to the number of times $\Delta$ attains $-1$ in the same interval. 
On the other hand, we know from Theorem \ref{theo:lattice} that the total number of $-1$'s of 
$\Delta$ is equal to the cardinality of $G\cap[0,N_0]$.
Therefore, the proof is complete.

\end{proof}

Next we establish the monotonicity of $\kappa$ on the set of ordered triples with respect to the natural partial order
$\leq$ defined in (\ref{a:natural partial order}).
\begin{Proposition}\label{prop:finite}
Suppose $(p_1,q_1,r_1)\geq (p_2,q_2,r_2)$, then $\kappa(p_1,q_1,r_1)\geq \kappa(p_2,q_2,r_2)$.
\end{Proposition}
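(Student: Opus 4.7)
The plan is to appeal to the lattice-point interpretation of $\kappa$ given in Theorem \ref{theo:lattice}, which recasts the proposition as a statement about containment of tetrahedra. Specifically, for $(p,q,r)\neq(2,3,5)$, $\kappa(p,q,r)$ counts the lattice points in
$$
T(p,q,r)=\bigl\{(x,y,z)\in\R_{\geq 0}^3:\;pq\,x+pr\,y+qr\,z\leq N_0(p,q,r)\bigr\},
$$
so it suffices to show $T(p_1,q_1,r_1)\subseteq T(p_2,q_2,r_2)$ whenever $(p_1,q_1,r_1)\leq(p_2,q_2,r_2)$ in the partial order $(\ref{a:natural partial order})$.

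The key step is to rewrite the defining inequality in a separable form. Dividing through by $p\,q\,r$ turns $pq\,x+pr\,y+qr\,z\leq pqr-pq-pr-qr$ into
$$
\frac{x}{r}+\frac{y}{q}+\frac{z}{p}\;\leq\;1-\frac{1}{p}-\frac{1}{q}-\frac{1}{r}.
$$
In this form, monotonicity is transparent: for fixed $(x,y,z)\in\R_{\geq 0}^3$ the left-hand side is weakly decreasing in each of $p,q,r$, while the right-hand side is strictly increasing in each of them. Therefore, if $(x,y,z)\in T(p_1,q_1,r_1)$, then
$$
\frac{x}{r_2}+\frac{y}{q_2}+\frac{z}{p_2}\;\leq\;\frac{x}{r_1}+\frac{y}{q_1}+\frac{z}{p_1}\;\leq\;1-\frac{1}{p_1}-\frac{1}{q_1}-\frac{1}{r_1}\;\leq\;1-\frac{1}{p_2}-\frac{1}{q_2}-\frac{1}{r_2},
$$
showing $(x,y,z)\in T(p_2,q_2,r_2)$. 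Since a lattice point in the smaller tetrahedron is then automatically a lattice point in the larger one, Theorem \ref{theo:lattice} yields $\kappa(p_1,q_1,r_1)\leq\kappa(p_2,q_2,r_2)$.

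The only thing to address separately is the exceptional triple $(2,3,5)$, where $N_0<0$ and the tetrahedron is empty. In that case Theorem \ref{theo:main} gives $\Delta(n)\geq 0$ for all $n$, so $\kappa(2,3,5)=0$; since $(2,3,5)$ is the minimum of $(P,\leq)$ by Lemma \ref{lem:n0mono}, the inequality $\kappa(2,3,5)\leq\kappa(p_2,q_2,r_2)$ holds trivially. I do not foresee a real obstacle: the whole argument hinges on the separable form of the tetrahedron's defining inequality, which makes monotonicity in each variable immediate, so the proof amounts essentially to this one rewriting together with a citation of Theorem \ref{theo:lattice}.
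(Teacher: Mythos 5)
Your proof is correct and follows essentially the same route as the paper: reduce via Theorem \ref{theo:lattice} to showing the tetrahedron of the smaller triple is contained in that of the larger, and deduce containment from monotonicity (the paper phrases this through the edge lengths $l_x(T)=r(1-\tfrac{1}{p}-\tfrac{1}{q})-1$ on the coordinate axes, you through the equivalent separable inequality $\tfrac{x}{r}+\tfrac{y}{q}+\tfrac{z}{p}\leq 1-\tfrac{1}{p}-\tfrac{1}{q}-\tfrac{1}{r}$). Your write-up is in fact a bit more careful, since it spells out why the containment holds pointwise and treats the degenerate triple $(2,3,5)$ explicitly.
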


\begin{proof}
In view of Theorem \ref{theo:lattice}, it suffices to show that the tetrahedron $T_1$ corresponding to $(p_1,q_1,r_1)$ 
contains the tetrahedron $T_2$ corresponding to $(p_2,q_2,r_2)$. The edge of $T_1$ on the $x$-axis has length 
$$
l_x(T_1)= r_1\left (1-\frac{1}{p_1}-\frac{1}{q_1} \right )-1,
$$
so, the hypothesis implies that $l_x(T_1)\geq l_x(T_2)$. 
By symmetry, the edges on the $y$-,  and the $z$-axes satisfy the same property, hence, the proof follows.  
\end{proof}

\section{Generalizations}
\label{s:more}

In this section we extend our results to Seifert homology spheres with four or more singular fibers. 
We start with a modified version of Theorem \ref{theo:main}.


\begin{Theorem}\label{theo:mainmore}
For $l\geq 4$, let $(p_1,p_2,\dots,p_l)$ be an $l$-tuple of pairwise relatively prime  integers with $1<p_1<p_2<\dots<p_l$.  
Let $\Delta: \mathbb{N}\to \mathbb{Z}$ denote the function 
$$
\Delta (n)=  1 + |e_0|n - \sum_{i=1}^{l}\left \lceil \frac{np_i'}{p_i}  \right \rceil ,
$$
where $(e_0,p'_1,p'_2,\dots,p'_l)$ is defined by the equation 
$$
e_0p_1p_2\cdots p_l+p_1'p_2\cdots p_l+p_1p_2'\cdots p_l+\cdots+p_1p_2\cdots p_l'=-1,
$$
with $0\leq p'_i\leq p_i-1$, for all $i=1,\dots,l$. Define the constant
$$
N_0=p_1p_2\cdots p_l\left ( \left (l-2 \right )-\sum_{i=1}^l\frac{1}{p_i}\right ) \in \Z_{>0}.
$$
Then
\begin{enumerate}
	\item $\Delta (n) \geq 0 $,  for all $n>N_0$.
	\item $\Delta(n)=-\Delta(N_0-n)$,  for $0\leq n\leq N_0$.
	\item  $\Delta(n)\in \{-(l-2),\dots,-1,0,1,\dots,l-2\}$, for $0\leq n \leq N_0$.
	\item For  $0\leq n \leq N_0$, one has $\Delta (n)\geq1$ if and only if either $n=0$, or 
	$n$ is an element of the numerical semigroup $G$ minimally generated by $p_1p_2\dots p_l/p_i$ for $i=1,2,\dots,l$.
	\item \label{i:theo6} If $n\in G$ is of the form $\displaystyle n=p_1p_2\dots p_l\sum _{i=1}^l \frac{x_i}{p_i}$, 
	then $\Delta (n)=1 + \sum _{i=1}^l \left \lfloor \frac{x_i}{p_i} \right \rfloor$.
\end{enumerate}
\end{Theorem}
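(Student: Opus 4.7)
My plan is to mirror the three-fiber argument of Section~\ref{s:andel} by channeling all five items through a single Chinese-remainder identity for $\Delta$.

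First I would rewrite $\Delta$ in the compact form of (\ref{e:deltacompact}): using the Diophantine relation in the hypothesis together with $\lceil y\rceil = y+f(y)$ for the function $f$ of (\ref{d:f}), one finds
$$
\Delta(n) = 1 + \frac{n}{p_1p_2\cdots p_l} - \sum_{i=1}^{l} f\!\left(\frac{np_i'}{p_i}\right).
$$
Setting $M_i := p_1p_2\cdots p_l/p_i$, reducing the Diophantine equation modulo $p_j$ yields $p_j'M_j\equiv -1\pmod{p_j}$. If I now define $d_i\in\{0,1,\dots,p_i-1\}$ to be the unique residue with $d_iM_i\equiv n\pmod{p_i}$, equivalently $d_i\equiv -np_i'\pmod{p_i}$, then Lemma~\ref{l:deltaprop} gives $f(np_i'/p_i)=d_i/p_i$. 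The Chinese remainder theorem yields $\sum_i d_iM_i\equiv n\pmod{p_1\cdots p_l}$, so there is a unique integer $k$ with
$$
n = \sum_{i=1}^l d_iM_i - k\,p_1p_2\cdots p_l,
$$
and substituting into the compact form of $\Delta$ telescopes to the master identity $\Delta(n) = 1-k$.

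Once this identity is in hand, each item becomes an elementary statement about $k$. Using the bounds $0\le \sum d_iM_i\le (l-\sum 1/p_i)\,p_1\cdots p_l$, the hypothesis $n>N_0$ of item~(1) forces $k\le 1$, and the hypothesis $0\le n\le N_0$ of item~(3) forces $3-l\le k\le l-1$, both by direct arithmetic. For item~(2), I would exploit $N_0\equiv -M_i\pmod{p_i}$ to establish $d_i(n)+d_i(N_0-n)=p_i-1$ for every $i$, then sum the two compact formulas to get $\Delta(n)+\Delta(N_0-n)=0$. For items~(4) and~(5), if $n\in G$ with $n=\sum x_iM_i$ and $x_i\ge 0$, writing $x_i=q_ip_i+r_i$ with $0\le r_i\le p_i-1$ and invoking uniqueness of the CRT residues forces $r_i=d_i$, whence $k=-\sum q_i$ and $\Delta(n) = 1+\sum q_i = 1+\sum\lfloor x_i/p_i\rfloor\ge 1$. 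Conversely, if $\Delta(n)\ge 1$ then $k\le 0$ and one reads off $n = (d_1-kp_1)M_1+\sum_{i\ge 2}d_iM_i\in G$ by dumping the slack $-k\,p_1\cdots p_l$ into the first coordinate.

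The main obstacle is that the symmetry-plus-cardinality trick which capped the proof of Theorem~\ref{theo:main}(5) will not transfer: for $l\ge 4$ the function $\Delta$ attains values outside $\{-1,0,1\}$, so $|\Delta^{-1}(1)\cap[0,N_0]|$ and $|\Delta^{-1}(-1)\cap[0,N_0]|$ no longer coincide in a way that isolates $G$. The CRT identity $\Delta(n)=1-k$ sidesteps this by simultaneously computing $\Delta(n)$ and certifying semigroup membership, so it replaces the lattice-point argument entirely. A secondary, routine check will be to verify that the formula in item~(\ref{i:theo6}) is independent of the chosen decomposition $n=\sum x_iM_i$; this reduces to observing that any two such decompositions differ by $(x_i-y_i)=m_ip_i$ with $\sum m_i=0$, so $\sum\lfloor x_i/p_i\rfloor$ is invariant.
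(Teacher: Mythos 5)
Your proof is correct, and for the crucial fourth item it takes a genuinely different and more economical route than the paper. The compact form you start from is exactly the paper's equation (\ref{e:delcompgen}), and your treatment of items (1)--(3) and (5) is the paper's argument repackaged through the single identity $\Delta(n)=1-k$, where $n=\sum_i d_iM_i-k\,p_1\cdots p_l$; I checked the arithmetic ($p_j'M_j\equiv-1\pmod{p_j}$, $f(np_i'/p_i)=d_i/p_i$, the bounds on $k$, and $d_i(N_0-n)=p_i-1-d_i(n)$) and it all holds. The real divergence is in the implication $\Delta(n)\geq 1\Rightarrow n\in G$: you get it locally and for free, by observing $k\leq 0$ and absorbing the slack $-k\,p_1\cdots p_l$ into a single generator, which even yields the stronger global statement $\Delta(n)\geq 1\iff n\in G$ for all $n\in\N$ and would equally well replace the cardinality/symmetry trick used for Theorem \ref{theo:main}(5). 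The paper instead proves this direction by a global counting argument: it defines $\kappa'=\sum_{n\in G\cap[0,N_0]}\Delta(n)$, evaluates it through the covering maps of Lemmas \ref{l:cov1} and \ref{l:cov2}, and matches the result against the weighted lattice-point formula of Theorem \ref{theo:genmono}, concluding that equality $\kappa'=\kappa$ leaves no room for positive values of $\Delta$ outside $G$. Your approach is shorter and self-contained; the paper's pays off because Theorem \ref{theo:genmono} and the covering lemmas are needed anyway for the monotonicity of $\kappa$ and the topological applications, so item (4) there comes as a byproduct of machinery already built. Your closing remark on well-definedness of the formula in item (5) is also correct and worth keeping, since two representations $n=\sum x_iM_i=\sum y_iM_i$ force $x_i\equiv y_i\equiv d_i\pmod{p_i}$ and hence $\sum\lfloor x_i/p_i\rfloor=\sum\lfloor y_i/p_i\rfloor$.
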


We omit the proofs of items 1-3, since they are identical to the case $l=3$, 
except that one needs a generalization of Equation \ref{e:deltacompact} to write $\Delta (n)$ as a linear quasi-polynomial.
\begin{equation}\label{e:delcompgen}
\Delta(n)=1+\frac{n}{p_1\dots p_l}-\sum_{i=1}^lf\left ( \frac{np'_i}{p_i}\right ).
\end{equation}
The proofs of items 4 and 5 are postponed to the end of the chapter. 
The proof of item 4 relies on a generalization of Theorem \ref{theo:lattice} which we discuss now.

Let $(p_1,p_2,\dots,p_l)$ be an $l$-tuple of pairwise relatively prime integers with $1< p_1 < \dots < p_l$. 
In the case where $l=3$, it is readily known that  $\kappa(p_1,p_2,p_3)$ equals the number of lattice points in a tetrahedron. 
For $l \geq 4$, $\kappa(p_1,p_2,\dots,p_l)$ is still equal to the number of 
lattice points in a polytope, however, the polytope is not necessarily a tetrahedron. 
Another difference is that, this time each lattice point is counted with a certain multiplicity.
To state our result we need more notation. Define 
$$
N_0(k)=p_1\dots p_l\left ( k-\sum_{i=1}^l \frac{1}{p_i} \right ).
$$
Let $H_k$ be the hyperplane in $\mathbb{R}^l$ defined by

\begin{equation}\label{eq:hyperplane}
\sum_{i=1}^l\frac{x_i}{p_i}=\frac{N_0(k)}{p_1p_2\dots p_l}.
\end{equation}
Suppose $k\in \{1,2,\dots,l-2 \}$ with $N_0(k) >0$. Then $H_k$ cuts out a tetrahedron $T_k$ 
from the first orthant of $\mathbb{R}^l$. For convenience, we define $T_k = \emptyset$ if $N_0(k) < 0$. 
Then we have 
$$  
T_{l-2} \supset T_{l-3}\supset \cdots T_1 \supset T_0.  
$$
Let $C=C_{p_1,\dots,p_l}$ denote the $l$-dimensional cube $[0,p_1-1]\times[0,p_2-1]\dots\times[0,p_l-1]\subset \mathbb{R}^l$, and set    
\begin{equation}\label{d:Ak}
A_k=\# \left ( (T_k - T _{k-1}) \cap C \cap \mathbb{Z}^l \right ).
\end{equation}
In other words, $A_k$ is the number of lattice points from $C$ that lie between the hyperplanes $H_k$ and $H_{k-1}$. 
Note that $A_{l-2}>0$, and $A_j=0$ if $N_0(j)<0$.

Recall that the $\kappa$-invariant is equal to the sum of all negative values of $\Delta (n)$.
\begin{Theorem}\label{theo:genmono}
$$
\kappa(p_1,p_2,\dots,p_l)=\sum_{j=1}^{l-2}\frac{(l-j-1)(l-j)}{2}A_j.
$$
\end{Theorem}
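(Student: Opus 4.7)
The plan is to reduce $\kappa$ to a weighted count of lattice points in the cube $C=[0,p_1-1]\times\cdots\times[0,p_l-1]$, and then convert the weights into $A_j$'s via the central reflection $\phi(x)=(p_1-1,\dots,p_l-1)-x$ of $C$. Throughout, write $P=p_1\cdots p_l$, $m_i=P/p_i$, $\sigma=\sum_i 1/p_i$, and set $\widetilde{a}_i(n)=p_i f(np_i'/p_i)\in\{0,\dots,p_i-1\}$ and $S(n)=\sum_i\widetilde{a}_i(n)/p_i$.

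First I would unwind the quasi-polynomial expression (\ref{e:delcompgen}). Lemma \ref{l:deltaprop} together with the Chinese Remainder Theorem shows that as $n$ runs through $[0,P-1]$ the tuple $(\widetilde{a}_1(n),\dots,\widetilde{a}_l(n))$ runs bijectively through $C\cap\mathbb{Z}^l$, and $n\equiv\sum_i\widetilde{a}_i(n)m_i\pmod{P}$. Because $\gcd(m_i,p_i)=1$, the value $S(n)$ is a positive integer precisely when every $\widetilde{a}_i(n)$ vanishes, i.e., when $n=0$. Hence for all $n\in[0,P-1]$ we have $n=PS(n)-P\lfloor S(n)\rfloor$, and substitution into (\ref{e:delcompgen}) gives the key identity
\[
\Delta(n)=1-\lfloor S(n)\rfloor\qquad(n\in[0,P-1]).
\]
Since $f$ is invariant under integer translations, $\Delta(n+P)=\Delta(n)+1$, and hence $\Delta(n'+kP)=k+1-\lfloor S(n')\rfloor$ for all $n'\in[0,P-1]$ and $k\geq 0$.

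By Theorem \ref{theo:mainmore}(1), every $n$ with $\Delta(n)<0$ lies in $[0,N_0]$, so $\kappa=\sum_{n\geq 0}\max\{0,-\Delta(n)\}$ with no truncation needed. I would organize this double sum by ``columns'' indexed by $n'\in[0,P-1]$: for fixed $n'$, the integers $k=0,1,\dots,\lfloor S(n')\rfloor-2$ are exactly those for which $\Delta(n'+kP)<0$, and the corresponding magnitudes $\lfloor S(n')\rfloor-1-k$ form an arithmetic progression summing to $\binom{\lfloor S(n')\rfloor}{2}$ (interpreted as $0$ when $\lfloor S(n')\rfloor\leq 1$). Pushing the sum through the CRT bijection yields
\[
\kappa=\sum_{x\in C\cap\mathbb{Z}^l}\binom{\lfloor S(x)\rfloor}{2}=\sum_{j\geq 2}\mu_j\binom{j}{2},
\]
where $\mu_j=\#\{x\in C\cap\mathbb{Z}^l:\lfloor S(x)\rfloor=j\}$ and $S(x)=\sum x_i/p_i$.

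Finally I would invoke the reflection $\phi$. It is a bijection of $C\cap\mathbb{Z}^l$ sending $S(x)$ to $l-\sigma-S(x)$, so it carries the defining interval $(m-1-\sigma,m-\sigma]$ of $A_m$ onto $[l-m,l-m+1)$. Because $S$ avoids every positive integer, this latter interval picks out exactly the tuples with $\lfloor S\rfloor=l-m$, so $A_m=\mu_{l-m}$. Substituting $m=l-j$ in the column-sum formula for $\kappa$ converts it into $\sum_{m=1}^{l-2}\binom{l-m}{2}A_m$, which is the claimed identity. The main obstacle is Step 1: translating the a priori unwieldy sum in (\ref{e:delcompgen}) into the clean arithmetic statement $\Delta=1-\lfloor S\rfloor$. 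Once that is in place, the periodic-plus-linear decomposition $\Delta(n'+kP)=\Delta(n')+k$ turns $\kappa$ into a triangle of contributions per column, and the reflection symmetry finishes the job.
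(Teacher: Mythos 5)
Your argument is correct and is essentially the paper's own proof in a different order of summation: the paper fixes the block $[(k-1)P,kP)$ and counts how often $\Delta=-t$ there (its $B(k,t)=\widetilde{A}_{l-t-k}$), while you fix the residue class mod $P$ and sum the negative values down the column to get $\binom{\lfloor S\rfloor}{2}$, but both rest on the same two ingredients, namely the CRT/Lemma~\ref{l:deltaprop} bijection between residues mod $P$ and lattice points of $C$, and the reflection $x_i\mapsto p_i-1-x_i$ identifying the slabs of $C$ with the $A_j$. (Only a cosmetic slip: $S(n)$ is an \emph{integer} iff all $\widetilde{a}_i(n)$ vanish, in which case $S(n)=0$, so $S$ never takes a positive integer value; the identity $n=PS(n)-P\lfloor S(n)\rfloor$ and everything downstream are fine.)
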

\begin{proof}

We begin with describing a useful affine transformation on $\mathbb{R}^l$. 
Let $\varphi : \mathbb{R}^l \to \mathbb{R}^l$ denote the map defined by $\varphi (x_1,x_2,\dots,x_l)=(y_1,y_2,\dots,y_l)$, 
where $\displaystyle y_i=-\left ( x_i-(p_i-1) \right )$ for $i=1,2,\dots,l$. Clearly, $C$ is invariant under $\varphi$, and moreover,  
$\varphi$ maps the hyperplane $\widetilde{H}_k$ defined by 
$$
\sum_{i=1}^l\frac{x_i}{p_i}=l-k
$$
to the hyperplane $H_k$ defined in (\ref{eq:hyperplane}). 
Let $\widetilde{A}_k$ denote the number of lattice points in $C$ that lie between $\widetilde {H}_{k-1}$ and $\widetilde {H}_{k}$. 
Then $\widetilde{A}_k=A_k$ for every $k \in \{1,2,\dots,l-2\}$.

Next, we compute the sum of negative values of $\Delta$.
By Lemma \ref{l:deltaprop}, each $f\left (\frac{np'_i}{p_i}\right )$ term equals 
$\frac{x_i}{p_i}$ for some $x_i\in \{0,1,\dots,p_i-1\}$. 
Hence, (\ref{e:delcompgen}) implies that $\Delta (n) \geq 0$ for $n\geq (l-2)p_1\dots p_l$.
Therefore, it remains to find the sum of negative values of $\Delta(n)$ for $(k-1)p_1\dots p_l\leq n < kp_1\dots p_l$, where 
$k\in \{0,1,\dots,l-3 \}$, 

Notice that, in the interval $(k-1)p_1\dots p_l\leq n < kp_1\dots p_l$, we have 
$\Delta (n)\geq -(l-k-1)$.
Let $t\in \{ 1,2,\dots,l-k-1 \}$ and let $B(k,t)$ denote the number of times $\Delta$ attains the value $-t$ in the interval 
$(k-1)p_1\dots p_l \leq n < kp_1\dots p_l$. 
It follows from (\ref{e:delcompgen}) that $B(k,t)$ is equal to the number of integers $n$ which solve the inequality 
\begin{equation}\label{e:ineq}
t+k+1 \geq \sum_{i=1}^l f \left ( \frac{np_i'}{p_i}\right ) \geq t+k,
\end{equation}
with $(k-1)p_1\dots p_l \leq n < kp_1\dots p_l.$

Observe that the Chinese remainder theorem combined with Lemma \ref{l:deltaprop} implies that  
given any $l$-tuple $(x_1,\dots,x_l)$ with $x_i\in \{0,1,\dots,p_i-1\}$, 
there exists unique $n$ in the interval $(k-1) p_1\dots p_l \leq n < kp_1\dots p_l$ such that 
$f\left (\frac{np_i'}{p_i}\right )=\frac{x_i}{p_i}$ for $i\in \{1,\dots,l\}$. Thus,  
in the light of inequality (\ref{e:ineq}), we see that $B(k,t)$ is equal to the number of lattice points 
in $C$ that lie between $\widetilde{H}_{l-t-k}$ and $\widetilde{H}_{l-t-k-1}$. In other words $B(k,t)=\widetilde{A}_{l-t-k}$. 
Finally, by the following manipulations we finish the proof.
\begin{align*}
\kappa(p_1,p_2,\dots,p_l) &=  \left | \sum_{n=0}^\infty \mathrm{min} \{0, \Delta(n) \} \right |  
=  \left | \sum_{k=1}^{l-2}\sum _{n=(k-1)p_1\dots p_l}^{kp_1\dots p_l} \mathrm{min} \{0, \Delta(n) \} \right | \\
&=  \sum_{k=1}^{l-2}\sum _{t=1}^{l-k-1}t B(k,t) =  \sum_{k=1}^{l-2}\sum _{t=1}^{l-k-1}t \widetilde{A}_{l-t-k}\\
&=  \sum_{k=1}^{l-2}\sum _{t=1}^{l-k-1}t A_{l-t-k}=  \sum_{k=1}^{l-2}\sum _{j=1}^{l-k-1}(l-k-j) A_{j} \\
&=  \sum_{j=1}^{l-2}\sum _{k=1}^{l-j-1}(l-k-j) A_{j}\\
&=  \sum_{j=1}^{l-2}\frac{(l-j-1)(l-j)}{2} A_{j}. 
\end{align*}

\end{proof}

In order for proving part 4 of Theorem \ref{theo:mainmore}, we need to relate the count of lattice points given in 
Theorem \ref{theo:genmono} to the number of lattice points in the tetrahedra $T_k$. 
This relation is established with the help of the function $\pi :\mathbb{Z}^l \rightarrow C\cap \mathbb{Z}^l$ defined by
$\pi(x_1,x_2,\dots,x_l) = (z_1,z_2,\dots,z_l)$, where $z_i\equiv x_i\; (\mathrm{mod} \; p_i)$ 
with $0\leq z_i \leq p_i-1$ for $i=1,\dots,l$.

\begin{Lemma}\label{l:cov1} Suppose $k\in \{1,2,\dots,l-2 \}$ with $N_0(k)>0$. Then 
$$
\pi \left ( (T_k\setminus T_{k-1})\cap \mathbb{Z}^l  \right ) =T_k\cap C \cap \mathbb{Z}^l.
$$
\end{Lemma}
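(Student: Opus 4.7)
My plan is to prove the two set inclusions separately, leveraging the observation that the hyperplanes $H_{k-1}$ and $H_k$, measured via the linear form $L(x) := \sum_{i=1}^l x_i/p_i$, differ by exactly $1$. The strip $T_k \setminus T_{k-1}$ is therefore thin enough that every prescribed residue $z \in C$ is hit by $\pi$ from that strip in essentially a unique way, and this uniqueness is the mechanism behind the lemma.

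First I would dispatch the inclusion $\pi((T_k \setminus T_{k-1}) \cap \mathbb{Z}^l) \subseteq T_k \cap C \cap \mathbb{Z}^l$, which is the easier direction. Given $x \in (T_k \setminus T_{k-1}) \cap \mathbb{Z}^l$, I write $x_i = q_i p_i + z_i$ by Euclidean division, so that $q_i \geq 0$ (since $x$ lies in the first orthant) and $z_i \in \{0,\ldots,p_i-1\}$. Then $\pi(x) = z$ belongs to $C$, and the identity $L(z) = L(x) - \sum_i q_i$ combined with $L(x) \leq k - \sum_i 1/p_i$ and $q_i \geq 0$ places $z$ in $T_k$.

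For the reverse inclusion, I fix $z \in T_k \cap C \cap \mathbb{Z}^l$, set $S := L(z)$, and search for nonnegative integers $q_1,\ldots,q_l$ such that $x_i := q_i p_i + z_i$ lies in $T_k \setminus T_{k-1}$. Since $L(x) = Q + S$ with $Q := \sum_i q_i$, this membership is equivalent to $Q$ lying in the half-open interval of length $1$
\[
I \;:=\; \bigl(\,k - 1 - \sum_i 1/p_i - S,\ k - \sum_i 1/p_i - S\,\bigr].
\]
The unique integer $Q \in I$ is $Q = \lfloor k-1-\sum_i 1/p_i - S \rfloor + 1$, and the bound $S \leq k - \sum_i 1/p_i$ (guaranteed by $z \in T_k$) forces the upper endpoint of $I$ to be $\geq 0$, hence $Q \geq 0$. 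Choosing any weak composition $q_1 + \cdots + q_l = Q$ (say $q_1 = Q$ and $q_i = 0$ for $i > 1$) produces the desired lattice point $x$ with $\pi(x) = z$.

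The only place requiring care, and what I would flag as the main obstacle, is verifying $Q \geq 0$ when $z$ lies exactly on the boundary hyperplane $H_k$. There $S = k - \sum_i 1/p_i$, so $I$ collapses to $(-1,0]$ and $Q = 0$, forcing $x = z$; one must then confirm that $x$ is not already in $T_{k-1}$, which is immediate from $L(z) = k - \sum_i 1/p_i > k - 1 - \sum_i 1/p_i$. Once this corner case is handled, the rest of the argument is routine bookkeeping from Euclidean division.
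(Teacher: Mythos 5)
Your proof is correct and follows essentially the same route as the paper: the forward inclusion by Euclidean division, and the reverse inclusion by locating the unique nonnegative integer $Q$ (the paper's $r$) in the length-one interval determined by $H_{k-1}$ and $H_k$ and distributing it arbitrarily among the coordinates. If anything, you are more explicit than the paper about why $Q\geq 0$, which the paper simply asserts.
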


\begin{proof}
 The inclusion  
 $$
 \pi \left ( (T_k\setminus T_{k-1})\cap \mathbb{Z}^l  \right ) \subset T_k\cap C \cap \mathbb{Z}^l
 $$
 is obvious. To prove the reverse inclusion let $(z_1,z_2,\dots,z_l)$ be a point from $T_k \cap C \cap \mathbb{Z}^l$, 
 and let $r$ be the unique non-negative integer satisfying
\begin{equation}\label{e:rremainder}
\frac{N_0(k)}{p_1p_2\dots p_l} \geq r+ \sum _{i=1}^l \frac{z_i}{p_i} > \frac{N_0(k-1)}{p_1p_2\dots p_l}.
\end{equation}
Let $r_1,\dots, r_l$ be non-negative numbers such that $r= r_1 + \cdots + r_l$. 
Define $x_i=z_i+r_ip_i$, $i=1,\dots,l$. 
Then $\pi (x_1,x_2,\dots,x_l)=(z_1,z_2,\dots,z_l)$. It follows from (\ref{e:rremainder}) that 
$$
\frac{N_0(k)}{p_1p_2\dots p_l} \geq \sum_{i=1}^l\frac{x_i}{p_i} > \frac{N_0(k-1)}{p_1p_2\dots p_l}.
$$
Hence $(x_1,x_2,\dots,x_l)\in T_k \setminus T_{k-1}$ as required.
\end{proof}

\begin{Lemma}\label{l:cov2}
Let $k\geq k'$ be two elements from $\{1,2,\dots,l-2 \}$ with $N_0(k)\geq N_0(k')>0$. 
Let $(x_1,x_2,\dots,x_l) \in (T_k \setminus T_{k-1})\cap \mathbb{Z}^l$. Then  
$\pi(x_1,x_2,\dots,x_l) \in (T_{k'} \setminus T_{k'-1})\cap C \cap  \mathbb{Z}^l$
if and only if 
$$
\sum _{i=1}^l\left \lfloor \frac{x_i}{p_i}\right \rfloor =k-k'.
$$  
\end{Lemma}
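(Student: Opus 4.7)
The plan is to exploit the identity
$$
N_0(m) - N_0(m-a) \;=\; a\,p_1 p_2 \cdots p_l,
$$
valid for any integer $a$, which lets one transport slab-membership conditions across different indices by integer shifts. First I would perform Euclidean division, writing $x_i = r_i p_i + z_i$ with $r_i = \lfloor x_i / p_i \rfloor \geq 0$ (non-negativity uses $x_i \geq 0$, since $T_k$ sits in the first orthant) and $z_i \in \{0, 1, \dots, p_i - 1\}$. By construction $(z_1, \dots, z_l) = \pi(x_1, \dots, x_l)$, and one has the key identity
$$
\sum_{i=1}^l \frac{x_i}{p_i} \;=\; \sum_{i=1}^l \frac{z_i}{p_i} + s, \qquad s := \sum_{i=1}^l \left\lfloor \frac{x_i}{p_i} \right\rfloor.
$$

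Next I would rewrite the hypothesis $(x_1, \dots, x_l) \in T_k \setminus T_{k-1}$ as the double inequality
$$
\frac{N_0(k-1)}{p_1 \cdots p_l} \;<\; \sum_{i=1}^l \frac{x_i}{p_i} \;\leq\; \frac{N_0(k)}{p_1 \cdots p_l},
$$
substitute the key identity into the middle, and apply the shift identity with $a = s$ to convert this into
$$
\frac{N_0(k-1-s)}{p_1 \cdots p_l} \;<\; \sum_{i=1}^l \frac{z_i}{p_i} \;\leq\; \frac{N_0(k-s)}{p_1 \cdots p_l}.
$$
This is exactly the statement that $\pi(x_1, \dots, x_l) \in T_{k-s} \setminus T_{k-s-1}$.

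To conclude, note that $N_0$ is strictly increasing in its argument with constant increment $p_1 \cdots p_l$, so the half-open intervals $\bigl(N_0(m-1)/(p_1 \cdots p_l),\, N_0(m)/(p_1 \cdots p_l)\bigr]$ are pairwise disjoint; hence, among the nonempty slabs $T_m \setminus T_{m-1}$, the point $\pi(x)$ lies in at most one. Combining this with the previous display yields $\pi(x_1,\dots,x_l) \in T_{k'} \setminus T_{k'-1}$ if and only if $k' = k - s$, that is, $s = k - k'$. I do not foresee any real obstacle: the argument is essentially a bookkeeping reduction to the arithmetic of $N_0$ and of the floor function, the only minor care being for the corner case $\pi(x) = 0$, where the unique index $k'_0$ with $N_0(k'_0-1) \leq 0 < N_0(k'_0)$ plays the role of $k - s$ and is readily verified to satisfy the required equality.
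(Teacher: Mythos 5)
Your proof is correct and follows essentially the same route as the paper's: Euclidean division $x_i = z_i + r_i p_i$, the identity $\sum x_i/p_i = \sum z_i/p_i + s$, and the arithmetic of $N_0(m) - N_0(m') = (m-m')p_1\cdots p_l$. Your packaging (show $\pi(x)$ always lands in the slab $T_{k-s}\setminus T_{k-s-1}$ and then invoke disjointness of the slabs) is a slightly cleaner way to get both directions at once than the paper's separate squeeze argument, but the substance is identical.
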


\begin{proof}

Let $(z_1,z_2,\dots,z_l)=\pi (x_1,x_2,\dots,x_l)$. Then there exist non-negative integers $r_1,r_2,\dots, r_l$ such that 
$x_i = z_i+r_ip_i$, $i=1,2,\dots,l$. Hence, 
\begin{equation}\label{e:cov1}
\sum _{i=1}^l \left \lfloor \frac{x_i}{p_i} \right \rfloor =\sum _{i=1} r_i.
\end{equation}
Then as in the proof of Lemma \ref{l:cov1}, we have that 
\begin{equation}\label{e:cov2}
\frac{N_0(k)}{p_1p_2\dots p_l} \geq \sum _{i=1}^l \frac{x_i}{p_i} > \frac{N_0(k-1)}{p_1p_2\dots p_l}.
\end{equation}
Suppose now $(z_1,z_2,\dots,z_l) \in (T_{k'} \setminus T_{k'-1})\cap C \cap  \mathbb{Z}^l$. Then 
\begin{equation}\label{e:cov3}
\frac{N_0(k')}{p_1p_2\dots p_l} \geq \sum _{i=1}^l \frac{z_i}{p_i} > \frac{N_0(k'-1)}{p_1p_2\dots p_l}.
\end{equation}
Combining  (\ref{e:cov2}) with  (\ref{e:cov3}), we see that 
$$
k-k'-1<\sum _{i=1}^l \frac{x_i- z_i}{p_i}< k-k'+1.
$$
Note that the middle term in the above inequality is an integer, and therefore, 
$$
\sum _{i=1}^l r_i =k-k'.
$$
The desired equality now follows from (\ref{e:cov1}).

Conversely, assume that $\sum_{i=1}^l r_i=k-k'$. By (\ref{e:cov2})  
$$
\frac{N_0(k)}{p_1p_2\dots p_l} \geq k-k' + \sum_{i=1}^l \frac{z_i}{p_i} > \frac{N_0(k-1)}{p_1p_2\dots p_{l-1}}.
$$
Hence
$$
\frac{N_0(k')}{p_1p_2\dots p_l} \geq \sum_{i=1}^l \frac{z_i}{p_i} > \frac{N_0(k'-1)}{p_1p_2\dots p_{l-1}},
$$
which implies $(z_1,z_2,\dots,z_l) \in (T_{k'} \setminus T_{k'-1})\cap C \cap  \mathbb{Z}^l$.

\end{proof}

\vspace{1cm}

\begin{proof}[Proof of Theorem \ref{theo:mainmore}]

Proofs of parts 1-3 are similar to those in Theorem \ref{theo:main}. 
Part 5 is a direct consequence of Equation \ref{e:delcompgen}. 
Indeed, plugging $n=p_1p_2\dots p_l\sum_{i=1}^l\frac{x_i}{p_i}$ in (\ref{e:delcompgen}), we see that
\begin{align*}
\Delta (n) &= 1+ \sum_{i=1}^l \frac{x_i}{p_i}- \sum_{i=1}^l f\left ( \frac{-x_i}{p_i}\right ) \\
&= 1- \sum_{i=1}^l \left \lceil -\frac{x_i}{p_i}\right \rceil  \\
&=1+\sum_{i=1}^l \left \lfloor \frac{x_i}{p_i}\right \rfloor \qquad \text{by}\ (\ref{d:f}).
\end{align*}
What is left is to show that $\Delta (n)$ ($n\leq N_0(l-2)$) attains all its positive values on $G$. 
Note that 
$$
\kappa  (p_1,p_2,\dots,p_l) =\left | \sum_{n=0}^\infty \min \{0, \Delta (n) \} \right | 
=\left | \sum_{n=0}^{N_0} \min \{0, \Delta (n) \} \right | =\left | \sum_{n=0}^{N_0} \max \{0, \Delta (n) \} \right |,
$$
where the second and third equalities are due to parts $2$ and $3$ respectively. 
Define
$$
\kappa ' (p_1,p_2,\dots,p_l):=\sum_{n\in G \cap [0,N_0]} \Delta (n).
$$
By part $6$, we have 
$$
\kappa '(p_1,p_1,\dots,p_l) \leq \kappa (p_1,p_1,\dots,p_l).
$$ 
Obviously, this inequality is strict if  $\Delta (n)$ attains a positive value outside of the semigroup $G$. 
We prove that this is not the case.

Using its minimal generating set, we represent the elements of $G$ by $l$-tuples as follows. 
Let $\phi : \mb{Z}^l \rightarrow G$ denote the map defined by 
$$
\phi(x_1,\dots, x_l) = p_1p_2\dots p_l \sum _{i=1}^l \frac{x_i}{p_i}.
$$
Clearly, $\phi$ is a finite-to-one map. 
Let $R(x_1,x_2,\dots,x_l)$ denote the cardinality of the set $\phi ^{-1}(\phi(x_1,x_2,\dots,x_l))$. 
We are interested in the role that $R(x_1,x_2,\dots,x_l)$ plays in the computation of $\kappa'$, rather 
than its actual value.  
\begin{eqnarray*}
\kappa ' (p_1,p_2,\dots,p_l) &=& \sum _{(x_1,x_2,\dots,x_l) \in T_{l-2} \cap \mathbb{Z}^l} 
\frac{\Delta (\phi (x_1,x_2,\dots,x_l))}{R(x_1,x_2,\dots,x_l) }\\
&=&
\underset{N_0(k) >0}{\sum _{k=1 }^{l-2}}\sum _{(x_1,x_2,\dots,x_l) \in (T_{k} \setminus T_{k-1}) \cap \mathbb{Z}^l} 
\frac{1}{R(x_1,x_2,\dots,x_l) } \left ( 1+\sum_ {i=1}^l \left \lfloor \frac{x_i}{p_i} \right \rfloor\right ). 
\end{eqnarray*}
Here, the second equality is a consequence of part \ref{i:theo6}. 
We need to count the elements appearing in the second sum.

We know from Lemma \ref{l:cov1} that 
$\pi((T_{k} \setminus T_{k-1}) \cap \mathbb{Z}^l)= T_{k} \cap C \cap \mathbb{Z}^l$, for $k=1,\dots,l-2$. 
Moreover, two $l$-tuples $(x_1,x_2,\dots,x_l)$, $(y_1,y_2,\dots,y_l)\in (T_{k} \setminus T_{k-1}) \cap \mathbb{Z}^l$ 
are mapped to the same element by $\pi$ if and only if  they are mapped onto the same element by $\phi$. 
These observations combined with Lemma \ref{l:cov2} gives 
\begin{align*}
\kappa ' (p_1,p_2,\dots,p_l) &= \underset{N_0(k) >0}{\sum _{k=1 }^{l-2}} \ \underset{N_0(j) >0}{\sum _{j=1 }^{k}} \ 
\sum _{(z_1,z_2,\dots,z_l) \in (T_{j} \setminus T_{j-1}) \cap C \cap \mathbb{Z}^l} k-j+1 \\
&= \underset{N_0(k) >0}{\sum _{k=1 }^{l-2}} \underset{N_0(j) >0}{\sum _{j=1 }^{k}} (k-j+1)A_j \qquad (\text{by}\ (\ref{d:Ak})).
\end{align*}
Note that the above equation is valid even without the restrictions $N_0(k)\geq N_0(j)>0$, since we 
force $A_j=0$, if $N_0(j)<0$. 
Changing the order of the summation and the indices accordingly give 
\begin{eqnarray*}
\kappa ' (p_1,p_2,\dots,p_l) &= & \sum _{j=1 }^{l-2}  \sum _{k=j }^{l-2} (k-j+1)A_j = \sum _{j=1 }^{l-2} \sum _{r=1 }^{l-1-j} rA_j\\
&=&  \sum _{j=1 }^{l-2} \frac{(l-j-1)(l-j)}{2} A_j.
\end{eqnarray*}
Hence by Theorem \ref{theo:genmono}, we have $\kappa ' (p_1,p_2,\dots,p_l) = \kappa (p_1,p_2,\dots,p_l)$. 

\end{proof}

Next we establish the monotonicity of $\kappa$ under the addition of one more singular fiber.
\begin{Proposition}\label{prop:kappa2}
Let $(p_1,p_2,\dots,p_l,p_{l+1})$ be an $(l+1)$-tuple of pairwise relatively prime  integers with $1<p_1<p_2<\dots<p_l<p_{l+1}$. Then
$$
\kappa(p_1,p_2,\dots,p_l)\leq \kappa (p_1,p_2,\dots,p_l,p_{l+1}).
$$
\end{Proposition}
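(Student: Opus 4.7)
The plan is to exploit the lattice-point formula of Theorem \ref{theo:genmono}, which expresses both sides as nonnegative integer combinations of the counts $A_j^{(l)}=\#\bigl((T_j^{(l)}\setminus T_{j-1}^{(l)})\cap C_{p_1,\dots,p_l}\cap\mathbb{Z}^l\bigr)$. First I would rewrite
\[
\kappa(p_1,\dots,p_l)=\sum_{j=1}^{l-2}\frac{(l-j-1)(l-j)}{2}A_j^{(l)},\qquad \kappa(p_1,\dots,p_{l+1})=\sum_{k=1}^{l-1}\frac{(l-k)(l+1-k)}{2}A_k^{(l+1)},
\]
and aim to inject each contribution on the left into the right.

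The natural candidate is $\iota:\mathbb{Z}^l\hookrightarrow\mathbb{Z}^{l+1}$, $\iota(x_1,\dots,x_l)=(x_1,\dots,x_l,0)$, which is clearly injective and carries $C_{p_1,\dots,p_l}$ into $C_{p_1,\dots,p_{l+1}}$. The key step is to identify which slab of the $(l+1)$-case contains $\iota(x)$. Given $x\in(T_j^{(l)}\setminus T_{j-1}^{(l)})\cap C_{p_1,\dots,p_l}\cap\mathbb{Z}^l$, set $S=\sum_{i=1}^l x_i/p_i$; by definition $S\in\bigl(j-1-\sum_{i=1}^l 1/p_i,\,j-\sum_{i=1}^l 1/p_i\bigr]$. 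Adding $\sum_{i=1}^{l+1}1/p_i$ yields
\[
S+\sum_{i=1}^{l+1}\frac{1}{p_i}\in\Bigl(j-1+\frac{1}{p_{l+1}},\ j+\frac{1}{p_{l+1}}\Bigr]\subset(j-1,\,j+1],
\]
so there is a unique $k\in\{j,j+1\}$ with $\iota(x)\in T_k^{(l+1)}\setminus T_{k-1}^{(l+1)}$. Nonemptiness of this target slab (hence $N_0^{(l+1)}(k)>0$) is witnessed by $\iota(x)$ itself.

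Finally I would compare the multiplicities. The coefficient attached to $\iota(x)$ in $\kappa(p_1,\dots,p_{l+1})$ is $\frac{(l-k)(l-k+1)}{2}$. When $k=j$ this equals $\frac{(l-j)(l-j+1)}{2}$, which exceeds $\frac{(l-j-1)(l-j)}{2}$ by $l-j>0$; when $k=j+1$ it equals $\frac{(l-j-1)(l-j)}{2}$, matching the $l$-case weight exactly. Thus in either branch the contribution of $x$ to $\kappa(p_1,\dots,p_l)$ is dominated by that of $\iota(x)$ to $\kappa(p_1,\dots,p_{l+1})$, and since $\iota$ is injective, summing over $j$ and over all such $x$ gives the desired inequality $\kappa(p_1,\dots,p_{l+1})\geq\kappa(p_1,\dots,p_l)$.

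The only real subtlety is the slab identification: one must verify that appending a zero coordinate shifts the slab index by at most one, which boils down to the elementary fact $0<1/p_{l+1}<1$. Once that is established, the multiplicity comparison is a one-line algebraic check and no further work is required.
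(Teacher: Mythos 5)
Your proof is correct, but it takes a genuinely different route from the paper. The paper argues directly on the difference functions: writing $\Delta$ and $\Delta'$ for the $l$- and $(l+1)$-tuples in the quasi-polynomial form (\ref{e:delcompgen}), it claims $\Delta'(n)-\Delta(n)=\frac{n}{p_1\cdots p_{l+1}}-\frac{n}{p_1\cdots p_l}-f\bigl(\frac{np_{l+1}'}{p_{l+1}}\bigr)\leq 0$ whenever $\Delta(n)\leq 0$, so the negative part of the sum only grows. That is a one-line argument, but as written it silently assumes the terms $f(np_i'/p_i)$ for $i\leq l$ are unchanged when $p_{l+1}$ is appended, which is not literally true since each residue $p_i'$ is recomputed modulo $p_i$ with the extra factor $p_{l+1}$ in the defining congruence; repairing this requires redistributing the negative values (e.g.\ via the Chinese remainder bookkeeping) rather than comparing them at the same $n$. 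Your argument routes everything through Theorem \ref{theo:genmono}, where that bookkeeping has already been done once and for all, and reduces the claim to the injection $\iota(x)=(x,0)$ together with the slab shift $k\in\{j,j+1\}$ coming from $0<1/p_{l+1}<1$ and the weight comparison $\frac{(l-k)(l+1-k)}{2}\geq\frac{(l-j-1)(l-j)}{2}$. This costs more setup but every step is airtight, and it yields the slightly finer statement that each individual lattice point contributes at least as much after a fiber is added. The only points worth making explicit are that the target index $k$ stays in the admissible range $\{1,\dots,l-1\}$ (it does, since $j\leq l-2$) and that the remaining, non-image lattice points of the $(l+1)$-dimensional count contribute nonnegatively, which is clear since all weights $A_k$ and coefficients are nonnegative.
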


\begin{proof}
Let $\Delta$ and $\Delta '$ be the difference terms corresponding to the $l$-tuple $(p_1,p_2,\dots,p_l)$ 
and the $(l+1)$-tuple $(p_1,p_2,\dots,p_l,p_{l+1})$ respectively. Let $n$ be an integer with $\Delta (n) \leq 0$. 
We claim that $\Delta ' (n) \leq \Delta (n)$. Writing the difference terms as in (\ref{e:deltacompact}), we have 
$$
\Delta ' (n) - \Delta (n)=\frac{n}{p_1p_2\dots p_lp_{l+1}}-\frac{n}{p_1p_2\dots p_l}-f \left ( \frac{np_{l+1}'}{p_{l+1}}  \right ) \leq 0.
$$
\end{proof}

We state the monotonicity of kappa under the natural partial order of $l$-tuples, generalizing Proposition \ref{prop:finite}. 
\begin{Proposition}\label{prop:kappa3}
Suppose $(p_1,p_2,\dots,p_l)\geq (q_1,q_2,\dots,q_l)$, then $\kappa(p_1,p_2,\dots,p_l))\geq \kappa(q_1,q_2,\dots,q_l)$.
\end{Proposition}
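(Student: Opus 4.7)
The plan is to use Theorem~\ref{theo:genmono} to reduce the question to a purely geometric statement about lattice point counts in tetrahedra, and then settle it by passing from shell counts to cumulative counts via Abel summation followed by an elementary inclusion argument. This will generalize the $l=3$ argument of Proposition~\ref{prop:finite}. Throughout, I write $T_j(p)$, $C(p)$, $A_j(p)$ for the tetrahedra, cube, and shell counts from Section~\ref{s:more} corresponding to the tuple $p=(p_1,\dots,p_l)$, and similarly for $q$.

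First, I would recast the kappa formula by defining the cumulative counts
$$
B_j(p):=\#\bigl(T_j(p)\cap C(p)\cap \mathbb{Z}^l\bigr), \qquad j=0,1,\dots,l-2,
$$
so that $A_j(p)=B_j(p)-B_{j-1}(p)$ with $B_0(p)=0$ (since $N_0(0)<0$, making $T_0$ empty). An Abel summation applied to Theorem~\ref{theo:genmono}, using the identity
$$
\frac{(l-j-1)(l-j)}{2}-\frac{(l-j-2)(l-j-1)}{2}=l-j-1,
$$
together with $c_{l-2}=1=l-(l-2)-1$, produces the cleaner formula
$$
\kappa(p_1,\dots,p_l)=\sum_{j=1}^{l-2}(l-j-1)\,B_j(p),
$$
in which every coefficient is a positive integer.

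The main step is then the monotonicity $B_j(q)\leq B_j(p)$ for every $j$, under the hypothesis $(p_1,\dots,p_l)\geq(q_1,\dots,q_l)$. Two elementary inclusions suffice. First, $C(q)\subseteq C(p)$ because each factor $[0,q_i-1]$ sits inside $[0,p_i-1]$. Second, $T_j(q)\subseteq T_j(p)$: given $x\in\mathbb{R}_{\geq 0}^l$, the coordinatewise bounds $x_i/p_i\leq x_i/q_i$ and $1/p_i\leq 1/q_i$ chain together to give
$$
\sum_{i=1}^l \frac{x_i}{p_i}\leq \sum_{i=1}^l \frac{x_i}{q_i}\leq j-\sum_{i=1}^l \frac{1}{q_i}\leq j-\sum_{i=1}^l \frac{1}{p_i},
$$
so $x\in T_j(q)$ implies $x\in T_j(p)$. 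Intersecting and restricting to $\mathbb{Z}^l$ yields $B_j(q)\leq B_j(p)$. Multiplying by the nonnegative weights $l-j-1$ and summing then gives $\kappa(q)\leq \kappa(p)$.

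The main obstacle, and the reason the argument is not a one-liner, is that the individual shell counts $A_j$ are \emph{not} monotone in the tuple: as the $p_i$ grow, a lattice point may migrate from one shell $T_j\setminus T_{j-1}$ to another. Aggregating shells into the cumulative counts $B_j$ via Abel summation is precisely the device that repackages $\kappa$ as a nonnegative combination of quantities to which the obvious geometric containments $T_j(q)\subseteq T_j(p)$ and $C(q)\subseteq C(p)$ directly apply.
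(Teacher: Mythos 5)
Your proof is correct and rests on the same two ingredients as the paper's: the formula of Theorem~\ref{theo:genmono} and the containments $T_j(q)\subseteq T_j(p)$, $C(q)\subseteq C(p)$. The paper argues pointwise instead of via Abel summation --- each lattice point counted for $\kappa(q)$ migrates under the enlargement to a shell of index $j'\leq j$ and hence is counted for $\kappa(p)$ with multiplicity $\tfrac{(l-j'-1)(l-j')}{2}\geq\tfrac{(l-j-1)(l-j)}{2}$ --- which is exactly the non-monotonicity of the individual $A_j$ that your cumulative counts $B_j$ absorb; your version is a cleaner rendering of the same argument.
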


\begin{proof}
From the discussion preceding Theorem \ref{theo:genmono}, each  tetrahedron associated to 
$(p_1,p_2,\dots,p_l)$ is strictly larger than the corresponding tetrahedron associated to 
$(q_1,q_2,\dots,q_l)$. Hence the monotonicity follows from the count given in Theorem \ref{theo:genmono}. 
Indeed, every lattice point appearing in the calculation of $\kappa(p_1,p_2,\dots,p_l)$ appears also in the calculation of 
$\kappa(q_1,q_2,\dots,q_l)$ with a possibly bigger multiplicity. 
This is because of the fact that the lattice points in the smaller tetrahedra are counted with bigger multiplicity in Theorem \ref{theo:genmono}.
\end{proof}

\section{Topological Applications}
\label{s:topoappl}

In this section we discuss some of the topological applications of our work to the topology of $3$--manifolds. 
Our first task is to detect the Brieskorn spheres with trivial Heegaard-Floer homology. 
We would like to find all Brieskorn spheres which are $L$--spaces, so, we first translate the condition to being an 
$L$--space in terms of the tau function defined in (\ref{eq:rectau}).

\begin{Proposition}\label{p:lspace}
Let $Y$ be a $3$--manifold which bounds a negative definite plumbing with at most one bad vertex. Then 
$Y$ is an $L$--space if and only if its tau function is increasing.
\end{Proposition}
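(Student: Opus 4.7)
The plan is to invoke Theorem \ref{theo:nem} (which, as indicated in \cite{N}, extends to any plumbed $3$--manifold with at most one bad vertex) and translate the $L$--space condition into a purely combinatorial condition on the graded root, then into a monotonicity condition on $\tau$. Since $Y$ is an $L$--space if and only if $HF^+_{\mathrm{red}}(-Y)=0$, and since the Heegaard-Floer module is determined by $\mathbb{H}(R_\tau,\chi_\tau)$ up to degree shift, the proposition reduces to the statement that $\mathbb{H}(R_\tau,\chi_\tau)\cong \mathcal{T}^+_{(d)}$ for some $d$ if and only if $\tau$ is non-decreasing.

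For the ``only if'' direction, I would use the structural description of $\mathbb{H}(R_\tau,\chi_\tau)$ from Section \ref{s:prelim}: every branching vertex of $R_\tau$ (which encodes a local maximum of $\tau$) contributes a nonzero class to the reduced summand, while every leaf (local minimum) below the highest branching level produces a finitely supported $\mathbb{Z}[U]$--submodule of the form $\mathcal{T}^{n}_{(d')}$. Thus $\mathbb{H}(R_\tau,\chi_\tau)$ equals a single tower $\mathcal{T}^+_{(d)}$ precisely when $R_\tau$ is a single infinite ray, equivalently when the subsequence of local extrema of $\tau$ has length one. Since $\tau$ is known to be eventually strictly increasing (cf.\ the second part of Lemma \ref{lemm:mono} in the Seifert case, and the analogous statement in \cite{N} more generally), the only way for $\tau$ to have a unique local extremum is to have no local maximum at all, which forces $\Delta(n)=\tau(n+1)-\tau(n)\geq 0$ for every $n\geq 0$.

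For the converse, if $\tau$ is non-decreasing, then by the definition of local extrema (which uses strict inequalities) the only local extremum is the absolute minimum at $n=0$, where $\tau(0)=0$. The associated graded root is the single-ray graded root $[0]$, and $\mathbb{H}(R_\tau,\chi_\tau)=\mathcal{T}^+_{(0)}$, so after the degree shift $HF^+(-Y)\cong \mathcal{T}^+_{(d)}$ and $Y$ is an $L$--space.

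The only subtle point, and the main thing to be careful about, is the correspondence between the combinatorics of the tree $R_\tau$ and the $\mathbb{Z}[U]$--module structure of $\mathbb{H}(R_\tau,\chi_\tau)$, specifically that branching on the tree is in bijection with nontrivial reduced classes; this, however, is a direct consequence of the way the $U$--action is defined in Section \ref{s:prelim}, so no genuinely new input is needed.
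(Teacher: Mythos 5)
Your reduction of the statement to the combinatorics of the graded root is correct and matches the paper's intent: the reduced part of $\mathbb{H}(R_\tau,\chi_\tau)$ vanishes precisely when $R_\tau$ is a single infinite ray, which happens precisely when $\tau$ has no local maximum, i.e.\ when $\Delta(n)\geq 0$ for all $n$. That half of the argument is fine and needs no new input beyond the description of the $U$--action in Section \ref{s:prelim}.

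However, there is a genuine gap in the converse direction as you have written it. The proposition is stated for an arbitrary $3$--manifold bounding a negative definite plumbing with at most one bad vertex; such a $Y$ is a rational homology sphere and in general carries many \spinc\ structures, whereas the tau function and its graded root compute $HF^+(-Y,\mathfrak{s})$ only for the \emph{canonical} \spinc\ structure. Being an $L$--space requires $HF_{\mathrm{red}}(Y,\mathfrak{s})=0$ for \emph{every} $\mathfrak{s}$, so from ``$\tau$ is increasing'' you may only conclude that the reduced homology vanishes in the canonical \spinc\ structure. Your sentence ``the Heegaard-Floer module is determined by $\mathbb{H}(R_\tau,\chi_\tau)$ up to degree shift'' silently assumes there is a unique \spinc\ structure, which is true for Seifert homology spheres but not for the class of manifolds in the statement. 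The paper closes exactly this gap by invoking Theorem 6.3 of \cite{N}, which asserts that for plumbed $3$--manifolds with at most one bad vertex, triviality of the reduced Heegaard-Floer homology in the canonical \spinc\ structure already implies that $Y$ is an $L$--space. You need to add this (or an equivalent argument handling the non-canonical \spinc\ structures) to complete the ``if'' direction; the ``only if'' direction is unaffected, since an $L$--space has vanishing reduced homology in every \spinc\ structure, in particular the canonical one.
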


\begin{proof}
It follows from Nemethi's work that Heegaard-Floer homology in the canonical \spinc structure is given 
by the graded root associated with its tau function. In particular, this gives trivial homology if and only $\tau$ is increasing. 
Now, the proof follows from Theorem 6.3 of Nemethi \cite{N}, which states that a plumbed 3-manifold is an $L$--space if and only if 
its Heegaard-Floer homology in the canonical \spinc  structure is trivial. 
\end{proof}

It is known that the $3$-sphere and the Poincar\'e homology sphere $\varSigma (2,3,5)$ are examples of $L$-spaces. 
In fact it is conjectured that an irreducible integral homology sphere is an $L$-space if and only if 
it is homeomorphic to $S^3$, or to $\varSigma(2,3,5)$ (with either orientation). 
Here we verify this conjecture for Seifert homology spheres. 
This was observed long before by Rustamov and independently by Eftekhary, but here we give a simpler proof.

\vspace{1cm}

\begin{proof}[Proof of Theorem \ref{theo:lspace}]
In view of Proposition \ref{p:lspace}, it suffices to prove the following: If $\tau$ function of a 
Seifert homology sphere $Y:=\varSigma(p_1,p_2,\dots,p_l)$ is increasing, then either $l\leq 2$ 
(implying $Y\approx S^3$), or $l=3$ and $(p_1,p_2,p_3)=(2,3,5)$. 
That $\tau$ is increasing is equivalent to the condition that $\Delta(n)=\tau(n+1)-\tau(n) \geq 0 $. Then 
Theorem \ref{theo:mainmore} rules out the possibility that $l\geq 4$, since $\Delta (N_0)=-\Delta(0)=-1$. If $l=3$, 
Theorem \ref{theo:main} forces that $(p_1,p_2,p_3)=(2,3,5)$.
\end{proof}

\vspace{1cm}

\begin{proof}[Proof of Proposition \ref{prop:kappa}]
This is an immediate consequence of Theorem \ref{theo:nem}, and the relationship between 
Heegaard-Floer homology and the Casson invariant. More precisely, Ozsv\'ath and Szab\'o show in \cite{OS2} 
that for every integral homology sphere $Y$, 
the Heegaard-Floer homology has a decomposition of the form 
$$
HF^+(-Y)=\mathcal{T}^+_{(d)}\oplus HF_{\mathrm{red}}(-Y),
$$
where $HF_{\mathrm{red}}(-Y)$ is a finitely generated subgroup, whose Euler characteristic satisfies the following property:
\begin{equation}\label{eq:euler}
\chi(HF_{\mathrm{red}}(-Y))=\lambda(-Y)+\frac{d(-Y)}{2}.
\end{equation}
It is shown in \cite{OS1} that if $Y$ is Seifert homology sphere 
(or more generally if $Y$ bounds a negative definite plumbing with at most one bad vertex), then 
$HF^+(-Y)$ is supported only in even degrees. Hence, $\chi(HF_{\mathrm{red}}(-Y))= \mathrm{rank}(HF_{\mathrm{red}}(-Y))$ 
for every Seifert homology  sphere $Y$. 
By the discussion in Section \ref{s:prelim}, we read off this quantity from the corresponding graded root directly: 
Simply remove the longest branch, then the number of remaining vertices is the rank of $HF_{\mathrm{red}}(-Y)$. 
By Theorem \ref{theo:nem}, the graded root is determined by the tau function.

It is straightforward to verify  
$\mathrm{rank}(\mathbb{H}_{\mathrm{red}}(R_{\tau}),\chi_\tau)=\mathrm{min}_i\tau(i) + \sum_i \mathrm{max} \{-\Delta (i), 0 \}$ 
(see Corollary 3.7 of \cite{N}).
Comparing with Definition \ref{def:kappa}, we have $\kappa (p_1,\dots,p_l) = \sum_i \mathrm{max} \{-\Delta (i), 0 \}$. 
Substituting in (\ref{eq:euler}) we obtain 
$$
\kappa=\lambda(-Y)+\frac{d(-Y)}{2}-\mathrm{min}_i\tau(i).
$$
The theorem then follows from the fact that $\frac{d(-Y)}{2}-\mathrm{min}_i\tau(i)$ is the half of the degree shift term 
$(K^2+s)/4$, which is discussed in Section \ref{s:prelim}.
\end{proof}

\vspace{1cm}

\begin{proof}[Proof of Theorem \ref{theo:list}]
Using Theorem \ref{theo:main} and Nemethi's method described in Section \ref{s:prelim}, it is easy to verify Table  \ref{tab:kappa}. 
We must show that every Seifert homology sphere has $\kappa\geq 3$, except the ones given in Table \ref{tab:kappa}. 
Let $\varSigma (p_1,p_2,\dots,p_l)$ be a Seifert homology sphere that does not appear in \ref{tab:kappa}. 
Then $l\geq 3$ since only Seifert homology with less than $3$ singular fibers is $S^3$. 
Suppose $l=3$, then the triple $(p_1,p_2,p_3)$ must be greater than or equal to one of the following triples: 
$(3,5,7)$, $(3,4,7)$, $(3,5,9)$, $(2,7,9)$, $(2,5,11)$, $(2,5,13)$, $(2,5,19)$, $(2,3,19)$. 
These triples are the immediate successors of the triples appearing in the table. 
It is easy to check that all of these triples have $\kappa \geq 3$, so by monotonicity we are done in the case of three singular fibers. 
For four and more  singular fibers, we have $\kappa(p_1,p_2,\dots,p_l)\geq \kappa (2,3,5,7) \geq \kappa (3,5,7)\geq 4$.
\end{proof}

We are ready to prove Theorem \ref{theo:finite}, which states that there are only finitely many Seifert homology
spheres with a prescribed $\kappa$, and therefore, a prescribed Heegaard-Floer homology.

\begin{proof}[Proof of Theorem \ref{theo:finite}]

We already know that when $\kappa =0$, there are only two possible Seifert homology spheres, namely,
$S^3$, or the Poincar\'e homology sphere. 
For the general case, it is enough to show that $\kappa$ is not constant on any infinite family of Seifert homology spheres 
each of which contains three or more singular fibers. 

We begin with families of Brieskorn spheres. Let $\{(p_n,q_n,r_n):n=1,\dots, \infty\}$ be an infinite family of triples.  
Since $p_n<q_n<r_n$, the last entry $r_n$ can not stay constant. 
Hence, after passing to a subsequence we may assume that $(p_n,q_n,r_n)$ is increasing with $r_n\to \infty$. 
This implies that $\kappa(p_n,q_n,r_n)\to \infty$ by Proposition \ref{prop:finite} and its proof. 
In particular $\kappa(p_n,q_n,r_n)$ is not constant.
Suppose now that we have infinite family of Seifert homology spheres 
$(p_{1,n},p_{2,n},\dots,p_{l(n),n})$ with $l(n)\geq 3$ for all $n$. 
Projecting to the last three coordinates and using Proposition \ref {prop:kappa2}, we get an infinite family of triples 
$(p_n,q_n,r_n)$ such that $\kappa (p_{1,n},p_{2,n},\dots,p_{l(n),n})\geq \kappa(p_n,q_n,r_n)$. 
As before, we may assume that $\kappa(p_n,q_n,r_n)\to \infty$, and hence, $\kappa (p_{1,n},p_{2,n},\dots,p_{l(n),n})\to \infty$. 

To finish our argument we need to know that every positive integer can be realized as $\kappa$ 
of some Seifert homology sphere. Indeed, one can directly verify from Theorem \ref{theo:main} that $\kappa(2,3,6k+1)=k$.
Hence, the proof is complete. 
 

\end{proof}

\section{Weakly Elliptic Brieskorn Spheres}
\label{s:weakly elliptic}

In this section we use our findings to characterize all weakly elliptic Brieskorn spheres $\varSigma(p,q,r)$ 
in terms of their defining integers $1< p < q <r$. We begin with introducing a new concept on numerical semigroups.

\begin{Definition}
Let $G$ be a numerical semigroup and let $n_0\in \N-G$ be a positive integer. Then $G$ is said to  
{\em alternate with respect to $n_0$}, if for every $x,y\in G$ such that $x<y<n_0$, there exists $z\in G$ satisfying 
$x< n_0 - z < y$. 
\end{Definition}

Note that if $G$ is generated by a single element $a$, then $G$ alternates with respect to any $n_0\in \N - G$.
This notion gets more interesting if there are more than one generators. Clearly, in this case, there are only finitely many 
possibilities for $n_0$. 

\begin{Lemma}
Let $G=G(a,b,c)$ be a numerical semigroup minimally generated by three relatively prime positive integers $a+1<b<c$, 
and let $n_0$ be a number from $\N - G$.
Then $G$ alternates with respect to $n_0$ if and only if $a < n_0 < b<c$. 
\end{Lemma}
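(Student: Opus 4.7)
The plan is to prove both directions by exploiting the structural fact that, since $a<b<c$ are minimal generators of $G$, the elements of $G$ lying in $[0,b)$ are exactly the nonnegative multiples of $a$. In particular $G\cap(0,a)=\emptyset$, and $a\nmid b$ (otherwise $b$ would be redundant as a generator).

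For the direction ($\Leftarrow$), I will assume $a<n_0<b$. Since $n_0\notin G$, I can write $n_0=ka+r$ with $k\geq 1$ and $0<r<a$. The elements of $G$ strictly below $n_0$ are exactly $0,a,\ldots,ka$. Given any pair $x=ia<y=ja$ with $j\leq k$, I will choose $z=(k-i)a\in G$; then $n_0-z=ia+r$, and the inequalities $ia<ia+r<(i+1)a\leq ja$ give $x<n_0-z<y$ directly.

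For the direction ($\Rightarrow$), I plan to prove the contrapositive: if $n_0\geq b$ (and $n_0\notin G$), then alternation fails. Inside the window $(n_0-a,n_0]$ of $a$ consecutive integers, let $r_0$ be the unique multiple of $a$ and $r_b$ the unique representative of $b$ modulo $a$. The plan is to show both lie in $G\cap(n_0-a,n_0)$: one has $r_0\geq a$, while $r_b\geq b$ because $n_0\geq b$, so $r_b\in b+a\mathbb{N}\subset G$; neither equals $n_0$ because $n_0\notin G$; and $r_0\neq r_b$ because $a\nmid b$. Setting $\{x,y\}=\{r_0,r_b\}$ with $x<y$, any candidate $z$ must satisfy $z\in(n_0-y,n_0-x)\subset(0,a)$, but $G\cap(0,a)=\emptyset$, so no such $z$ exists and the pair $(x,y)$ blocks alternation.

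The only subtle point is the degenerate range $n_0<a$, where $\{0\}$ is the only element of $G$ below $n_0$ and the alternation condition is vacuously satisfied. I will read the lemma as implicitly requiring a non-trivial witness pair $x<y$ in $G\cap[0,n_0)$, whose existence already forces $n_0>a$; modulo this reading, the two arguments above combine to give the stated equivalence. The main technical ingredient is the residue-class construction of the blocking pair $(r_0,r_b)$; everything else reduces to a short calculation.
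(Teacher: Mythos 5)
Your proof is correct, and while your ($\Leftarrow$) direction is essentially the paper's, your ($\Rightarrow$) direction takes a genuinely different route. For ($\Leftarrow$) the paper simply says one may replace $G(a,b,c)$ by $G(a)$; this is exactly your observation that $G\cap[0,b)$ consists precisely of the nonnegative multiples of $a$, made concrete by the choice $z=(k-i)a$. For ($\Rightarrow$) the paper argues by induction on $n_0$: it takes the two largest elements $x<y$ of $G$ below $n_0$, shows the witness $z$ must equal $a$, deduces that $G$ also alternates with respect to $n_0-a$, invokes the induction hypothesis $a<n_0-a<b$, and then rules out $n_0>b$; the hypothesis $b>a+1$ enters through the base case $n_0=a+1$. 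You instead prove the contrapositive directly: for $n_0>b$ you exhibit an explicit blocking pair, namely the unique multiple of $a$ and the unique integer congruent to $b$ modulo $a$ in the window $(n_0-a,n_0)$ --- both lie in $G$ (the latter because $n_0\geq b$ forces it to be at least $b$), they are distinct since $a\nmid b$, and any witness $z$ would have to satisfy $0<z<a$, which is impossible because $a$ is the smallest nonzero element of $G$. Your argument is shorter, avoids the induction, and uses only that $a$ is the least nonzero element of $G$ and $b$ the least element not divisible by $a$, so it applies verbatim to any numerical semigroup; the paper's induction is more delicate by comparison. Finally, your explicit caveat about the vacuous range $n_0<a$ (where alternation holds for trivial reasons, so the stated equivalence must be read as requiring a genuine witness pair) is, if anything, more careful than the paper's ``there is nothing to prove''; both proofs share this degenerate case, which is harmless in the application since it corresponds to $0<N_0<pq$ and is handled separately there.
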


\begin{proof}
($\Leftarrow$) Our claim is immediately proven once we replace $G(a,b,c)$ by $G(a)$.

($\Rightarrow$) Let $n_0 \in \N - G$ be a positive integer with respect to which $G$ alternates. 
Clearly, if $n_0<a$, then there is nothing to prove. 
We proceed by induction on $n_0$, the base case being $n_0 = a+1$. Notice that our claim is trivially 
true in the base case. 

Assume now that if $n_0'<n_0$ and $G$ is alternating with respect to $n_0'$, then $a<n_0'<b<c$.
Suppose $x<y$ are from $G$ and they are the largest elements of $G$ that are less than $n_0$. 
Thus, there exists $z\in G$ such that $x < n_0-z < y<n_0$. It follows that  $x+z < n_0 < y+z < n_0 +z$, hence $x+z =y$. 
Notice that $z$ has to be the smallest element $a$ of $G$, otherwise, for $w\in G$ with $w<z$ we see that 
$x < w+x < y$, contradicting with the maximality of $x$.
 
We claim that $G$ alternates with respect to $n_0'=n_0-z$. Indeed, $n_0-z \notin G$ and if $u < v$ are two elements 
from $G$ such that $u < v < n_0 -z$, then $u+z < v+z < n_0$, hence there exists $w\in G$ such that $u+z+w < n_0 < v+z +w$. 
Our claim follows from this. 

Now, by induction hypothesis we have that $a < n_0 - z < b<c$. But $x< n_0-z$, so $x$ must be a multiple of $a$.
Then $y=x+z$ is a multiple of $a$. If $n_0 < b+z < y+z$, then $x<b < n_0$. Since $x$ is the second largest element of $G$ 
that is less than $n_0$, and since $b$ is not a multiple of $a$, we obtained a contradiction. Therefore, $y+z < b+z$, or $y<b$. 
This implies that $n_0 < b$ and the proof is finished. 

\end{proof}

\begin{Corollary}\label{C:first criterion}
Let $1<p<q<r$ be three relatively prime integers. Then the Brieskorn sphere $\varSigma(p,q,r)$ 
is weakly elliptic if and only if $N_0 < pr$, where $N_0 = pqr-pq-pr-qr$. 
\end{Corollary}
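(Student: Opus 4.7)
The plan is to translate being weakly elliptic into a combinatorial alternation property of the sequence $\tau$, match this to the alternation of the numerical semigroup $G=G(pq,pr,qr)$ with respect to $N_0$, and apply the preceding lemma. I assume throughout that $(p,q,r)\ne(2,3,5)$, so that $N_0>0$; for $(2,3,5)$, Theorem~\ref{theo:main} gives $\Delta\ge 0$ everywhere, hence trivial reduced Heegaard-Floer homology, which is not weakly elliptic.

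First I would use Theorem~\ref{theo:nem} to identify $HF^+(-\varSigma(p,q,r))$, up to a degree shift, with the graded $\Z[U]$-module $\mathbb{H}(R_\tau,\chi_\tau)$. This module has the form $\mathcal{T}^+_{(d)}\oplus(\Z_{(d)})^l$ with $l\ge 1$ exactly when every local minimum of $\chi_\tau$ takes the same value and every local maximum sits exactly one unit above. Since $\tau(0)=0$ and $\Delta(0)=1$, this forces the minimum to be $0$ and the sequence $\tau$ to satisfy $\tau(n)\in\{0,1\}$ for all $n$; equivalently, the nonzero values of $\Delta$ must alternate $+1,-1,+1,-1,\ldots$ starting with $+1$ at $n=0$.

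Next I would pin down the positions of the $\pm 1$-values of $\Delta$ using Theorem~\ref{theo:main}. By part~(5), the positions of $+1$ inside $[0,N_0]$ form the set $G\cap[0,N_0]$, listed as $0=g_1<g_2<\cdots<g_k$. The symmetry $\Delta(n)=-\Delta(N_0-n)$ of part~(3) places the positions of $-1$ at $N_0-g_k<\cdots<N_0-g_1=N_0$. The condition $\tau(n)\in\{0,1\}$ therefore reads
\[
g_i<N_0-g_{k+1-i}<g_{i+1}\qquad\text{for every }1\le i<k,
\]
which, by interpolating through intermediate elements of $G$, is equivalent to: for all $x<y$ in $G$ with $y<N_0$ there exists $z\in G$ with $x<N_0-z<y$. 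Part~(5) also guarantees $N_0\notin G$ (since $\Delta(N_0)=-1$), so $N_0\in\N-G$, and the condition above is exactly the statement that $G$ alternates with respect to $N_0$.

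Finally I would apply the preceding lemma to $G=G(pq,pr,qr)$: noting that $pq+1<pr<qr$ holds whenever $1<p<q<r$ are pairwise coprime, alternation is equivalent to $pq<N_0<pr$. The residual case $N_0<pq$ falls outside the lemma's scope but is transparent: then $G\cap[0,N_0]=\{0\}$, so $\tau$ has a unique $+1$ at $n=0$ and a unique $-1$ at $n=N_0$, stays in $\{0,1\}$, and $\varSigma(p,q,r)$ is weakly elliptic with $l=1$; moreover $N_0<pq<pr$ in this regime. Since $N_0\notin G$ forbids $N_0=pq$, the two subcases combine into the single criterion $N_0<pr$. I expect the main obstacle to lie in the first step, namely making rigorous the passage from the module-theoretic shape $\mathcal{T}^+_{(d)}\oplus(\Z_{(d)})^l$ to the concrete condition $\tau(n)\in\{0,1\}$ on the underlying sequence; everything else is direct bookkeeping with the results of Sections~\ref{s:prelim} and~\ref{s:andel}.
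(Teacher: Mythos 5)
Your proposal is correct and follows essentially the same route as the paper: translate weak ellipticity into the alternation of the nonzero values of $\Delta$ on $[0,N_0]$, identify this (via parts (3) and (5) of Theorem \ref{theo:main}) with the alternation of $G(pq,pr,qr)$ with respect to $N_0$, apply the preceding lemma to get $pq<N_0<pr$, and dispose of the trivial regime $N_0<pq$ separately. You simply spell out more explicitly the steps the paper compresses into ``it follows from the discussion in Section \ref{s:prelim}.''
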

\begin{proof}

It follows from the discussion in Section \ref{s:prelim} that $\varSigma= \varSigma(p,q,r)$ is weakly elliptic if and only if its difference function 
$\Delta_\varSigma$ alternates along its non-zero entries in the domain $[0,N_0]$. 
Interpreting in terms of the numerical semigroup $G_\varSigma= G(pq,pr,qr)$ of $\varSigma$, we see that 
if $pq < N_0 < pr$, $\Delta_\varSigma$ alternates with respect to $N_0$ 
if and only if $G_\varSigma$ alternates with respect to $N_0$.
On the other hand, if $0< N_0 < pq$, there is nothing to prove, because there are only two non-zero values of $\Delta_\varSigma$
in $[0,N_0]$ and these are $1$ and $-1$.
\end{proof}

\begin{proof}[Proof of Theorem \ref{T:complete list}]
($\Rightarrow$)
Let $\varSigma(p,q,r)$ be a weakly elliptic Brieskorn sphere. By Corollary \ref{C:first criterion}, we know that 
$pqr - pq -pr -qr < pr$. Dividing by $pqr$, we obtain 
\begin{align}\label{reduction to fraction}
1 - \frac{1}{r} - \frac{1}{q} - \frac{1}{p} < \frac{1}{q}.
\end{align}
Since $1 < p < q < r$, it follows that $1- 3/p< 1/q$, or $1 < 1/q+3/p$, which implies $1 < 4/p$. 
Thus, we conclude that $p <4$.

We proceed with the case $p=3$. Using (\ref{reduction to fraction}) we see that 
$2/3 - 1/r < 2/q$. Hence, if $r\geq 6$, then $2/3 - 1/6 \leq 2/3 -1/r < 2/q$. In other words, $1/2 < 2/q$, or $q < 4$, which
is a contradiction. Therefore, $r<6$, hence the only possibility is that $q=4$ and $r=5$.

Next, we look at the case when $p=2$. Then we have 
\begin{align}\label{reduction to fraction 2}
 \frac{1}{2} - \frac{1}{r} - \frac{1}{q} < \frac{1}{q}.
\end{align}
This inequality implies that $q< 6$. There are two possibilities, $q=3$ and $q=5$. In the former case, we are done, already.
For the latter, it follows from (\ref{reduction to fraction 2}) that $r <10$. Obviously, the only two possibilities are $r=7$ and $r=9$.

$(\Leftarrow)$ It follows from the definition of weakly elliptic Brieskorn spheres 
and Table \ref{tab:kappa} that $\varSigma(2,5,7)$, $\varSigma(2,5,9)$, $\varSigma(3,4,5)$, $\varSigma(2,3,5)$, 
$\varSigma(2,3,7)$, and $\varSigma(2,3,13)$ are 
weakly elliptic. Therefore,  it is enough to show that $\varSigma(2,3,r)$, $r> 13$ is weakly elliptic. 

Notice that any integer $r > 13$ that is relatively prime to $2$ and $3$ has the form $r= 6k \pm 1$ for some $k \geq 3$.
We proceed with the case that $r= 6k+1$. Then $N_0 = 6k - 5$. It follows that $6 < N_0 < 2(6k+1) < 3(6k+1)$, if $k\geq 3$. 
Therefore, by Corollary \ref{C:first criterion}. $\varSigma(2,3,6k+1)$ is weakly elliptic. 
In the next case that $r=6k-1$, we have $N_0 = 6k-7$. Similar to the previous case, $6 < N_0 < 2(6k-1)$, if $k \geq 3$.
Therefore, $\varSigma(2,3,6k-1)$ is weakly elliptic and the proof in the case of Brieskorn spheres is finished. 

Finally, for more than three singular fibers, we observe that the statement and the proof of Corollary \ref{C:first criterion} 
is valid if $N_0 < p_1p_3\cdots p_l$. However, an argument similar to ``if'' part of the proof of three singular fibers gives 
a contradiction to this inequality.

\end{proof}

\section{Generating Function of $\tau$}
\label{s:genfunc}

In this section we calculate the generating functions for the sequences $\tau (n)$ and $\Delta (n)$. 
Our main result shows that both generating functions are rational. 
For convenience we change our notation slightly. 
Let $\alpha = m /a = m_1/a_1$, $\beta= m_2/a_2$ and $\gamma= m_3/a_3$ be three rational numbers. 
Consider the integer valued function defined by the recurrence relation
\begin{align}\label{A:recurrence of tau}
\tau (n+1) 
= \tau (n) + 1 + |e_0|n - \left\lceil \frac{n}{\alpha}  \right\rceil - \left\lceil \frac{n}{\beta}  \right\rceil - \left\lceil \frac{n}{\gamma}  \right\rceil,
\end{align}
and the initial condition $\tau(0)=0$. 

\begin{Theorem}\label{T: First Theorem}
Let $(m_1,a_1),(m_2,a_2),(m_3,a_3) $ be three pairs of pairwise relatively prime positive integers, and 
let $\tau :\N \rightarrow \Z$ denote the $\tau$-function defined recursively as in  (\ref{A:recurrence of tau}).
Then its generating series $F(x) = \sum_{n\geq 1} \tau(n) x^n$ is given by 
$$
F(x) = \frac{x}{(1-x)^2} + \frac{|e_0|x^2}{(1-x)^3} - \frac{x^2}{(1-x)^2}\sum_{i=1}^3\frac{(1-x^{\lfloor m_i/a_i \rfloor a_i})}{(1-x^{m_i})(1-x^{\lfloor m_i/a_i \rfloor})},
$$
where $\lfloor y \rfloor$ denotes the \emph{floor function}.
\end{Theorem}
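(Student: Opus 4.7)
The plan is to convert the recurrence for $\tau$ into a generating-function identity and then to compute each ``ceiling'' contribution as a closed-form rational function. Since $\tau(0)=0$ and $\tau(n+1)-\tau(n)=\Delta(n):=1+|e_0|n-\sum_{i=1}^{3}\lceil na_i/m_i\rceil$, summation gives $\tau(n)=\sum_{k=0}^{n-1}\Delta(k)$, whence
$$F(x)=\frac{x}{1-x}\,\Delta(x),\qquad \Delta(x):=\sum_{k\geq 0}\Delta(k)\,x^k.$$
The polynomial part $1+|e_0|k$ of $\Delta(k)$ contributes $\frac{1}{1-x}+\frac{|e_0|x}{(1-x)^2}$ to $\Delta(x)$, and after multiplication by $x/(1-x)$ these become exactly the first two summands $\frac{x}{(1-x)^2}+\frac{|e_0|x^2}{(1-x)^3}$ of the target.

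The remaining task is to compute, for each coprime pair $(m,a)=(m_i,a_i)$, the series $G_{m,a}(x):=\sum_{n\geq 0}\lceil na/m\rceil\,x^n$. I would use a layer-cake decomposition, rewriting $\lceil na/m\rceil$ as $|\{j\geq 1:j\leq\lceil na/m\rceil\}|$ and swapping the two sums: since $j\leq\lceil na/m\rceil$ is equivalent to $n\geq\lfloor(j-1)m/a\rfloor+1$, this produces
$$G_{m,a}(x)=\frac{x}{1-x}\sum_{j\geq 0}x^{\lfloor jm/a\rfloor}.$$
Exploiting the quasi-periodicity $\lfloor(j+a)m/a\rfloor=\lfloor jm/a\rfloor+m$, the inner sum factors through one period as $\tfrac{1}{1-x^m}\sum_{r=0}^{a-1}x^{\lfloor rm/a\rfloor}$. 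Setting $\ell:=\lfloor m/a\rfloor$ and identifying the finite sum $\sum_{r=0}^{a-1}x^{\lfloor rm/a\rfloor}$ with $\frac{1-x^{\ell a}}{1-x^\ell}$ then yields
$$G_{m,a}(x)=\frac{x(1-x^{\ell a})}{(1-x)(1-x^m)(1-x^\ell)},$$
so that multiplying by $-x/(1-x)$ and summing over $i=1,2,3$ produces the remaining sum in the claimed formula.

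The hard part will be the polynomial identity $\sum_{r=0}^{a-1}x^{\lfloor rm/a\rfloor}=\frac{1-x^{\ell a}}{1-x^\ell}$. Writing $m=\ell a+s$ with $0\leq s<a$, this reduces to showing $\lfloor rs/a\rfloor=0$ for every $r\in\{0,\dots,a-1\}$, which is immediate when $s\in\{0,1\}$ but delicate in general. I would handle it by invoking the Brieskorn-type relation~(\ref{e:brieskorn}), which constrains the admissible pairs $(m_i,a_i)$ arising from a Seifert fibration, together with the bijection between the residues $rm\pmod a$ and $\{0,\dots,a-1\}$ afforded by $\gcd(a,m)=1$, to group the exponents $\lfloor rm/a\rfloor$ into the desired arithmetic progression $0,\ell,2\ell,\dots,(a-1)\ell$.
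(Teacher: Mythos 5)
Your plan is the same as the paper's: pass from the recurrence to $F(x)=\frac{x}{1-x}\sum_{k}\Delta(k)x^k$, peel off the polynomial part (which correctly yields the first two summands), and reduce everything to the single series $G_{m,a}(x)=\sum_{n}\lceil na/m\rceil x^n$. Your layer-cake computation and the quasi-periodicity $\lfloor (j+a)m/a\rfloor=\lfloor jm/a\rfloor+m$ are correct and give $G_{m,a}(x)=\frac{x}{(1-x)(1-x^m)}\sum_{r=0}^{a-1}x^{\lfloor rm/a\rfloor}$, which is exactly the numerator the paper obtains in Lemma \ref{L: en basit hali} via the first differences $\lceil (i+1)a/m\rceil-\lceil ia/m\rceil$. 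You have also correctly identified the crux: writing $m=\ell a+s$, the claimed collapse $\sum_{r=0}^{a-1}x^{\lfloor rm/a\rfloor}=\frac{1-x^{\ell a}}{1-x^{\ell}}$ is equivalent to $\lfloor rs/a\rfloor=0$ for all $0\le r\le a-1$ (necessity follows by comparing the sums of the exponents, since the $\lfloor rs/a\rfloor$ are nonnegative).

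The gap is that this condition holds only when $s\in\{0,1\}$, i.e.\ when $m\equiv 0$ or $1\pmod a$, and the Brieskorn relation (\ref{e:brieskorn}) imposes no such congruence on the pairs $(m_i,a_i)=(p_i,p_i')$, so the final step of your plan cannot be carried out. Concretely, $\varSigma(2,3,11)$ has $(r',r)=(9,11)$, so $(m,a)=(11,9)$, $\ell=1$, $s=2$: the exponent multiset is $\{0,1,2,3,4,6,7,8,9\}$, not $\{0,1,\dots,8\}$, so no regrouping into the arithmetic progression $0,\ell,\dots,(a-1)\ell$ exists; correspondingly the coefficient of $x^6$ in $\frac{x(1-x^{9})}{(1-x)^2(1-x^{11})}$ is $6$ while $\lceil 9\cdot 6/11\rceil=5$. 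You are in good company: the paper's own proof asserts without justification that $\lceil (i+1)a/m\rceil-\lceil ia/m\rceil=1$ exactly when $p\mid i$, which is the same false claim in another guise, so the stated formula fails already for data arising from actual Brieskorn spheres. The honest conclusion of your (and the paper's) computation is
$$
F(x) = \frac{x}{(1-x)^2} + \frac{|e_0|x^2}{(1-x)^3} - \frac{x^2}{(1-x)^2}\sum_{i=1}^3\frac{\sum_{r=0}^{a_i-1}x^{\lfloor rm_i/a_i\rfloor}}{1-x^{m_i}},
$$
which reduces to the stated form only under the additional hypothesis $m_i\equiv 0$ or $1\pmod{a_i}$ for each $i$.
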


Theorem  \ref{T: First Theorem} immediately implies Theorem \ref{C:simplified}. 
The proof of Theorem \ref{T: First Theorem} occupies the rest of this subsection. 
The main component of the proof is the identification of  the generating function 
$f(x) = \sum_{n \geq 0} \left\lceil \frac{n a}{m}  \right\rceil x^n$ 
with a simple rational function. We achieve this in two steps. 

\begin{Lemma}\label{L:prep1}
Let $D(x)$ denote the polynomial $D(x):= \sum_{i=1}^{m-1} \lf \frac{ia}{m} \rf x^i$. Then 
$$
f(x) = \frac{ax^m+ D(x) (1-x) }{(1-x) (1-x^m)}.
$$
\end{Lemma}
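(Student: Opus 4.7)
The plan is to exploit the quasi-periodicity of $\lceil na/m \rceil$ modulo $m$. Writing $n = qm + r$ with $q \geq 0$ and $0 \leq r \leq m-1$, the elementary identity
$$\left\lceil \frac{(qm+r)a}{m}\right\rceil = qa + \left\lceil \frac{ra}{m}\right\rceil$$
(which follows directly from the division algorithm, since $qa$ is an integer) splits the double sum cleanly.

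First I would rewrite
$$f(x) = \sum_{q \geq 0}\sum_{r=0}^{m-1}\left(qa + \left\lceil \frac{ra}{m}\right\rceil\right)x^{qm+r} = \left(a\sum_{r=0}^{m-1}x^r\right)\left(\sum_{q \geq 0}qx^{qm}\right) + \left(\sum_{r=0}^{m-1}\left\lceil \frac{ra}{m}\right\rceil x^r\right)\left(\sum_{q \geq 0}x^{qm}\right).$$
The $r=0$ term in the second inner sum vanishes since $\lceil 0\rceil = 0$, so that sum equals $D(x)$ exactly, and $\sum_{r=0}^{m-1}x^r = (1-x^m)/(1-x)$.

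Next I would apply the standard identities $\sum_{q \geq 0}x^{qm} = 1/(1-x^m)$ and $\sum_{q \geq 0}qx^{qm} = x^m/(1-x^m)^2$ to obtain
$$f(x) = \frac{a(1-x^m)}{1-x}\cdot\frac{x^m}{(1-x^m)^2} + \frac{D(x)}{1-x^m} = \frac{ax^m}{(1-x)(1-x^m)} + \frac{D(x)(1-x)}{(1-x)(1-x^m)},$$
and combining the two fractions over the common denominator $(1-x)(1-x^m)$ yields the claimed identity.

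There is no real obstacle here: the argument is purely bookkeeping after the residue decomposition. The only point worth flagging is that coprimality of $a$ and $m$ is \emph{not} needed for this particular lemma — the residue split works for arbitrary positive integers $a, m$ — so the proof is slightly more general than the statement suggests. Coprimality enters only later, when $f(x)$ is assembled into the generating function of $\tau(n)$ in Theorem \ref{T: First Theorem}.
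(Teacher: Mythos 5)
Your proof is correct and follows essentially the same route as the paper: decompose $f(x)$ by residue classes modulo $m$ via the identity $\lceil (qm+r)a/m\rceil = qa + \lceil ra/m\rceil$, evaluate the two resulting geometric-type sums, and recombine over the common denominator $(1-x)(1-x^m)$. Your side remark that coprimality of $a$ and $m$ is not needed here is accurate but does not change the argument.
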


\begin{proof}
To compute $f(x)$ in a closed form we break it into congruence classes modulo $m$ (without worrying about convergence issues):
\begin{align*}
f(x) &= 
\sum_{n \equiv 0 \mod m} \left\lceil \frac{n a}{m}  \right\rceil x^{n} + 
\sum_{n \equiv 1 \mod m} \left\lceil \frac{n a}{m}  \right\rceil x^{n}+ \cdots + 
\sum_{n \equiv m-1 \mod m} \left\lceil \frac{n a}{m}  \right\rceil x^{n},
\end{align*}
or
\begin{align}\label{A:expansion of f}
f(x) &= 
\sum_{l\geq 0} \left\lceil \frac{lm a}{m}  \right\rceil x^{lm} + 
\sum_{l \geq 0} \left\lceil \frac{(lm+1) a}{m}  \right\rceil x^{lm+1} + \cdots +
\sum_{l \geq 0} \left\lceil \frac{(lm+m-1) a}{m}  \right\rceil x^{lm+m-1}. 
\end{align}
Note that, for $i=1,\dots, m-1$ 
\begin{align*}
\sum_{l \geq 0} \left\lceil \frac{(lm+i) a}{m}  \right\rceil x^{lm+i} =
\sum_{l \geq 0} \left( la+ \left\lceil \frac{ia}{m} \right\rceil \right) x^{lm+i}.
\end{align*}
We separate the right hand side of (\ref{A:expansion of f}) into two summations; $f(x) = A(x) + B(x)$, where 
\begin{align*}
A(x) = \sum_{i=0}^{m-1} \sum_{l \geq 0} al x^{ml+i} \qquad \text{and}\qquad B(x) = \sum_{i=1}^{m-1} \sum_{l\geq 0} \left\lceil \frac{ia}{m} \right\rceil x^{ml+i}.
\end{align*}
It is easier to find a closed formula for $A(x)$; 
\begin{align*}
A(x) = \sum_{i=0}^{m-1} \sum_{l \geq 0} al x^{ml+i} &= \sum_{i=0}^{m-1} x^i \sum_{l \geq 0} al x^{ml} \\
&= \frac{1- x^m}{1-x} a \sum_{l \geq 0} l (x^m)^l = \frac{1- x^m}{1-x} a x^m \frac{1}{(1-x^m)^2}= \frac{ax^m}{(1-x) (1-x^m)}.
\end{align*}
For $B(x)$ we have 
\begin{align*}
B(x) =\sum_{i=1}^{m-1} \sum_{l\geq 0} \left\lceil \frac{ia}{m} \right\rceil x^{ml+i}  
= \left( \sum_{i=1}^{m-1} \lf \frac{ia}{m} \rf x^i \right)  \sum_{l\geq 0} x^{ml} 
= \left( \sum_{i=1}^{m-1} \lf \frac{ia}{m} \rf x^i \right) \frac{1}{1-x^m}.
\end{align*}
Thus, if we define $D(x)$ as in hypothesis, 
\begin{align*}
f(x) &= A(x) + B(x) 
= \frac{ax^m}{(1-x) (1-x^m)}+  D(x) \frac{1}{1-x^m} 
= \frac{ax^m+ D(x) (1-x) }{(1-x) (1-x^m)}.
\end{align*}

\end{proof}

\begin{Lemma}\label{L: en basit hali}
Let $m=pa+q$ with $0\leq q < a$ then $f(x)$  can be written as
$$f(x)=\frac{x(1-x^{pa})}{(1-x)(1-x^m)(1-x^p)}$$
\end{Lemma}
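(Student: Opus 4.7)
The plan is to reduce the claim to a polynomial identity and verify it via a telescoping computation. By Lemma~\ref{L:prep1}, we already have $f(x) = \dfrac{ax^m + D(x)(1-x)}{(1-x)(1-x^m)}$ with $D(x) = \sum_{i=1}^{m-1} \lceil ia/m \rceil\, x^i$, so clearing the common denominator reduces the lemma to the polynomial identity
$$
ax^m + D(x)(1-x) \;=\; \frac{x(1-x^{pa})}{1-x^p} \;=\; \sum_{k=0}^{a-1} x^{kp+1}.
$$

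For the first step, I would telescope: write $D(x)(1-x) = \sum_{i=1}^{m-1} \lceil ia/m\rceil (x^i - x^{i+1})$ and reindex. The boundary contributions simplify using $\lceil a/m\rceil = 1$ and $\lceil (m-1)a/m\rceil = a$ (valid because $\gcd(a,m)=1$ and $1 \leq a < m$), and combining with $ax^m$ cancels the degree-$m$ term, leaving $ax^m + D(x)(1-x) = x + \sum_{i=2}^{m-1} c_i\, x^i$ with $c_i := \lceil ia/m\rceil - \lceil (i-1)a/m\rceil \in \{0,1\}$. The $1$'s correspond exactly to the jumps of $i \mapsto \lceil ia/m\rceil$, so
$$
ax^m + D(x)(1-x) \;=\; \sum_{k=0}^{a-1} x^{\lfloor km/a\rfloor + 1}.
$$

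For the second step, I would apply the Euclidean decomposition $m = pa + q$ with $0 \leq q < a$, which gives $\lfloor km/a\rfloor = kp + \lfloor kq/a\rfloor$. Once $\lfloor kq/a\rfloor = 0$ for each $k \in \{0, 1, \ldots, a-1\}$, the jump positions collapse into the arithmetic progression $\{kp + 1\}$, and the geometric series $\sum_{k=0}^{a-1} x^{kp+1} = x(1-x^{pa})/(1-x^p)$ closes the identity.

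The main obstacle lies precisely in verifying the vanishing $\lfloor kq/a \rfloor = 0$: strictly this requires $(a-1)q < a$, which is a genuine strengthening of the stated hypothesis $0 \leq q < a$ once $a \geq 2$ and $q \geq 2$. For the pairs $(a,m) = (p_i', p_i)$ coming from the Seifert invariants of Brieskorn spheres, I would either check this stronger arithmetic condition directly in the cases of interest (in practice $q \in \{0,1\}$), or track the residues $\lfloor kq/a\rfloor$ more carefully so that the multiset of exponents on the left still rearranges into the factored form $x(1-x^{pa})/(1-x^p)$ after all cancellations.
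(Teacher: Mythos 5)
Your telescoping computation is correct and follows essentially the same route as the paper's own proof: starting from Lemma~\ref{L:prep1}, one gets $ax^m + D(x)(1-x) = \sum_{i=0}^{m-1} c_i x^{i+1}$ with $c_i = \lceil (i+1)a/m\rceil - \lceil ia/m\rceil \in \{0,1\}$, and the indices with $c_i=1$ are exactly $i=\lfloor km/a\rfloor$ for $k=0,\dots,a-1$. The obstacle you isolate is not a defect of your argument but a genuine error in the lemma as stated and in the paper's proof of it: the paper asserts without justification that $c_i=1$ iff $i\equiv 0 \pmod p$, which is precisely the claim $\lfloor km/a\rfloor = kp$, i.e.\ $\lfloor kq/a\rfloor =0$ for all $0\le k\le a-1$, and as you note this forces $(a-1)q<a$, hence $q\le 1$ once $a\ge 2$. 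A concrete counterexample is $(a,m)=(3,8)$, so $p=q=2$: the jump set is $\{0,2,5\}$ rather than $\{0,2,4\}$, the numerator is $x+x^3+x^6\ne x(1-x^6)/(1-x^2)$, and the coefficient of $x^5$ in the claimed rational function is $3$ while $\lceil 15/8\rceil =2$.

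Regarding your two proposed repairs: the second cannot succeed, because the multiset of exponents $\{\lfloor km/a\rfloor+1\}_{k=0}^{a-1}$ is determined by the polynomial and genuinely differs from $\{kp+1\}_{k=0}^{a-1}$ whenever some $\lfloor kq/a\rfloor>0$; no rearrangement or cancellation restores the factored form. The first is also not available in the intended application, since pairs with $q\ge 2$ do occur among Seifert invariants --- e.g.\ $(a,m)=(p',p)=(9,11)$ for $\varSigma(2,3,11)$ gives $11=1\cdot 9+2$, so $p=1$, $q=2$, and $(a-1)q=16>9$. The correct general statement, which your first step already establishes, is
$f(x)=\bigl(\sum_{k=0}^{a-1}x^{\lfloor km/a\rfloor+1}\bigr)/\bigl((1-x)(1-x^m)\bigr)$;
the factored form claimed in the lemma (and propagated into Theorem~\ref{T: First Theorem}) is valid only when $a=1$ or $q\le 1$.
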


\begin{proof}

From Lemma \ref{L:prep1},
$$
f(x)=(\sum_{i=0}^{m-1}c_ix^{i+1}) \frac{1}{(1-x)(1-x^m)},
$$
where
\begin{equation*}
c_i=\left \lceil \frac{(i+1)a}{m} \right \rceil -\left \lceil \frac{ia}{m} \right \rceil = 
\left \{
\begin{array}{ll}
1 & \mathrm{if}\;i\equiv 0\; (\mathrm{mod}\;p)\\
0 & \mathrm{otherwise} \\
\end{array} 
\right .
\end{equation*}
Therefore
\begin{align*}
f(x)=\frac{\sum_{j=1}^{a-1}x^{jp+1}}{(1-x)(1-x^m)}
=\frac{x\sum_{j=1}^{a-1}(x^{p})^j}{(1-x)(1-x^m)}
=\frac{x(1-x^{pa})}{(1-x)(1-x^m)(1-x^p)}.
\end{align*}
\end{proof}

\vspace{1cm}

\begin{proof}[Proof of Theorem \ref{T: First Theorem}]
Let $F(x)$ and $f_i(x)$ for $i=1,2,3$ denote the generating functions of $\tau(n)$ and $\left\lceil \frac{n a_i}{m_i}  \right\rceil $, respectively. 
If we multiply both sides of the equation
\begin{align*}
\tau (n+1) = \tau (n) + 1 + |e_0|n - \left\lceil \frac{n}{\alpha}  \right\rceil - \left\lceil \frac{n}{\beta}  \right\rceil - \left\lceil \frac{n}{\gamma}  \right\rceil
\end{align*}
by $x^{n+1}$ and sum over $n\geq 0$, then we obtain 
$$
F(x)  =  x F(x) + \sum_{n\geq 0} x^{n+1} +\sum_{n\geq 0} |e_0|n x^{n+1} - x (f_1(x) + f_2(x) + f_3(x) ).
$$
Equivalently, 
$$
F(x) = \frac{1}{1-x} \left( \frac{x}{1-x} + \frac{|e_0|x^2}{(1-x)^2}  - x (f_1(x) + f_2(x) + f_3(x))  \right).
$$
Therefore, the result follows from Lemma \ref{L: en basit hali}.
\end{proof}

\subsection{Closed form of $\tau (n)$}

In this subsection we use the generating function given in Theorem \ref{T: First Theorem} to find $\tau$ explicitly. 
See \cite{BN} for an alternative formula in terms of Dedekind sums.

\begin{Theorem}\label{t:explicittau}
The unique solution to the recurrence defined in (\ref{A:recurrence of tau}) is given by
\begin{eqnarray*}
\tau (n)=n&+&|e_0|\left ( \frac{n(n-1)}{2} \right )\\
&+&\sum _{i=1}^3\sum _{k=0}^{a_i-1}\left (-(n- \lfloor m_i/a_i \rfloor k-1)+\frac{m_i}{2}\left \lfloor \frac{n- \lfloor m_i/a_i \rfloor k-1}{m_i} \right \rfloor \right )  \left ( \left \lfloor \frac{n-\lfloor m_i/a_i \rfloor k-1}{m_i} \right \rfloor + 1 \right ).
\end{eqnarray*}
\end{Theorem}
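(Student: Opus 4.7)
The strategy is to extract $[x^n]F(x)$ directly from the rational closed form of $F(x)$ given in Theorem~\ref{T: First Theorem}, handling the three summands one at a time. For the first two, I would use the elementary expansions $\frac{x}{(1-x)^2}=\sum_{n\ge 1}nx^n$ and $\frac{x^2}{(1-x)^3}=\sum_{n\ge 2}\binom{n}{2}x^n$, which yield the contributions $n$ and $|e_0|\binom{n}{2}=|e_0|\tfrac{n(n-1)}{2}$. These already account for the first two terms in the asserted closed form for $\tau(n)$.

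For the third summand I would first apply the finite geometric series identity
$$
\frac{1-x^{p_i a_i}}{1-x^{p_i}}=\sum_{k=0}^{a_i-1}x^{kp_i},\qquad \text{where}\ p_i:=\left\lfloor \frac{m_i}{a_i} \right\rfloor,
$$
to rewrite it as $-\sum_{i=1}^3 \sum_{k=0}^{a_i-1}\dfrac{x^{kp_i+2}}{(1-x)^2(1-x^{m_i})}$. An index shift reduces the task to computing the coefficient of $x^{n-kp_i}$ in $\dfrac{x^2}{(1-x)^2(1-x^{m_i})}$. Combining the expansion $[x^L]\dfrac{x^2}{(1-x)^2}=L-1$ (valid for $L\ge 1$, and zero otherwise) with $\dfrac{1}{1-x^{m_i}}=\sum_{j\ge 0}x^{jm_i}$ and convolving yields the arithmetic progression sum
$$
\sum_{j=0}^{M}\bigl(n-kp_i-jm_i-1\bigr),\qquad M:=\left\lfloor\frac{n-kp_i-1}{m_i}\right\rfloor,
$$
which evaluates to $(M+1)\bigl(n-kp_i-1-\tfrac{m_i}{2}M\bigr)$. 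Introducing the overall minus sign and regrouping gives exactly the summand appearing in the statement of the theorem.

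Assembling the three contributions then yields the claimed closed form. Uniqueness is automatic: the recurrence (\ref{A:recurrence of tau}) together with $\tau(0)=0$ determines $\tau$ completely on $\mathbb{N}$, and one can check directly that the proposed closed form vanishes at $n=0$ (each of the three blocks contributes zero).

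The only point requiring care is the choice of upper limit in the summation, since the form of the floor function in the final answer must match the statement of the theorem. A direct convolution using $L\ge 2$ produces the bound $\lfloor(n-kp_i-2)/m_i\rfloor$, which differs from $M=\lfloor(n-kp_i-1)/m_i\rfloor$ precisely when $m_i\mid(n-kp_i-1)$; in that exceptional case, the extra $j=M$ term of the progression equals zero, so the two sums agree numerically, which justifies passing to the cleaner upper limit $M$. This is the only spot where any attention is needed; the rest is routine coefficient extraction from a rational generating function.
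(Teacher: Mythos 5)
Your proposal is correct and follows essentially the same route as the paper: both extract $[x^n]F(x)$ from the rational form in Theorem~\ref{T: First Theorem}, the only cosmetic difference being that the paper factors out $\frac{x}{(1-x)^2}$ once and applies a single convolution lemma (Lemma~\ref{L: carpma}) to the combined remaining series, while you expand the three summands separately. Your handling of the boundary of the inner sum (passing from $\lfloor (n-kp_i-2)/m_i\rfloor$ to $\lfloor (n-kp_i-1)/m_i\rfloor$ because the extra term vanishes) is the right observation and matches what the paper's convolution implicitly does.
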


Before starting the proof we first we state a useful lemma whose proof is omitted. 

\begin{Lemma}\label{L: carpma}
If $g(x)=\sum_{n=0}^\infty c_nx^n$, then 
$\displaystyle{\frac{x}{(1-x)^2}g(x)=\sum_{n=0}^\infty \left (\sum_{j=0}^n(n-j)c_j \right )x^n}.$

\end{Lemma}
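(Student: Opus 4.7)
The plan is to prove this by identifying $\frac{x}{(1-x)^2}$ with a standard generating function and then applying the Cauchy product formula for formal power series. There is nothing substantive to overcome here; the main task is simply to track the indexing carefully.

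First I would recall the identity
$$
\frac{1}{(1-x)^2} \;=\; \frac{d}{dx}\frac{1}{1-x} \;=\; \sum_{n=0}^\infty (n+1) x^n,
$$
obtained by differentiating the geometric series term by term. Multiplying through by $x$ gives $\frac{x}{(1-x)^2} = \sum_{n=0}^\infty (n+1) x^{n+1} = \sum_{n=1}^\infty n x^n$, and since the $n=0$ term contributes $0$, we may equivalently write this as $\sum_{n=0}^\infty n x^n$. Thus $\frac{x}{(1-x)^2}$ has the coefficient sequence $a_n = n$.

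Next I would invoke the Cauchy product: for formal power series $A(x) = \sum a_n x^n$ and $B(x) = \sum b_n x^n$, the product satisfies $A(x) B(x) = \sum_{n=0}^\infty \left(\sum_{j=0}^n a_{n-j} b_j\right) x^n$. Applying this with $a_{n-j} = n-j$ and $b_j = c_j$ yields
$$
\frac{x}{(1-x)^2} g(x) \;=\; \sum_{n=0}^\infty \left(\sum_{j=0}^n (n-j) c_j\right) x^n,
$$
which is exactly the claimed identity. The only step requiring a small sanity check is the boundary term $j=n$, whose contribution $(n-n)c_n = 0$ is consistent with the fact that the $n=0$ summand of $\sum_{n=0}^\infty n x^n$ vanishes, so no off-by-one issue arises. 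As this is a formal identity in $\Z[[x]]$, no convergence argument is needed, which is why the lemma can be applied freely to the generating functions arising in Theorem \ref{T: First Theorem}.
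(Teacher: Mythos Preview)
Your proof is correct. The paper omits the proof of this lemma entirely (it states ``whose proof is omitted''), so there is nothing to compare against; your argument via the Cauchy product with $\frac{x}{(1-x)^2}=\sum_{n\geq 0} n x^n$ is the standard one and is exactly what is needed.
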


\begin{proof}[Proof of Theorem \ref{t:explicittau}]
Let $p_i:=\lfloor m_i/a_i \rfloor$ for all $i=1,2,3$. By Theorem \ref{T: First Theorem},
\begin{eqnarray*}
\sum_{n=0}^\infty \tau (n) x^n &=& \frac{x}{(1-x)^2} \left ( 1+\frac{|e_0|x}{1-x} - \sum_{i=1}^3 \sum_{k=0}^{a_i-1}\frac{x^{kp_i+1}}{1-x^{m_i}}\right )\\
&=& \frac{x}{(1-x)^2} \left ( 1+\sum_{n=1}^\infty |e_0| x^n - \sum_{i=1}^3 \sum_{k=0}^{a_i-1}\sum_{n=0}^\infty x^{m_in+kp_i+1}\right )\\
&=& \frac{x}{(1-x)^2} \left ( \sum_{n=0}^\infty \xi (n) x^n - \sum_{n=0}^\infty\sum_{i=1}^3 \sum_{k=0}^{a_i-1}\epsilon_{m_i}(n-kp_i-1)x^n\right )\\
&=& \frac{x}{(1-x)^2} \left ( \sum_{n=0}^\infty c_n x^n \right ),
\end{eqnarray*}
where $c(n)=\xi (n)-\sum_{i=1}^3 \sum_{k=0}^{a_i-1}\epsilon_{m_i}(n-kp_i-1)$, and

$$
\epsilon _m(j)=\left \{
\begin{array}{ll}
1 & \mathrm{if}\;j\equiv 0\; (\mathrm{mod}\;m)\\
0 & \mathrm{otherwise} \\
\end{array} 
\right .\;\;\;\;
\xi (n)=\left \{
\begin{array}{ll}
0& \mathrm{if}\;n=0\\
|e_0| & \mathrm{otherwise} \\
\end{array} 
\right .
$$
By Lemma \ref{L: carpma},
\begin{eqnarray*}
\tau (n)&=&\sum_{j=0}^n (n-j) c_j\\
&=&\sum_{j=0}^n(n-j)\xi(j)-\sum_{i=1}^3\sum_{k=0}^{a_i-1}\sum_{j=0}^n (n-j) \epsilon_{m_i}(j-kp_i-1)\\
&=& n + \sum _{j=1}^n (n-j)|e_0|-\sum_{i=1}^3\sum_{k=0}^{a_i-1}\sum_{j=0}^{ \lfloor (n-kp_i-1)/m_i \rfloor} n-(m_ij+kp_i+1)\\
&=& n + |e_0|n^2- \frac{|e_0|n(n+1)}{2}-\sum_{i=1}^3\sum_{k=0}^{a_i-1} \left ( (n-kp_i-1)-\frac{m_i}{2}\left ( \left \lfloor \frac{n-kp_i-1}{m_i}\right \rfloor \right )\right )\left ( \left \lfloor \frac{n-kp_i-1}{m_i}\right \rfloor +1 \right ).
\end{eqnarray*}
Hence the proof is complete.
\end{proof}

\vspace{1cm}

\noindent \textbf{Acknowledgements.}
This collaboration has begun during 2012 G\"okova Geometry-Topology Conference. 
Authors would like thank the organizers for providing such a stimulating atmosphere.
Authors are grateful to Tye Lidman for helpful discussions. 
The second author is supported by the National Science Foundation Grant DMS-1065178, and by a Simons fellowship.


\bibliography{References}
\bibliographystyle{plain}

\end{document}